\definecolor{bleuf}{rgb}{0.1,0.1,0.6}
\renewcommand{\section}{\@startsection{section}{1}{0mm}%
{\baselineskip}{\baselineskip}%
{\large\bfseries\centering}}
\renewcommand{\subsection}{\@startsection{subsection}{2}{0mm}%
{0.5\baselineskip}{0.2\baselineskip}%
{\large\bfseries}}
\renewcommand{\subsubsection}{\@startsection{subsubsection}{1}{0mm}%
{0.4\baselineskip}{0.1\baselineskip}%
{\slshape\bfseries}}
\renewcommand{\subsubsection}{\@startsection{subsubsection}{1}{0mm}%
{0.4\baselineskip}{0.1\baselineskip}%
{\slshape\bfseries}}
\DeclareMathOperator{\NN}{\mathbb{N}}
\DeclareMathOperator{\RR}{\mathbb{R}}
\DeclareMathOperator{\Pd}{\mathbb{P}}
\DeclareMathOperator{\Ed}{\mathbb{E}}
\DeclareMathOperator{\un}{1\hspace{-.29em}I}
\newcommand{\intervalle }[4]{\mathopen{#1}#2
\mathclose{}\mathpunct{};#3
\mathclose{#4}}
\newcommand\enu[1]{{\intervalle\llbracket {1}{#1}\rrbracket}}
\newcommand\KN[1][n]{\llbracket{#1}\rrbracket}
\newcommand\DL{\mathcal{DL}}
\newcommand\NL{\mathcal{DL}}
\newcommand\Vois{\mathcal{N}}
\newcommand\ER{\mathcal{H}}
\newcommand\mup{\dot{\mu}}
\newcommand\Pois{\text{Poisson}}
\newcommand\CPois{\text{CPois}}
\newcommand\Geo{\mathcal{G}_{\NN^{*}}(\frac{\epsilon}{\epsilon+1})}
\DeclareMathOperator{\et}{ and }
\DeclareMathOperator{\TV}{d_{\text{TV}}}
\DeclareMathOperator{\var}{Var}
\newcommand\inter{\;\text{intersects}\;}
\renewcommand{\epsilon}{\varepsilon}
\newcommand\iid{i.i.d.\ }
\newtheorem{thm}{Theorem}[section]
\newtheorem{lem}{Lemma}[section]
\newtheorem{prop}{Proposition}[section]
\newtheorem{corol}{Corollary}[section]
\newtheorem*{bibli}{Theorem}
\theoremstyle{definition}
\newtheorem{ex}{Example}
\newtheorem{rem}{Remark}[section]
\title{Markovian loop clusters on the complete graph\\ and coagulation equations}
\date{June 2014}
\author{S. Lemaire\thanks{Universit\'e Paris-Sud, Laboratoire de Math\'ematiques, UMR 8628, Orsay
{F-91405}; CNRS, Orsay, {F-91405}; email: \url{sophie.lemaire@math.u-psud.fr}}}
\begin{document}

\maketitle
\begin{abstract}
Poissonian ensembles of Markov loops on a finite graph define a random
graph process in which the addition of a loop can merge  more than  two
connected components. We study Markov loops on the complete graph derived from  
a simple random walk  killed at each step with a constant probability.
Using a component exploration procedure,  we describe  the
asymptotic distribution of the  connected component size of a vertex at a
time proportional to the number of vertices, show that the largest component size 
undergoes a phase transition and establish the coagulation equations associated to 
this random graph process. 
\end{abstract}\medskip
\textit{AMS 2000 Subject Classification.} Primary 60C05. Secondary 05C80, 60J80, 82C31\medskip\\
\textit{Key words.} Poisson point process of loops, random graph process, branching processes,  
coalescent process, coagulation equations. 
\newpage
\section*{Introduction}
We consider the Poissonian ensembles of Markov loops on the complete graph $K_n$
  derived from  a simple random walk  killed at a constant rate
$\kappa_n=n\epsilon$ (loops can be viewed as excursions of the simple random walk
with a random starting point, up to re-rooting). Poissonian ensembles are seen as a Poisson point process
of loops indexed by `time'. The edges crossed by loops before  time $t$ define
a subgraph $\mathcal{G}^{(n)}_{t}$ of $K_n$. The evolution of the connected
components of $\mathcal{G}^{(n)}_{t}$ also named \emph{loop clusters}, defines a
 coalescent process with multiple mergers: the addition of a loop of length $k$ can merge up to $k$ clusters  at the same time. 

The notion of Poissonian ensembles of Markov loops (loop soups) was introduced
by Lawler and Werner in \cite{lw}  in the context of two
dimensional  Brownian motion (it already appeared informally 
in \cite{sym}). Loop clusters induced by a Brownian loop soup were used to give a
construction of  conformal loop ensembles (CLE) in \cite{WernerCRAS03} and
\cite{shw}. Some general
properties of  loop clusters on  finite and countable graphs were
presented in \cite{LeJanLemaire13} 
(see  \cite{stfl},  \cite{lawlerlimic}  and \cite{Sznitman} for  studies of 
other aspects of  Markov loops on graphs). 

The random graph process $(\mathcal{G}^{(n)}_{t})_{t\geq 0}$ depends on the `killing parameter'  $\epsilon$.  
For large values of $n$ and $\epsilon$,  the proportion of 
loops of length 2 is near 1, hence we can expect that $(\mathcal{G}^{(n)}_{t})_{t\geq 0}$ behaves like  the  random graph process introduced 
by Erd\"os and R\'enyi in \cite{ErdosRenyi59}.  In  \cite{LeJanLemaire13}, the asymptotic distribution  of the cover time 
(the first time when  $\mathcal{G}^{(n)}_{t}$ has no isolated vertex) and the  coalescence time 
(the first time when  $\mathcal{G}^{(n)}_{t}$ is connected) 
were studied   showing in particular that $\epsilon(\epsilon+1)n\log(n)$ is a
sharp threshold function for connectivity of the random graph process
$(\mathcal{G}^{(n)}_{t})_{t\geq 0}$. In comparison, the threshold function 
for the connectivity of the Erd\"os-R\'enyi random graph with $n$ vertices is $\frac{1}{2}n\log(n)$ (see \cite{ErdosRenyi59}\footnote{More precisely, if  $G(n,N)$ denotes a random graph obtained by forming $N$ links between $n$ labelled vertices, each of the  $\displaystyle{\binom{N}{\binom{n}{2}}}$ graphs being equally likely, then the probability that   ${G(n,\lfloor\frac{n}{2}(\log(n)+c)\rfloor)}$ is a connected graph  converges to 
$\displaystyle{\exp(-e^{-c})}$ as $n$ tends to $+\infty$ for every $c\in\RR$.}).
In this paper, we  study the  (connected) component sizes of $\mathcal{G}^{(n)}_t$ and the hydrodynamic behavior 
of the associated coalescent process. 

Let us review some properties related to our study in the case of
 the  Erd\"os-R\'enyi random graph. We follow the  presentation given by J. Bertoin in \cite{Bertoinbookfrag}, chapter 5. To make the link with the multiplicative coalescent simpler, we consider a variant of the Erd\"os-R\'enyi random graph  constructed as follows: 
let $\{X_{\{x,y\}},\ x\neq y\}$ be a family of independent random variables  indexed by the 
edges of $K_n$ with Exponential(1)-distribution. We define an increasing family of random graphs
denoted $(\ER(n,t))_{t\geq 0}$ by setting: 
$e$ is an edge of $\ER(n,t)$ if and only if $X_e\leq t$. \\
Let $c^{(n)}_t(x)$ denote the component size of a vertex $x$  of  $\ER(n,\frac{t}{n})$ and let 
$c^{(n)}_{1,t}\geq c^{(n)}_{2,t}$ denote the two largest component sizes. 
\paragraph{Phase transition} 
\begin{enumerate}
\item Assume that $t<1$. 
\begin{itemize}
\item For every vertex $x$, $c^{(n)}_t(x)$ converges in distribution to the total population 
size of a Galton-Watson process with one
progenitor and \Pois($t$) offspring distribution. 
\item  Let $I_{t}$ be the value at 1 of the Cram\'er
function of the \Pois$(t)$-distribution:
$I_{t}=t-1-\log(t)$.  \\
\emph{For every $a>I^{-1}_{t}$,  $\Pd(c^{(n)}_{1,t} \geq
a\log(n))$ converges to $0$ as $n$ tends to $+\infty$}. 
\end{itemize}
\item Assume that $t>1$ and denote by $q_t$ the extinction probability of a Galton-Watson process with one
progenitor and \Pois($t$) offspring distribution.\\
\emph{For every $a\in]1/2,1[$, there exist $b>0$ and $c>0$ such that 
$$\Pd[|c^{(n)}_{1,t}-(1-q_t)n|\geq n^a]+\Pd[c^{(n)}_{2,t}\geq c\log(n)]=O(n^{-b}).$$} 
\end{enumerate}
This phase transition was first proved by Erd\"os and R\'enyi in \cite{ErdosRenyi60}. The statements
we present are taken from \cite{bookvanderHofstad} where proofs are based on the use of 
branching processes. 
\paragraph{Coalescent process.} 
The component sizes of $\ER(n,\frac{t}{n})$ evolve as a  
multiplicative coalescent process with binary aggregations: aggregations  
of more than two components at one time do not occur and the aggregation rate 
of two components is proportional to the product of their sizes. 
\begin{itemize}
\item For any  $t>0$ and $x\in \NN^*$, the average number of components of  size $x$ 
in $\ER(n,\frac{t}{n})$ converges in $L^2$ to 
$$n(t,x)= \frac{(tx)^{x-1}e^{-tx}}{x.x!}\ \forall x\in\NN^* \text{ and }t\in[0,1].$$ 
The value $xn(x,t)$ is equal to the probability that $x$ is the total population size of  
a Galton-Watson process with
one progenitor and \Pois($t$) offspring distribution\footnote{For $t\leq 1$,  $\{xn(x,t), x\in\NN^*\}$ 
is a probability distribution called Borel-Tanner distribution with parameter $t$}.
\item $\{n(x,\cdot),\ x\in \NN^*\}$ is  solution on $\RR_+$ of the Flory's coagulation equations with 
multiplicative kernel:
\begin{multline}
\label{Floryeq}
\frac{d}{dt}n(x,t)=\frac{1}{2}\sum_{y=1}^{x-1}y(x-y)n(y,t)n(x-y,t)\\
-\sum_{y=1}^{+\infty}xyn(t,x)n(t,y)- xn(t,x)\sum_{y=1}^{+\infty}y(n(0,y)-n(t,y))
\end{multline}
Up to time 1, this solution coincides with the solution of the 
Smoluchowski's coagulation equations with multiplicative kernel starting from  the monodisperse state:  
\begin{equation}
\label{Smoluchowskieq}
\frac{d}{dt}n(x,t)=\frac{1}{2}\sum_{y=1}^{x-1}y(x-y)n(y,t)n(x-y,t)-xn(t,x)\sum_{y=1}^{+\infty}yn(t,y).
\end{equation}
Equations \eqref{Smoluchowskieq} introduced by Smoluchowski in \cite{Smolu16} are used for example 
to describe aggregations of polymers in an homogeneous medium where diffusion effects are ignored.  
The first term in the right-hand side  describes the formation of a  particle of mass $x$ by
aggregation of two particles, the second sum describes the ways  a particle of
mass $x$ can  be aggregated with  another particle. If the total mass of particles decreases after 
a finite time, the system is said to  exhibit a `phase transition' called `gelation': the loss of 
mass is interpreted as the formation of infinite mass particles called  gel. Smoluchowski's equations 
do not take into account interactions between gel and finite mass particles.  
Equations \eqref{Floryeq} introduced by Flory in \cite{Flory} are a modified version of the 
Smoluchowski's equations with   an extra term describing the loss of a particle of mass $x$ by 
`absorption' in the gel. 
Let $T_{gel}$ denote the largest time such that the Smoluchowski's coagulation
equations with monodisperse initial condition has a solution which has the
mass-conserving property\footnote{Different definitions of the `gelation time' $T_{gel}$ are used 
in the literature:  the gelation time is sometimes defined as the smallest time when the second 
moment diverges (see \cite{Aldousreview})}. Then $T_{gel}=1$ and $T_{gel}$ coincides with the 
smallest time when  the second moment $\sum_{x=1}^{+\infty}x^2n(x,t)$ diverges (see \cite{McLeod}).
Let us note that the random graph process $(\ER(n;\frac{t}{n}))_{t\geq 0}$ is equivalent to the 
microscopic model  introduced by Marcus \cite{Marcus} and further studied by Lushnikov \cite{Lushnikov} 
(see \cite{BuffetPule} for a first study of the relationship between  these two models and 
\cite{Aldousreview} for a review, \cite{Norris99}, \cite{Norris00} and \cite{FournierGiet} 
for convergence results of  Marcus-Lushnikov's model to \eqref{Floryeq}). 
\end{itemize}
 
The aim of this paper is to show that similar statements hold for the loop 
model  if the \Pois$(t)$-distribution is replaced  by the compound Poisson distribution 
with probability-genera\-ting function $s\mapsto \exp(\frac{t(1-s)}{\epsilon(\epsilon+1-s)})$ 
(i.e. the distribution of a \Pois($\frac{t}{\epsilon(\epsilon+1)}$)-distributed number of 
independent  random variables with geometric distribution on $\NN^*$ of  parameter 
$\frac{\epsilon}{\epsilon+1}$). Phase transition in the loop model occurs at $t=\epsilon^2$. 
Hydrodynamic behavior of the coalescent process 
associated to the loop model is described by new coagulation equations in which 
more than two particles can collide at the same time. 

Section \ref{sect:model} is devoted to a presentation of the Markov loop model on the 
complete graph and the statement of the main results (Theorem \ref{thmsub}, Theorem \ref{thmtransition} and 
Proposition \ref{prop:hydrodyn}). 
In section \ref{sect:exploration}, we describe the component exploration process used to compute 
the component size of a vertex and to  construct the associated Galton-Watson process.  
The asymptotic distribution of the component size of a vertex  is studied in Section \ref{sect:proofth} and 
the proof of Theorem \ref{thmsub} is presented.  
In Section \ref{sect:transition}, we prove Theorem \ref{thmtransition} which presents some properties of 
the largest component size in the two phases, $t <\epsilon^2$
and $t>\epsilon$. 
Section \ref{sect:coageq} is devoted to the proof of Proposition \ref{prop:hydrodyn} that 
describes the hydrodynamic behaviour of the coalescent process. 
\section{Description of the model and main results\label{sect:model}}
\subsection{Setting}
 We consider the complete graph $K_n$ with $n$ vertices. The set of vertices is
identified with $\KN=\{1,\ldots,n\}$. We add to each vertex $x$ a self-loop $\{x,x\}$. This
defines an undirected graph denoted $\bar{K}_n$. We consider the loop sets induced by a simple random walk on
$\bar{K}_n$ killed at each step with probability $\frac{\epsilon}{\epsilon+1}$
with $\epsilon>0$. In other words, the graph $\bar{K}_n$ is endowed with  unit
conductances and a uniform killing measure with intensity $\kappa_n=n\epsilon $.
The transition matrix $P$ of the random walk is defined by
$P_{x,y}=\frac{1}{n(\epsilon+1)}$ for every $x,y\in\KN$. 

A discrete based loop
$\ell$ of length $k\in\NN^*$ on $\bar{K}_n$ is defined as an element of
$\KN^k$.
A discrete loop is an equivalent class of based loops for the following
equivalent relation:  the based loop of length $k$, $(x_1, \ldots, x_k)$ is
equivalent to the based loop of length $k$
$(x_i,\ldots,x_{k},x_1,\ldots,x_{i-1})$ for every $i\in\{2,\ldots,
k\}$.  We associate to each based loop $\ell=(x_1,\ldots,x_k)$ of length $k\geq
2$ the weight $\mup(\ell)=\frac{1}{k}P_{x_1,x_2}P_{x_2,x_3}\ldots
P_{x_k,x_1}=\frac{1}{k(n(\epsilon+1))^k}$. This defines a measure $\mup$ on the
set of discrete based loops of length at least 2,  which is invariant by the
shift and therefore  induces a measure $\mu$ on the set of discrete loops of
length at least 2 denoted by $\DL(\KN)$. 
The Poisson loop sets on $\bar{K}_n$ is defined as a Poisson point process
$\mathcal{DP}$ with intensity $\text{Leb}\times\mu$  on $\RR_+\otimes \DL(\KN)$.
For $t>0$, we denote by  $\DL^{(n)}_{t}$ the projection of the set 
$\mathcal{DP}\cap ([0,t]\times \DL(\KN))$. The loop set $\DL^{(n)}_{t}$ on $\bar{K}_n$
defines a subgraph denoted by $\mathcal{G}^{(n)}_t$. 

Let $C^{(n)}_{t}(x)$ denote
the connected component of the vertex $x$ in the random graph
$\mathcal{G}^{(n)}_t$.  The aim of the paper is to study the size of
$C^{(n)}_{nt}(x)$ as $n$ tends to $+\infty$. 
The loop sets defined by this model are slighly different from the loop sets on
the complete graph studied in \cite{LeJanLemaire13} since we add a self-loop
$\{x,x\}$ at each vertex $x$. But the partitions induced by the loop clusters on
$\bar{K_n}$ or $K_n$ have the same distribution recalled in the following
proposition: 
\begin{prop}
\label{semigroupC}
For every $t>0$, let $\mathcal{C}_t$ denote the partition induced by the
connected components of   $\mathcal{G}^{(n)}_t$ and let $\tilde{\mathcal{C}}_t$
denote the partition induced by the loop sets on $K_n$ associated to the
transition matrix $\tilde{P}$ defined by:
$\tilde{P}_{x,y}=\frac{1}{n-1+n\epsilon}\un_{\{x\neq y\}}$ for every $x,
y\in\KN$.   The Markov processes $(\mathcal{C}_t)_t$ and
$(\tilde{\mathcal{C}}_t)_t$ have the same distribution: 
\begin{itemize}
 \item 
 If  $\pi$ is a partition of the set of vertices $\KN$ with $k$ blocks
$B_1,\ldots,B_k$ then the probability that $\mathcal{C}_{t}$ is finer than $
\pi$ is $$\Pd_{\pi_{0}}(\mathcal{C}_{t}\preceq
\pi)=(\frac{\epsilon}{\epsilon+1})^{t}\prod_{i\in
I}(1-\frac{|B_i|}{n(\epsilon+1)})^{-t}\un_{\{\pi_0 \preceq\pi\}}.$$
\item 
 From state $\pi=\{B_i,\; i\in I\}\in \mathfrak{P}(\enu{n})$, the only possible
transitions of $(\mathcal{C}_{t})_ {t\geq 0}$ are to partitions
$\pi^{\oplus J}$ obtained by merging blocks indexed by a subset  $J$ of  $I$ with $L\geq
2$ elements. The transition rate from $\pi$ to  $\pi^{\oplus J}$ is equal to: 
\begin{align}
\tau^{(n)}_{\pi,\pi^{\oplus J}}&=\sum_{k\geq
L}\frac{1}{kn^k(\epsilon+1)^k}\sum_{(i_1,\ldots,i_k)\in W_k(J)}
\prod_{u=1}^{k}|B_{i_u}|\label{eqtransitionKn1}\\
&=\sum_{k\geq
L}\frac{1}{kn^k(\epsilon+1)^k}\sum_{\substack{(k_1,\ldots,k_L)\in(\NN^*)^{L},
\\k_1+\ldots
+k_{L}=k}}\binom{k}{k_1,\ldots,k_{L}}\prod_{u=1}^{L}|B_{j_u}|^{k_{u}}\label{
eqtransitionKn2}
\end{align}
where $W_k(J)$ is the set of $k$-tuples of  $J$ in which each element of $J$
appears. 
\end{itemize}
\end{prop}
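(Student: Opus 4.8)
The plan is to read both assertions directly off the Poisson point process $\mathcal{DP}$, reducing everything to explicit evaluations of the loop measure $\mu$; the coincidence with the $\tilde P$-model then follows by repeating the same computation for $\tilde P$.

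\emph{Markov property and jump rates.} Since $\mathcal{C}_t$ is a measurable functional of $\mathcal{DP}\cap([0,t]\times\DL(\KN))$, and since the loops received during $(t,t+s]$ are independent of the past while the way they modify the connected components depends only on $\mathcal{C}_t$, the process $(\mathcal{C}_t)_t$ is a pure jump Markov process on the finite partition lattice of $\KN$; likewise for $(\tilde{\mathcal{C}}_t)_t$. Adding one loop $\ell$ to a configuration whose component partition is $\pi=\{B_i,\ i\in I\}$ produces $\pi^{\oplus J(\ell)}$ with $J(\ell)=\{i\in I:\ \ell\text{ visits }B_i\}$, and nothing changes when $|J(\ell)|\le 1$. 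Hence, for $J\subseteq I$ with $L=|J|\ge 2$, the jump rate from $\pi$ to $\pi^{\oplus J}$ is the $\mu$-mass of the set of loops whose visited blocks are exactly $\{B_i,\ i\in J\}$. Viewed as a set of based loops this set is shift-invariant, so its $\mu$-mass equals its $\mup$-mass; decomposing according to the length $k\ge L$ and the sequence $(i_1,\dots,i_k)\in W_k(J)$ of blocks visited in succession, and using $P_{x,y}=\frac1{n(\epsilon+1)}$ for all $x,y$, there are $\prod_{u}|B_{i_u}|$ based loops with a given block sequence and each has weight $\frac1{k}\bigl(\frac1{n(\epsilon+1)}\bigr)^{k}$, which yields \eqref{eqtransitionKn1}; grouping the tuples of $W_k(J)$ according to the number $k_u$ of occurrences of each element of $J$ turns the inner sum into the multinomial sum of \eqref{eqtransitionKn2}.

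\emph{The probability of being finer than $\pi$.} Let $\pi=\{B_i,\ i\in I\}$. Because the blocks are pairwise disjoint, any loop either lies inside exactly one of them or visits at least two, so $\{\mathcal{C}_t\preceq\pi\}$ holds precisely when $\pi_0\preceq\pi$ and no loop received before time $t$ visits more than one block. By the Poisson structure the number of such ``crossing'' loops received up to time $t$ is Poisson with parameter $t\,m(\pi)$, where $m(\pi)=\mu(\DL(\KN))-\sum_{i\in I}\mu(\{\ell\subset B_i\})$, whence $\Pd_{\pi_0}(\mathcal{C}_t\preceq\pi)=\un_{\{\pi_0\preceq\pi\}}\,e^{-t\,m(\pi)}$. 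For any $A\subseteq\KN$ the classical loop-measure identity gives $\mu(\{\ell\subset A\})=\sum_{k\ge 2}\tfrac1k\operatorname{tr}\bigl((P_A)^{k}\bigr)=-\log\det(\mathrm{Id}-P_A)-\operatorname{tr}(P_A)$, where $P_A$ is the restriction of $P$ to $A$ and the subtracted trace comes from the exclusion of length-one loops. For the walk on $\bar K_n$, $P_A=\frac1{n(\epsilon+1)}E_{|A|}$ with $E_{|A|}$ the $|A|\times|A|$ all-ones matrix, so $\det(\mathrm{Id}-P_A)=1-\frac{|A|}{n(\epsilon+1)}$ and $\operatorname{tr}(P_A)=\frac{|A|}{n(\epsilon+1)}$. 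Since $\sum_{i\in I}|B_i|=n$, all trace terms cancel and $m(\pi)=-\log\frac{\epsilon}{\epsilon+1}+\sum_{i\in I}\log\bigl(1-\frac{|B_i|}{n(\epsilon+1)}\bigr)$, which is the announced formula.

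\emph{Coincidence of the two processes.} It remains to run the same computation for the matrix $\tilde P$ on $K_n$. There $\tilde P_A=\frac1{n-1+n\epsilon}(E_{|A|}-\mathrm{Id})$ has $\operatorname{tr}(\tilde P_A)=0$ and eigenvalues $\frac{|A|-1}{n-1+n\epsilon}$ (simple) and $-\frac1{n-1+n\epsilon}$ (of multiplicity $|A|-1$), so $\mu(\{\ell\subset A\})=-\log\det(\mathrm{Id}-\tilde P_A)$ is again an explicit function of $|A|$; substituting into $m(\pi)$ and using $\sum_{i\in I}|B_i|=n$ together with the number of blocks of $\pi$, the terms in $\log(n-1+n\epsilon)$ all cancel and one recovers exactly the same value of $m(\pi)$, hence the same law for $\mathcal{C}_t$. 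The jump rates then coincide by the same combinatorial reduction (with $\tilde P_{x,y}$ in place of $P_{x,y}$), or simply because these marginals together with the increasing pure-jump structure determine the law of the process. (Conceptually, this coincidence reflects the fact that the walk on $\bar K_n$ observed only at the steps where it actually changes vertex is the $\tilde P$-walk on $K_n$, so that erasing the self-loop steps transports one loop soup onto the other up to loops made only of self-loops, which are irrelevant for the component partition.) The single delicate point in the whole argument is this last bookkeeping: retaining the $\operatorname{tr}(P_A)$ correction coming from the exclusion of length-one loops, and verifying that in each model the spurious logarithmic terms cancel so that the two formulas genuinely agree.
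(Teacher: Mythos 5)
Your argument is correct in substance and self-contained, which is more than the paper offers: the paper simply refers to \cite{LeJanLemaire13} for this proposition. Your three main steps — (a) reading the jump rate off the shift-invariant $\mup$-mass of loops hitting exactly the blocks of $J$, (b) expressing $\Pd_{\pi_0}(\mathcal{C}_t\preceq\pi)$ as $e^{-tm(\pi)}\un_{\{\pi_0\preceq\pi\}}$ with $m(\pi)$ computed from $\mu(\{\ell\subset A\})=-\log\det(\mathrm{Id}-P_A)-\operatorname{tr}(P_A)$, and (c) checking that the same computation with $\tilde P$ (eigenvalues $\frac{|A|-1}{n-1+n\epsilon}$ and $-\frac{1}{n-1+n\epsilon}$) yields the identical $m(\pi)$ because the spurious $\log(n(\epsilon+1)-1)$ terms cancel using $\sum_i|B_i|=n$ and the block count — all check out.

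One point should be corrected, though. In the last paragraph you offer two alternatives for why the jump rates coincide, and the first does not work as stated: running ``the same combinatorial reduction'' with $\tilde P_{x,y}$ in place of $P_{x,y}$ does \emph{not} reproduce \eqref{eqtransitionKn1}, because $\tilde P$ forbids consecutive repeated vertices. For a block sequence $(i_1,\dots,i_k)\in W_k(J)$ with two consecutive equal indices, the count of admissible based loops is no longer $\prod_u|B_{i_u}|$ (e.g.\ for $(j_1,j_1,j_2)$ one gets $|B_{j_1}|(|B_{j_1}|-1)|B_{j_2}|$ rather than $|B_{j_1}|^2|B_{j_2}|$). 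So the $\tilde P$-side produces a genuinely different-looking expression; its equality with \eqref{eqtransitionKn1} is a consequence, not a step, of the argument. Your second alternative is the right one, but the word ``marginals'' undersells it: one-dimensional marginals of a pure-jump Markov process do not determine its law. What you actually computed is $\Pd_{\pi_0}(\mathcal{C}_t\preceq\pi)$ for \emph{every} initial state $\pi_0$ (that is what the indicator $\un_{\{\pi_0\preceq\pi\}}$ encodes), i.e.\ the transition kernel $Q_t(\pi_0,\{\pi':\pi'\preceq\pi\})$ for all $\pi_0,\pi,t$; by M\"obius inversion on the finite partition lattice this determines the full semigroup, hence the generator, hence the jump rates. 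If you phrase the final step that way — the transition semigroups agree, therefore the generators agree, therefore both processes have jump rates \eqref{eqtransitionKn1}=\eqref{eqtransitionKn2} computed on the $\bar K_n$ side — the proof is complete.
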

Proposition \ref{semigroupC}  can be proved as in  \cite{LeJanLemaire13} where
the distribution of $(\tilde{\mathcal{C}}_t)_{t> 0}$ is computed. 
\subsection{Main results}
In the Erd\"os-R\'enyi random graph process, the connected component  of a
vertex $x$  can be compared  with a Galton-Watson process with ancestor $x$ in
which any vertices $y$ connected by an edge to a vertex $z$ are seen as
offspring of $z$. 
 Unlike  edges in Erd\"os-R\'enyi random graph, loops can cross a vertex several
times and intersect other loops at several vertices. But 
theses events are rare enough so that the component $C^{(n)}_{nt}(x)$ for large $n$ can
still be compared  with a Galton-Watson process with ancestor $x$ in which 
offspring of a vertex $z$ corresponds to vertices $y\neq z$ crossed by  loops in 
$\DL^{(n)}_{nt}$ that pass through $z$ and  the number of offspring can be
approximated by $\displaystyle{\sum_{\ell \in \mathcal{DL}^{(n)}_{nt}\; \text{s. t.}\; x\in \ell}
(|\ell|-1)}$. 

\noindent Before stating the main results, let us introduce some notations.
\begin{itemize}
 \item 
 For a positive real $\lambda$ and a probability distribution $\nu$ on $\RR$, let
$\CPois(\lambda,\nu)$ denote the compound Poisson distribution with parameters
$\lambda$ and $\nu$:  $\CPois(\lambda,\nu)$  is the probability distribution of $\sum_{i=1}^{N}X_i$,
where  $N$ is a Poisson distributed random variable with expected value
$\lambda$ and $(X_i)_i$ is a sequence of independent random variables with law
$\nu$ that are independent of $N$. 
\item For $p\in[0,1]$, let $\mathcal{G}_{\NN^*}(p)$ denote the geometric distribution 
on $\NN^*$ with parameter $p$ (its probability mass function is $(1-p)^{k-1}p\quad \forall k\in\NN^*$).
\item For $u\in\NN^*$, $\epsilon>0$ and $t>0$, let $T^{(u)}_{\epsilon,t}$ denote 
the total number of descendants  of a Galton-Watson
process with family size distribution 
$\CPois(\frac{t}{\epsilon(\epsilon+1)},\mathcal{G}_{\NN^*}(\frac{\epsilon}{
\epsilon+1}))$ and $u$ ancestors. 
\end{itemize}
\subsubsection{Component sizes}
The following theorem shows in particular, that for $t\in[0;\epsilon^2]$ the component 
size of a vertex at time $nt$  converges in distribution to $T^{(1)}_{\epsilon,t}$:
\begin{thm} 
\label{thmsub}
Let $\epsilon$ and  $t$ be two positive reals. Let $(k_n)_n$ be a  sequence of positive 
numbers.    
 $$\Pd(|C^{(n)}_{nt}(x)|\leq k_n)-\Pd(T^{(1)}_{\epsilon,t}\leq k_n)=O(\frac{k^{2}_n}{n}).$$ 
\end{thm}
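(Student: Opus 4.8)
The plan is to construct a coupling between the component exploration process of the vertex $x$ in $\mathcal{G}^{(n)}_{nt}$ and the Galton-Watson process with family-size law $\CPois(\frac{t}{\epsilon(\epsilon+1)},\mathcal{G}_{\NN^*}(\frac{\epsilon}{\epsilon+1}))$ realizing $T^{(1)}_{\epsilon,t}$, and to show that the two processes agree on the event that no vertex is ``explored twice'' and no loop is ``used twice''; the probability of the complementary (bad) event contributes the error term $O(k_n^2/n)$. First I would set up the exploration procedure (this is precisely what Section~\ref{sect:exploration} provides): we reveal the loops of $\DL^{(n)}_{nt}$ passing through $x$, and the vertices they visit become the first generation; iterating, each newly discovered vertex $z$ generates as offspring the as-yet-undiscovered vertices lying on loops through $z$ that have not been revealed before. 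The number of offspring of a single vertex $z$, absent collisions, is $\sum_{\ell \in \DL^{(n)}_{nt}\ :\ z\in\ell}(|\ell|-1)$. The key distributional input is that, for a fixed vertex $z$, the multiset of loops through $z$ in $\DL^{(n)}_{nt}$ has the law of a Poisson point process whose relevant functional (the total number of other-vertex visits) is exactly $\CPois(\frac{t}{\epsilon(\epsilon+1)},\mathcal{G}_{\NN^*}(\frac{\epsilon}{\epsilon+1}))$-distributed: a loop of length $k$ through a given vertex contributes, and the geometric structure comes from summing $\frac{1}{k(n(\epsilon+1))^k}\cdot(\text{number of such based loops})$ over rootings and positions. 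I would verify this computation carefully, since the specific compound-Poisson/geometric form is the whole point of the theorem.

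Next I would make the coupling precise. On the Galton-Watson side, attach to every potential vertex an i.i.d.\ $\CPois(\frac{t}{\epsilon(\epsilon+1)},\mathcal{G}_{\NN^*}(\frac{\epsilon}{\epsilon+1}))$ family-size and build the tree; on the graph side, run the exploration. Define the bad event $B_{k_n}$ as: within the first $k_n$ explored vertices, either some loop through an explored vertex also passes through a previously explored vertex (creating a ``back-edge'' or a self-intersection of the explored cluster), or some loop is counted from two different explored vertices, or a loop revisits a vertex so that its contribution to the offspring count differs from $|\ell|-1$. On $B_{k_n}^c$, the exploration is a tree and each explored vertex's offspring count has exactly the target distribution independently of the past (because the loops through distinct vertices, restricted to fresh vertices, are independent thinnings of the Poisson loop ensemble), so the two processes can be coupled to coincide until one of them exceeds $k_n$; hence $\{|C^{(n)}_{nt}(x)|\leq k_n\}$ and $\{T^{(1)}_{\epsilon,t}\leq k_n\}$ differ only on $B_{k_n}$, giving
$$|\Pd(|C^{(n)}_{nt}(x)|\leq k_n)-\Pd(T^{(1)}_{\epsilon,t}\leq k_n)|\leq \Pd(B_{k_n}).$$

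The main obstacle, and the bulk of the work, will be showing $\Pd(B_{k_n})=O(k_n^2/n)$. The intuition is clear: conditionally on having explored fewer than $k_n$ vertices, the chance that the next batch of loops (whose total length has finite expectation, uniformly in $n$, since $\Ed[\CPois(\dots)]<\infty$ and the geometric law has exponential tails) hits one of the $\le k_n$ already-explored vertices rather than a fresh one is $O(k_n/n)$ per loop-step; summing over $O(k_n)$ explored vertices yields $O(k_n^2/n)$. Making this rigorous requires (i) a uniform bound on the number of loops and their total length emanating from a vertex — here I would use that the loop measure through a fixed vertex has an explicitly computable, $n$-independent Laplace transform and controllable tails, so large deviations of the total explored length are negligible; and (ii) a union bound over pairs of explored vertices for the collision events, where the probability that a given loop through $z$ also visits a given other vertex $w$ is $O(1/n)$ for each additional step the loop takes. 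I would also need to handle loops that intersect themselves at a fresh vertex (length overcounting) and loops of length $k$ contributing to two different explored vertices simultaneously; both are $O(1/n)$ events per loop and fold into the same estimate. A minor technical point: if $k_n^2/n$ is not $o(1)$ the statement is vacuous, so one may as well assume $k_n = o(\sqrt{n})$ when carrying out the estimates, which keeps all error terms genuinely small. Assembling these bounds gives $\Pd(B_{k_n})=O(k_n^2/n)$ and completes the proof.
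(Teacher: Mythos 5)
Your overall plan — the component exploration procedure, coupling it to a Galton--Watson process, and absorbing the discrepancies into a bad event of probability $O(k_n^2/n)$ — matches the paper's strategy. But there is a concrete gap in the claim that, for a fixed vertex $z$, the quantity $\sum_{\ell\in\DL^{(n)}_{nt}:\ z\in\ell}(|\ell|-1)$ is \emph{exactly} $\CPois\big(\tfrac{t}{\epsilon(\epsilon+1)},\mathcal{G}_{\NN^*}(\tfrac{\epsilon}{\epsilon+1})\big)$-distributed. It is not: it is compound Poisson with parameter $nt\beta_{n,\epsilon}$, where $\beta_{n,\epsilon}=\log(1+\tfrac{1}{n\epsilon})-\tfrac{1}{n(\epsilon+1)}$, and a jump law $\nu_{n,\epsilon}$ whose $j$-th weight is $\propto\big(1-(1-\tfrac1n)^{j+1}\big)(\epsilon+1)^{-(j+1)}/(j+1)$. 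Both $n\beta_{n,\epsilon}\to\tfrac{1}{\epsilon(\epsilon+1)}$ and $\nu_{n,\epsilon}\to\mathcal{G}_{\NN^*}(\tfrac{\epsilon}{\epsilon+1})$ only asymptotically, with total variation error $O(1/n)$. So your proposed direct coupling of the exploration with a GW process having the \emph{limiting} offspring law cannot be exact step by step, and the identification $\{|C^{(n)}_{nt}(x)|\leq k_n\}\triangle\{T^{(1)}_{\epsilon,t}\leq k_n\}\subset B_{k_n}$ fails as stated.

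The fix, which is what the paper does, is to split the comparison in two. First compare $|C^{(n)}_{nt}(x)|$ to the total progeny $\bar T^{(n)}_{nt}$ of the GW process with the finite-$n$ offspring law $\CPois(nt\beta_{n,\epsilon},\nu_{n,\epsilon})$; this is where your bad-event bookkeeping (self-intersections, back-edges to explored or active vertices, loops hitting $H_{k-1}$) lives, and it gives the $O(k_n^2/n)$ term via estimates like Lemma \ref{lemrecoupe} and Lemma \ref{lemintersect}. Then, separately, compare $\bar T^{(n)}_{nt}$ to $T^{(1)}_{\epsilon,t}$: by a Strassen-type coupling of the two offspring laws and the random-walk representation of total progeny (Lemma \ref{lem_couplingT}), $|\Pd(\bar T^{(n)}_{nt}\geq k)-\Pd(T^{(1)}_{\epsilon,t}\geq k)|$ is bounded by $2(k-1)\,\TV(\text{offspring laws})=O(k_n/n)$, which is absorbed into the stated bound. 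Your observation that one may assume $k_n=o(\sqrt n)$ (else the bound is vacuous) is correct and does simplify the bookkeeping, but it does not remove the need for this second comparison step. With that step added, your proof goes through and is essentially the paper's argument.
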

\begin{rem}\
Let $t$ and $\epsilon$ be two positive reals. 
\begin{enumerate}
\item A Galton-Watson process with family size distribution 
$\CPois(\frac{t}{\epsilon(\epsilon+1)},\mathcal{G}_{\NN^*}(\frac{\epsilon}{
\epsilon+1}))$ is subcritical if and only if $t<\epsilon^2$. Let
$q_{\epsilon,t}$ denote the extinction probability of  such a Galton-Watson
process starting with one ancestor. It is a decreasing function of $t$ and an
increasing function of $\epsilon$. \\ 
Moreover,
\begin{equation}
\label{lawTGW}
\left\{\begin{array}{l}\Pd(T^{(u)}_{\epsilon,t} = u) = \displaystyle{e^{-\frac{u
t}{\epsilon(\epsilon+1)}}}\\
\Pd(T^{(u)}_{\epsilon,t} = k) = \displaystyle{\frac{u}{k}\frac{e^{-\frac{k
t}{\epsilon(\epsilon+1)}}}{(\epsilon+1)^{k-u}}\sum_{j=1}^{k-u}\binom{k-u-1}{j-1}
\frac{1}{j!}\Big(\frac{kt}{\epsilon+1}\Big)^j}\quad \forall k\geq u+1.  
\end{array}\right.
\end{equation}
For $t\leq \epsilon^2$, $T^{(u)}_{\epsilon,t}$ is almost surely finite and for 
$t>\epsilon^2$, $P(T^{(u)}_{\epsilon,t}<\infty)=q_{\epsilon,t}^{u}<1$. 
(see appendix \ref{propGW} for a detailed description of the properties of such a Galton-Watson process.)

\item 
The convergence result in Theorem \ref{thmsub} still holds if  $t$ and
$\epsilon$ are replaced in the statement by two positive sequences $(t_n)_n$ and
$(\epsilon_n)_n$ that converge to $t$ and $\epsilon$ respectively. 
\end{enumerate}
\end{rem}
Theorem \ref{thmsub} is used to show that the 
component sizes of $(\mathcal{G}^{(n)}_{nt})_{t\geq 0}$ undergo
a phase transition at $t=\epsilon^2$ similar to the phase transition of the 
Erd\"os-R\'enyi random graph process.  
\begin{thm}\label{thmtransition} Let $C^{(n)}_{nt,1}$ and $C^{(n)}_{nt,2}$ denote the first and second largest 
components of the random graph $\mathcal{G}^{(n)}_{nt}$. 
\begin{enumerate}
\item \textbf{Subcritical regime.} 
\label{subcrit}
 Assume that $0<t<\epsilon^2$. Set
$h(t)=\sup_{\theta\in]0,\log(\epsilon+1)[}(\theta-\log(L_{t,\epsilon}(\theta)))$
where $L_{t,\epsilon}$ is the  moment-generating function of the compound Poisson
distribution\linebreak  ${\CPois(\frac{t}{\epsilon(\epsilon+1)},\Geo)}$.\footnote{$h(t)$ is
the value of the Cram\'er function at 1 of
$\CPois(\frac{t}{\epsilon(\epsilon+1)},\Geo)$. As the expectation of
$\CPois(\frac{t}{\epsilon(\epsilon+1)},\Geo)$ is $\frac{t}{\epsilon^2}$, $h(t)$ 
is positive for $t< \epsilon^2$ and vanishes at $t=\epsilon^2$.} \\
For every $a>1/h(t)$, 
$\Pd(|C_{nt,1}^{(n)}|> a\log(n))$ converges to $0$ as $n$ tends
to $+\infty$. 
\item \textbf{Supercritical regime.}
Assume that $t>\epsilon^2$ and  let $q_{\epsilon,t}$ denote the extinction probability of a 
Galton-Watson process with one
progenitor and $\CPois(\frac{t}{\epsilon(\epsilon+1)},\Geo)$ offspring distribution.\\
For every $a\in]1/2,1[$, there exist $b>0$ and $c>0$ such that 
$$\Pd[||C^{(n)}_{nt,1}|-(1-q_{\epsilon, t})n|\geq n^a]+\Pd[|C^{(n)}_{nt,2}|\geq c\log(n)]=O(n^{-b}).$$ 
\end{enumerate}
\end{thm}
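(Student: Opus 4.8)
The plan is a union bound followed by a stochastic domination by a subcritical Galton--Watson tree; note that Theorem~\ref{thmsub} alone does not suffice here, since its error $O(k_n^2/n)$ times the number of vertices does not vanish. First $\Pd(|C^{(n)}_{nt,1}|>a\log n)\le n\,\Pd(|C^{(n)}_{nt}(x)|>a\log n)$. Fix $t'\in(t,\epsilon^2)$: in the exploration of Section~\ref{sect:exploration}, the number of vertices discovered from an explored vertex $z$ is bounded by its total loop-length excess $\sum_{\ell\in\DL^{(n)}_{nt},\ z\in\ell}(|\ell|-1)$, whose law is, for $n$ large (the $O(1/n)$ finite-size corrections being absorbed in $t'-t$), stochastically dominated by $\CPois(\frac{t'}{\epsilon(\epsilon+1)},\Geo)$, and collisions of loops only shrink the component; hence $|C^{(n)}_{nt}(x)|$ is dominated by $T^{(1)}_{\epsilon,t'}$. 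Encoding that tree by its exploration walk gives $\{T^{(1)}_{\epsilon,t'}>k\}\subseteq\{\xi_1+\dots+\xi_k\ge k\}$ with $\xi_i$ \iid of law $\CPois(\frac{t'}{\epsilon(\epsilon+1)},\Geo)$, so Chernoff's bound (the moment generating function being finite on $\theta<\log(\epsilon+1)$) yields $\Pd(T^{(1)}_{\epsilon,t'}>k)\le e^{-k\,h(t')}$ for all $k$, which is the estimate of appendix~\ref{propGW}. Therefore $\Pd(|C^{(n)}_{nt,1}|>a\log n)\le n^{\,1-a\,h(t')}$; since $a\,h(t)>1$ and $h$ is continuous, $t'$ can be chosen with $a\,h(t')>1$, and the bound tends to $0$.

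\textbf{Supercritical regime ($t>\epsilon^2$), lower bound via exploring the giant.} Write $\zeta=1-q_{\epsilon,t}\in(0,1)$. Explore $C^{(n)}_{nt}(x)$ from a fixed vertex: during its first $\sqrt n/\log n$ steps it agrees with the Galton--Watson tree of Theorem~\ref{thmsub} up to a collision of probability $o(1)$, so $\Pd(|C^{(n)}_{nt}(x)|>\sqrt n/\log n)=\zeta+o(1)$. Past that, the exploration is a Markov chain: when an active vertex $z$ is processed, having explored $j$ and touched $T$ vertices, a loop of length $\ell$ through $z$ is discovered fresh only if it avoids the processed set, which happens with the unrestricted rate times $(1-\tfrac{j}{n})^{\ell-1}$, and each of its $\ell-1$ other vertices is new with probability $\tfrac{n-T}{n-j}$; summing, the conditional mean number of new vertices is $\tfrac{t(1-u)}{(\epsilon+s)^2}$ with $s=j/n$, $u=T/n$. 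Hence the touched fraction follows the fluid limit $\tfrac{du}{ds}=\tfrac{t(1-u)}{(\epsilon+s)^2}$, that is $u(s)=1-\exp(-\tfrac{ts}{\epsilon(\epsilon+s)})$, and the exploration stops when the active fraction $u(s)-s$ vanishes, at $s=s^\ast$ solving $1-s=\exp(-\tfrac{ts}{\epsilon(\epsilon+s)})$ --- the same fixed-point equation that defines $q_{\epsilon,t}$, so $s^\ast=\zeta$. A martingale concentration bound for the increments (whose exponential tails come from $\CPois(\frac{t}{\epsilon(\epsilon+1)},\Geo)$) shows that, with probability $1-O(n^{-B})$ for every $B$, each explored component is either of size $<\sqrt n/\log n$ or equal to $\zeta n+O(\sqrt n\log n)$. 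Running the exploration from $\lceil C\log n\rceil$ independent uniform vertices, the probability that all of them give components of size $<\sqrt n/\log n$ is $(1-\zeta+o(1))^{C\log n}+O(\tfrac{\log n}{n})=O(n^{-b_1})$ for $C$ large; hence with probability $1-O(n^{-b_1})$ some component has size $\zeta n+O(\sqrt n\log n)$, and since $a>1/2$ this gives $|C^{(n)}_{nt,1}|\ge\zeta n-n^a$.

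\textbf{Supercritical regime, upper bound and second component via discrete duality.} Condition on $C^{(n)}_{nt}(x)=S$ with $|S|=\zeta n+O(\sqrt n\log n)$. By the Poisson structure the loops lying entirely in $S^c$ are unaffected by this conditioning, and they form exactly the loop-cluster model on the complete graph with vertex set $S^c$ and transition probabilities $\tfrac1{n(\epsilon+1)}$, i.e.\ with killing parameter $\tfrac{n(\epsilon+1)}{|S^c|}-1$ run up to time $\tfrac{nt}{|S^c|}$; since $|S^c|=q_{\epsilon,t}n(1+o(1))$, these are subcritical parameters (equivalently, the Galton--Watson process obtained by conditioning the original one on extinction has mean offspring $<1$, by convexity of its generating function). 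By the subcritical tail estimate of appendix~\ref{propGW} and a union bound over the at most $n$ vertices of $S^c$, with probability $1-O(n^{-b_2})$ every component of $\mathcal{G}^{(n)}_{nt}$ contained in $S^c$ has size $\le c\log n$; intersecting with the event of the previous paragraph, $C^{(n)}_{nt}(x)$ is then the unique component of size exceeding $c\log n$. Hence $|C^{(n)}_{nt,2}|\le c\log n$ and $|C^{(n)}_{nt,1}|=|C^{(n)}_{nt}(x)|=\zeta n+O(\sqrt n\log n)\le\zeta n+n^a$; collecting the error probabilities gives the claimed bound with $b=\min(b_1,b_2)$.

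\textbf{Main obstacle.} The subcritical half is a direct consequence of the exploration of Section~\ref{sect:exploration}. The substance lies in the supercritical half, and the delicate point is the fluid-limit and concentration analysis of the exploration beyond the branching regime: one must track the ``a loop is discovered at most once'' effect carefully --- it is exactly what replaces the naive drift $\tfrac{t(1-u)}{\epsilon^2}$ by $\tfrac{t(1-u)}{(\epsilon+s)^2}$ and thereby makes the limiting cluster fraction equal $1-q_{\epsilon,t}$ rather than the giant fraction of an ordinary $\Pois(t/\epsilon^2)$ random graph --- and then control the associated martingale down to fluctuations of size $O(\sqrt n\log n)$, which is precisely what the hypothesis $a>1/2$ is calibrated to absorb. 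The discrete-duality identification of the complement model and the verification of its subcriticality are comparatively routine once the generating-function identities are organised.
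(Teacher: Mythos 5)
Your two halves have quite different status relative to the paper.

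\textbf{Subcritical regime.} Your argument (union bound, then stochastic domination of the exploration by a subcritical Galton--Watson tree, then a Chernoff bound in the random-walk encoding) is essentially the paper's proof. The one inefficiency is the detour through $t'\in(t,\epsilon^2)$: the paper shows directly that the moment-generating function of $\bar\zeta^{(n)}_{nt,x}$ factors as $L_{t,\epsilon}(\theta)\exp(tg_n(\theta))$ with $g_n(\theta)\le 0$ for $\theta\ge 0$, so the finite-$n$ offspring law is already dominated by $\CPois(\tfrac{t}{\epsilon(\epsilon+1)},\Geo)$ at the original $t$ and the Chernoff bound reads $\Pd(|C^{(n)}_{nt}(x)|>k)\le e^{-kh(t)}$ uniformly in $n$. (Indeed, the loop rate for a batch of size $j$ at finite $n$ is $nt\cdot\tfrac{1-(1-1/n)^{j+1}}{(j+1)(\epsilon+1)^{j+1}}\le\tfrac{t}{(\epsilon+1)^{j+1}}$, the limiting rate, so Poisson thinning gives the coupling directly.) Your version is correct but slightly lossier.

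\textbf{Supercritical regime.} Here you take a genuinely different route. The paper proceeds by: (a) Theorem~\ref{thmsub} plus a supercritical Galton--Watson tail bound to get $\Pd(|C^{(n)}_{nt}(v)|\ge c\log n)=1-q_{\epsilon,t}+O(\log^2 n/n)$; (b) a second-moment/Chebyshev argument on $Z_{nt}(c\log n)$, the number of vertices in components of size $\ge c\log n$, using that changing the component of one vertex affects the conditional law of another only through the probability that some loop links two $k$-subsets; (c) an exponential bound showing that w.h.p.\ no component has intermediate size between $c_1\log n$ and $n^\beta$; (d) uniqueness of the large component via the observation that two disjoint active sets of size $\Theta(n^\beta)$ must be joined by a loop w.h.p. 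Your route replaces (b)--(d) by a fluid-limit/martingale analysis of a single exploration, followed by discrete duality for the complement. Your drift computation $\tfrac{du}{ds}=\tfrac{t(1-u)}{(\epsilon+s)^2}$ is correct (the $(\epsilon+s)^2$ denominator exactly captures that loops cost more to stay inside the neutral set in the loop model), and the fixed point $1-s=\exp(-\tfrac{ts}{\epsilon(\epsilon+s)})$ does coincide with $1-s=q_{\epsilon,t}$, since $q_{\epsilon,t}$ solves $q=\exp(-\tfrac{t(1-q)}{\epsilon(\epsilon+1-q)})$. The duality observation you invoke is precisely Lemma~\ref{dualproces} in the appendix (and the remark after it), which the paper states but does not actually use in the proof of Theorem~\ref{thmtransition}. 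So the two approaches trade a second-moment computation plus a sprinkling/uniqueness argument against a fluid limit plus duality; your version identifies the limiting ODE and the duality structure explicitly, while the paper's version is more modular and avoids having to control the full exploration trajectory.

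The one place where your sketch genuinely underspecifies is the assertion ``a martingale concentration bound \dots shows that, with probability $1-O(n^{-B})$ for every $B$, each explored component is either of size $<\sqrt n/\log n$ or equal to $\zeta n+O(\sqrt n\log n)$.'' This is the entire technical content of the supercritical half and it does not come for free: one must rule out the exploration dying in the window $[\sqrt n/\log n, \zeta n - n^a]$, which requires a uniform-in-time deviation bound for a Markov chain whose increments change law with $(s,u)$ and which have only exponential (not bounded) tails. The paper's Proposition~\ref{propintermedsize} handles exactly this, by comparing the active-set size with sums of i.i.d.\ $\tilde Y^{(n)}_{t,i}$ obtained by restricting loops to a fixed neutral pool of $n-\lceil 2n^\beta\rceil$ vertices and then applying a large-deviation bound. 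Your sketch correctly identifies this as the crux; to turn it into a proof you would need either that comparison argument or a Doob/Freedman inequality for the exploration supermartingale together with a careful handling of the time-inhomogeneity. As a proof sketch the route is sound, and it is notably different from the paper's.
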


\subsubsection{Coagulation equations}
We turn  to the hydrodynamic behavior of the coalescent process generated
by the loop sets.  Components of size $k$ can be seen as a cluster of $k$ particles of unit mass; 
at the same time, several clusters of masses $k_1,\ldots, k_j$ can merge into a unique cluster 
of mass $k_1+\ldots+k_j$ at a rate proportional to the product $k_1\ldots k_j$.  The initial 
state corresponds to the monodisperse configuration ($n$ particles of unit mass).  
The following proposition describes the asymptotic limit of  the average number of
components of size $k$  at time $nt$ as the number of particles $n$ tends to $+\infty$:
\begin{prop}
\label{prop:hydrodyn}
For $k\in \NN^*$, $n\in \NN$ and $t>0$, let $\rho_{\epsilon,t}^{(n)}(k) =
\frac{1}{nk}| \{x\in \KN,\ |C^{(n)}_{nt}(x)| = k\}|$ be the average number of
components of size $k$  and let $\rho_{\epsilon,t}(k) =\frac{1}{k}P(T^{(1)}_{\epsilon, t}
= k)$.
\begin{enumerate}
\item $(\rho^{(n)}_{\epsilon,t}(k))_n$ converges to $\rho_{\epsilon,t}(k)$ in $L^2$ for every
$t>0$. 
\item $(\rho_{\epsilon, t}(k),\ k\in \NN^* \text{ and } t\geq 0)$ is a
solution to the following coagulation equations:
\begin{equation}
\label{coageq}
\frac{d}{dt}\rho_t(k) =
\sum_{j=2}^{+\infty}\frac{1}{(\epsilon+1)^j}G_{j}(\rho_t,k)
\end{equation}
where 
\begin{multline}
\label{Gj}
G_{j}(\rho_t,k)=
\frac{1}{j}\Big(\sum_{\substack{(i_1,\ldots,i_{j})\in(\NN^*)^{j}\\  i_1+\cdots
+i_j=k}}\prod_{u=1}^{j}i_u\rho_{t}(i_u)\Big)\un_{\{j\leq
k\}}-k\rho_t(k)\Big(\sum_{i=1}^{+\infty}
i\rho_t(i)\Big)^{j-1}\\
-k\rho_{t}(k)\sum_{h=1}^{j-1}\binom{j-1}{h}\Big(\sum_{i=1}^{+\infty}i(\rho_{0}(i)-\rho_t(i))\Big)^h \Big(\sum_{u=1}^{+\infty}
u\rho_t(u)\Big)^{j-1-h}
\end{multline}
\end{enumerate} 
\end{prop}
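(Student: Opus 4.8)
The plan is to get the $L^2$ convergence from the two usual estimates: the means converge to $\rho_{\epsilon,t}(k)$ and the variances tend to $0$. By exchangeability of the vertices, $\Ed[\rho^{(n)}_{\epsilon,t}(k)]=\frac1k\Pd(|C^{(n)}_{nt}(1)|=k)$, and subtracting the bound of Theorem \ref{thmsub} at $k_n=k$ and at $k_n=k-1$ gives $\Pd(|C^{(n)}_{nt}(1)|=k)=\Pd(T^{(1)}_{\epsilon,t}=k)+O(k^2/n)$, so $\Ed[\rho^{(n)}_{\epsilon,t}(k)]\to\rho_{\epsilon,t}(k)$ for each fixed $k$. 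Writing $N^{(n)}_k=|\{x\in\KN:|C^{(n)}_{nt}(x)|=k\}|=nk\,\rho^{(n)}_{\epsilon,t}(k)$, it remains to prove $\var(N^{(n)}_k)=o(n^2)$; as $N^{(n)}_k\le n$ the diagonal of the covariance matrix contributes only $O(n)$, so everything reduces to bounding $\sum_{x\ne y}\mathrm{Cov}\big(\un_{\{|C^{(n)}_{nt}(x)|=k\}},\un_{\{|C^{(n)}_{nt}(y)|=k\}}\big)$.

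\noindent\textbf{Part 1, the decorrelation estimate.} Fix $x\ne y$ and split according to whether $y\in C^{(n)}_{nt}(x)$. The contributions of $\{y\in C^{(n)}_{nt}(x)\}$ sum, over $y\ne x$, to $\Ed[(|C^{(n)}_{nt}(x)|-1)\un_{\{|C^{(n)}_{nt}(x)|=k\}}]\le k$, which is harmless. For the complementary event I would use the restriction property of Poissonian loop ensembles: the loops split into those lying in a set $A$, those lying in $\KN\setminus A$, and those meeting both, and restricting the Poisson point process to each of these classes gives independent Poisson point processes; since the event $\{C^{(n)}_{nt}(x)=A\}$ (with $|A|=k$, $x\in A$, $y\notin A$) only constrains the first and third classes, conditionally on it the loops lying in $\KN\setminus A$ form an unconditioned Poissonian loop ensemble on $\KN\setminus A$, and a length-$\ell$ such loop still has $\mu$-weight $(\ell\,(n(\epsilon+1))^\ell)^{-1}$ — which is exactly its weight in the model of the paper on $n-k$ vertices with parameter $\epsilon'_n$ determined by $\epsilon'_n+1=\frac{n(\epsilon+1)}{n-k}$, run (after the time change $nt=(n-k)t'_n$) up to time $(n-k)t'_n$ with $t'_n=\frac{nt}{n-k}$; both $\epsilon'_n$ and $t'_n$ tend to $\epsilon$ and $t$, and on $\{C^{(n)}_{nt}(x)=A\}$ the component $C^{(n)}_{nt}(y)$ is measurable with respect to those loops. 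Hence, by Theorem \ref{thmsub} in the form allowing converging parameters (the Remark following it) and the continuity of $(\epsilon',t')\mapsto\Pd(T^{(1)}_{\epsilon',t'}=k)$ read off \eqref{lawTGW}, $\Pd(|C^{(n)}_{nt}(y)|=k\mid C^{(n)}_{nt}(x)=A)=k\rho_{\epsilon,t}(k)+o(1)$ uniformly over admissible $(A,y)$, and likewise $\Pd(|C^{(n)}_{nt}(y)|=k)=k\rho_{\epsilon,t}(k)+o(1)$. A short computation then writes each covariance as $o(1)$ plus a term carried by $\{y\in C^{(n)}_{nt}(x)\}$, so $\sum_{x\ne y}\mathrm{Cov}=n\cdot\big(O(1)+o(n)\big)=o(n^2)$ and $\var(\rho^{(n)}_{\epsilon,t}(k))\to0$. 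The hard part here is precisely this decorrelation, and in particular the identification of the restricted ensemble as an instance of the same model with explicitly shifted $(\epsilon,t)$.

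\noindent\textbf{Part 2, reduction to a transport equation.} First I would simplify the loss terms of \eqref{Gj}. Since the offspring law at $t=0$ is $\CPois(0,\cdot)=\delta_0$, one has $T^{(1)}_{\epsilon,0}=1$ a.s., so $\rho_0(i)=\un_{\{i=1\}}$, hence $\sum_i i\rho_0(i)=1$ and, putting $m(t):=\sum_i i\rho_t(i)=F(1,t)$ with $F(s,t):=\sum_{k\ge1}k\rho_t(k)s^k=\Ed[s^{T^{(1)}_{\epsilon,t}}]$, one gets $\sum_i i(\rho_0(i)-\rho_t(i))=1-m(t)$. The binomial theorem then collapses the bracket, $m(t)^{j-1}+\sum_{h=1}^{j-1}\binom{j-1}{h}(1-m(t))^h m(t)^{j-1-h}=1$, so (this also makes the series in \eqref{coageq} manifestly convergent)
\[
G_j(\rho_t,k)=\frac1j\Big(\sum_{\substack{(i_1,\ldots,i_j)\in(\NN^*)^j\\ i_1+\cdots+i_j=k}}\prod_{u=1}^j i_u\rho_t(i_u)\Big)\un_{\{j\le k\}}-k\rho_t(k),
\]
and, with $\sum_{j\ge2}(\epsilon+1)^{-j}=\frac1{\epsilon(\epsilon+1)}$, equation \eqref{coageq} becomes $\frac{d}{dt}\rho_t(k)=\sum_{j=2}^{k}\frac1{j(\epsilon+1)^j}\sum_{i_1+\cdots+i_j=k}\prod_u i_u\rho_t(i_u)-\frac{k}{\epsilon(\epsilon+1)}\rho_t(k)$. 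Multiplying this by $ks^k$, summing over $k$, and using $\sum_i i^2\rho_t(i)s^i=s\,\partial_sF$ and $\sum_{j\ge2}\frac{F^{j-1}}{(\epsilon+1)^j}=\frac{F}{(\epsilon+1)(\epsilon+1-F)}$ (the geometric series converges because $F\le1<\epsilon+1$), the whole system becomes equivalent, for $s\in[0,1)$ and $t\ge0$, to the single transport equation
\[
\partial_tF=(s\,\partial_sF)\,\frac{F-1}{\epsilon(\epsilon+1-F)},\qquad F(s,0)=s.
\]

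\noindent\textbf{Part 2, verification and conclusion.} Since $T^{(1)}_{\epsilon,t}$ is the total progeny of a Galton--Watson process with offspring probability-generating function $g_t(s)=\exp\!\big(\tfrac{t(s-1)}{\epsilon(\epsilon+1-s)}\big)$, the standard fixed-point identity reads $F=s\,g_t(F)$, i.e. $\log F-\log s-t\,\Phi(F)=0$ with $\Phi(u)=\tfrac{u-1}{\epsilon(\epsilon+1-u)}$. Differentiating this relation in $t$ and in $s$ gives $\partial_tF=\big(\tfrac1F-t\Phi'(F)\big)^{-1}\Phi(F)$ and $\partial_sF=\big(\tfrac1F-t\Phi'(F)\big)^{-1}\tfrac1s$, whence $\partial_tF=(s\,\partial_sF)\,\Phi(F)$, which is exactly the transport equation above, and the initial condition holds because $g_0\equiv1$. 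Finally, the system \eqref{coageq} is triangular — $\frac{d}{dt}\rho_t(k)$ depends only on $\rho_t(1),\ldots,\rho_t(k)$ and is affine in $\rho_t(k)$ — so it has a unique solution on $\RR_+$ with $\rho_0(k)=\un_{\{k=1\}}$, and $(\rho_{\epsilon,t}(k))$, satisfying both the equations and this initial data, is that solution. The two points to be careful about are the collapse of the loss terms above and the justification of the term-by-term generating-function manipulation (legitimate for $s\in[0,1)$, inside the radius of convergence); it is also worth noting that these equations are precisely what the heuristic hydrodynamic computation from the transition rates of Proposition \ref{semigroupC} produces, the terms with $h\ge1$ in \eqref{Gj} corresponding to mergers of finite clusters with the emerging giant component when $t>\epsilon^2$.
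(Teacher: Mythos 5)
Your proposal is correct, and the two parts play out differently when compared with the paper's proof.

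\textbf{Part 1.} Your argument is, in substance, the paper's argument. The paper proves the $L^2$ convergence by showing the first moment converges to $\rho_{\epsilon,t}(k)$ (via Theorem \ref{thmsub}) and the second moment to $\rho_{\epsilon,t}(k)^2$ (via Corollary \ref{coroltwocomponents}). What you do is inline the proof of Corollary \ref{coroltwocomponents} rather than cite it: the restriction property of the Poisson loop ensemble, the observation that the loops avoiding a set $A$ of size $k$ form the same model on $n-k$ vertices with parameters $\epsilon'_n\to\epsilon$ and $t'_n\to t$, and the appeal to the parameter-robust version of Theorem \ref{thmsub}, are exactly the ingredients (i)--(ii) and the computation in the paper's proof of that corollary. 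So this part does not constitute a different route.

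\textbf{Part 2.} Here you take a genuinely different path. You first collapse the loss terms of $G_j$ using $\rho_0(i)=\un_{\{i=1\}}$ and the binomial theorem, arriving at $G_j(\rho_t,k)=\tfrac1j\sum_{i_1+\cdots+i_j=k}\prod_u i_u\rho_t(i_u)\,\un_{\{j\le k\}}-k\rho_t(k)$; the paper performs the same collapse but phrases it probabilistically as $G_j(\rho_{\epsilon,t},k)=\tfrac1j\Pd(T^{(j)}_{\epsilon,t}=k)-k\rho_{\epsilon,t}(k)$. From there the two approaches diverge. The paper then substitutes Dwass's explicit formula for $\Pd(T^{(j)}_{\epsilon,t}=k)$ (Lemma \ref{lemTdistr}), performs the sum over $j$, and identifies the result with $\tfrac{d}{dt}\rho_{\epsilon,t}(k)$ via the combinatorial identity $\binom{m}{k}=\sum_{j=k-1}^{m-1}\binom{j}{k-1}$. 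You instead pass to the generating function $F(s,t)=\Ed[s^{T^{(1)}_{\epsilon,t}}]=\sum_k k\rho_t(k)s^k$, package the infinite ODE system as the single transport equation $\partial_t F=(s\partial_s F)\tfrac{F-1}{\epsilon(\epsilon+1-F)}$ (using $\sum_{j\ge2}(\epsilon+1)^{-j}F^{j-1}=\tfrac{F}{(\epsilon+1)(\epsilon+1-F)}$), and verify this PDE by implicit differentiation of the Lagrange fixed-point identity $F=s\,g_t(F)$. Both arguments are sound. The paper's computation is elementary but hinges on a somewhat opaque binomial identity; yours is structurally cleaner, makes the convergence of the series manifest after the collapse, and exposes the transport structure of the equation, at the modest cost of having to justify the term-by-term expansion and Taylor-coefficient matching for $s\in[0,1)$ — a point you correctly flag. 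The final remark that the system is triangular, hence the solution with monodisperse initial data is unique, is a welcome addition not present in the paper.

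One small clarification worth making explicit in your write-up: when you pass from the PDE for $F$ back to the claim for the $\rho_t(k)$'s, you should note that the right-hand side of the PDE has the power-series expansion $\sum_k\big(k\sum_{j=2}^{k}\tfrac{1}{j(\epsilon+1)^j}\sum_{i_1+\cdots+i_j=k}\prod_u i_u\rho_t(i_u)-\tfrac{k^2}{\epsilon(\epsilon+1)}\rho_t(k)\big)s^k$ with radius of convergence at least $1$, so that equating Taylor coefficients at each $k$ is legitimate; this is exactly the reverse of the manipulation you describe when deriving the PDE and closes the equivalence.
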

\begin{rem}\
\begin{enumerate}
\item Consider a medium with integer mass particles and let $\rho_{t}(k)$ denote
the density of mass $k$ particles at time $t$. Equation \eqref{coageq} describes
the evolution of $\rho_{t}(k)$ if for every $j\geq 2$ the number of aggregations
of $j$ particles of mass $i_1,\ldots,i_j$ in time interval $[t,t+dt]$ is assumed
to be $$\frac{1}{j(\epsilon+1)^j}\rho_t(i_1)\ldots\rho_t(i_j)K_j(i_1,\ldots,
i_j) dt$$ 
where $K_j(i_1,\ldots,i_j)=i_1\cdots i_j$ is the multiplicative kernel. \\
The first sum of $G_j$ describes the formation of a particle of mass $k$ by
aggregation of $j$ particles, the second sum describes the ways  a particle of
mass $k$ can  be aggregated with $j-1$ other particles. The third term in the definition of $G_j$ is null if the total mass is preserved. Otherwise, the decrease of the total mass can be interpreted as the appearance of a `gel' and the  third term  describes the different ways a particle of mass $k$ can be aggregated with the gel and other particles. 
\item $\rho_{\epsilon,t}$ defined by  $\rho_{\epsilon,t}(k)=\frac{1}{k}\Pd(T^{(1)}_{\epsilon,t}=k)$  for every $k\in \NN^*$ gives an explicit solution of \eqref{coageq}  with mass-conserving property on the inteval $[0;\epsilon^2]$.  Its second moment $\displaystyle{\sum_{k=1}^{+\infty}k^2\rho_{\epsilon, t}(k)=(1-\frac{t}{\epsilon^2})^{-1}}$  diverges as $t$ tends to $\epsilon^2$.  
\end{enumerate}
\end{rem}

\noindent Let us note that the set of equations 
\[
\frac{d}{dt}\rho_t(k) = G_{2}(\rho_t,k), \quad \forall k\in\NN^*
\]
corresponds to the Flory's coagulation equations with the multiplicative
kernel (see equation \eqref{Floryeq}). \\
 The following proposition shows that for every $j\geq 2$, an approximation of the solution of the set of equations  
\[
\frac{d}{dt}\rho_t(k) = G_{j}(\rho_t,k), \quad \forall k\in\NN^*
\]
can be constructed by considering only loops of length $j$:
\begin{prop}
\label{prop:hydrodynj}
 Let $j$ be an integer greater than or equal to $2$. The set of loops of length $j$ before time $t$ defines a subgraph of $\mathcal{G}_{t}^{(n)}$ denoted by $\mathcal{G}_{t}^{(n,j)}$. For $k\in \NN^*$, $n\in \NN$ and $t>0$, let $\rho_{\epsilon,t}^{(n,j)}(k)$ be the average number of
components of size $k$ in the random graph $\mathcal{G}_{nt(\epsilon+1)^j}^{(n,j)}$.
\begin{enumerate}
\item $(\rho^{(n,j)}_{\epsilon,t}(k))_n$ converges to $\rho^{(j)}_{t}(k)=e^{-tk}\dfrac{(tk)^{\frac{k-1}{j-1}}}{k^2(\frac{k-1}{j-1})!}\un_{\{k-1\in (j-1)\NN\}}$ in $L^2$ for every
$t>0$. 
\item $(\rho_{t}^{(j)}(k),\ k\in \NN^* \text{ and } t\geq 0)$ is a
solution to the following coagulation equations:
\begin{equation}
\label{coageqj}
\frac{d}{dt}\rho_t(k) =G_{j}(\rho_t,k)
\end{equation}
where $G_{j}$ is defined by equation \eqref{Gj}. 
\end{enumerate}
\end{prop}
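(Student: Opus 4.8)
The plan is to transfer the arguments of Theorem~\ref{thmsub} and of item~1 of Proposition~\ref{prop:hydrodyn} to the subgraph $\mathcal{G}^{(n,j)}$, the only simplification being that a loop of length $j$ always contributes exactly $j-1$ new vertices to the component exploration. First I would rerun the exploration procedure of Section~\ref{sect:exploration} keeping only loops of length $j$: one checks that the expected number of length-$j$ loops through a fixed vertex before time $nt(\epsilon+1)^j$ converges to $t$ and that each such loop brings $j-1$ previously unseen vertices except on an event of probability $O(1/n)$. Arguing exactly as for Theorem~\ref{thmsub}, this yields, for every sequence $(k_n)_n$,
\[
\Pd\big(|C^{(n,j)}_{nt(\epsilon+1)^j}(x)|\leq k_n\big)-\Pd(\Theta_t\leq k_n)=O\!\Big(\tfrac{k_n^2}{n}\Big),
\]
where $\Theta_t$ denotes the total progeny of a Galton--Watson process with one ancestor and offspring law $\CPois(t,\delta_{j-1})$ (i.e.\ the law of $(j-1)$ times a $\Pois(t)$-distributed variable). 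By Dwass' identity, $\Pd(\Theta_t=k)=\frac1k\Pd\big((j-1)Z_{kt}=k-1\big)$ with $Z_{kt}\sim\Pois(kt)$; this vanishes unless $k-1\in(j-1)\NN$ and otherwise equals $\frac1k e^{-kt}\frac{(kt)^{(k-1)/(j-1)}}{((k-1)/(j-1))!}$, so that $\frac1k\Pd(\Theta_t=k)=\rho^{(j)}_t(k)$.

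For item~1, I would write $\rho^{(n,j)}_{\epsilon,t}(k)=\frac1{nk}\sum_{x\in\KN}\un_{A_x}$ with $A_x=\{|C^{(n,j)}_{nt(\epsilon+1)^j}(x)|=k\}$. The display above gives $\Ed[\rho^{(n,j)}_{\epsilon,t}(k)]=\frac1k\Pd(A_x)\to\rho^{(j)}_t(k)$. For the variance I would expand $\var(\rho^{(n,j)}_{\epsilon,t}(k))=\frac1{n^2k^2}\sum_{x,y}\mathrm{Cov}(\un_{A_x},\un_{A_y})$: the $n$ diagonal terms and the at most $n(k-1)$ ordered pairs with $y\in C^{(n,j)}_{nt(\epsilon+1)^j}(x)$ on $A_x$ contribute $O(1/n)$ after the normalisation, while for $x\neq y$ lying in distinct components one explores first from $x$ and then, on the loops not yet used, from $y$, obtaining $\mathrm{Cov}(\un_{A_x},\un_{A_y})=O(k^2/n)$ uniformly; hence $\var(\rho^{(n,j)}_{\epsilon,t}(k))=O(1/n)\to 0$. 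This covariance estimate is the only technically substantial point: it is a line-by-line copy of the corresponding step in the proof of item~1 of Proposition~\ref{prop:hydrodyn} in Section~\ref{sect:coageq}, and since the restricted exploration is a special case of the one already analysed, no new difficulty arises.

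For item~2 I would verify the ODE directly, using three reformulations. Since $i\rho^{(j)}_t(i)=\Pd(\Theta_t=i)$, the first sum in \eqref{Gj} equals $\frac1j\Pd(\Theta^{(1)}_t+\cdots+\Theta^{(j)}_t=k)$ with $\Theta^{(1)}_t,\dots,\Theta^{(j)}_t$ independent copies of $\Theta_t$; by the $j$-ancestor form of Dwass' identity this is $\frac1k\Pd\big((j-1)Z_{kt}=k-j\big)$, $Z_{kt}\sim\Pois(kt)$. Next, since $\sum_i i\rho^{(j)}_0(i)=1$ (monodisperse initial state), setting $\theta_t=\sum_i i\rho^{(j)}_t(i)$ and $\bar\theta_t=\sum_i i(\rho^{(j)}_0(i)-\rho^{(j)}_t(i))=1-\theta_t$, the binomial theorem collapses the last two terms of \eqref{Gj} to $-k\rho^{(j)}_t(k)(\theta_t+\bar\theta_t)^{j-1}=-k\rho^{(j)}_t(k)=-\Pd(\Theta_t=k)$. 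Hence, for $k\geq j$ with $k-1\in(j-1)\NN$ and $m=\frac{k-1}{j-1}$,
\[
G_j(\rho^{(j)}_t,k)=\frac{e^{-kt}}{k}\Big(\frac{(kt)^{m-1}}{(m-1)!}-\frac{(kt)^{m}}{m!}\Big)=\frac{(kt)^{m-1}e^{-kt}}{k\,m!}\,(m-kt),
\]
which is exactly $\tfrac{d}{dt}\rho^{(j)}_t(k)$ read off the closed form; for $k=1$ both sides equal $-e^{-t}$, and when $k-1\notin(j-1)\NN$ both sides vanish. All series involved are dominated by first moments $\leq 1$, so the computation remains valid in the supercritical range $(j-1)t\geq 1$ where $\theta_t<1$. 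Thus the genuine work is confined to the covariance bound of item~1; everything else is the restriction of the general exploration argument plus the bookkeeping above, whose pivots are $i\rho^{(j)}_t(i)=\Pd(\Theta_t=i)$, Dwass' identity with one and with $j$ ancestors, and $\theta_t+\bar\theta_t=1$.
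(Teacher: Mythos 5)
Your proposal is correct and takes essentially the same route as the paper's proof in Appendix~\ref{section:fixedlength}: restrict the exploration to length-$j$ loops, compare with the Galton--Watson process with offspring law $(j-1)\cdot\Pois(t)$, use Dwass's identity with $1$ and $j$ ancestors to obtain the closed form of $\rho^{(j)}_t$, deduce $L^2$ convergence from the analogues of Theorem~\ref{thmsub} and Corollary~\ref{coroltwocomponents}, and verify the ODE through the decomposition $G_j(\rho_t,k)=\tfrac1j\Pd(T^{(j,j)}_t=k)-k\rho_t(k)$. The only difference is cosmetic: you spell out the binomial collapse $(\theta_t+\bar\theta_t)^{j-1}=1$ and the final differentiation of the explicit formula in full, whereas the paper invokes these computations by reference to Section~\ref{sect:coageq}.
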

The study of the random graph process defined by the Poisson ensemble of loops of a fixed length is postponed to Appendix \ref{section:fixedlength}. 
\section{Component exploration procedure and associated Galton-Watson process \label{sect:exploration}}
In this section, we describe a component exploration procedure modeled on the Karp \cite{Karp} and Martin-L\"of \cite{MartinLof} exploration algorithm. The aim of this  procedure is  to find $C^{(n)}_t(x)$ and to construct a  Galton-Watson process, the total population size of which bounds the size of the component $|C^{(n)}_t(x)|$. 
\subsection{Component exploration procedure} 
For every  subset of vertices $V$ let $\NL_{t}(V)$ denote the subset of
loops before time $t$ included in $V$ and  let $\NL_{t,x}(V)$ denote  the subset of those
that also pass through $x$. Let  define the set of `neighbours' of $x$ in $V$ as
$$\mathcal{N}_{t,x}(V)=\{y\in V\setminus \{x\},\; \exists \ell \in\NL_{t,x}(V)\;\text{that
passes through}\;y\}.$$ 

In each step of the algorithm, a vertex is either \emph{active}, \emph{explored}
or \emph{neutral}. Let $A_k$ and $H_k$ be the sets of active vertices and
explored vertices in step $k$ respectively. In step $0$, vertex $x_1=x$ is said
to be active ($A_0=\{x_1\}$) and other vertices are neutral. In step 1, every
neighbour is declared active and the vertex $x$ is said to be an explored
vertex: 
$A_{1}=\mathcal{N}_{t,x}(\KN)$ and  $H_1=\{x_1\}$. In step $k$, let us assume
that  $A_{k-1}$ is non-empty. Let $x_k$ denote  the smallest active vertex in
$A_{k-1}$. We add the neutral vertices $z\in \mathcal{N}_{t,x_k}(\KN\setminus
H_{k-1})$ to $A_{k-1}$ and change the status of $x_k$: $A_{k}=A_{k-1}\cup
\mathcal{N}_{t,x_k}(\KN\setminus H_{k-1})\setminus\{x_k\}$ and
$H_{k}=H_{k-1}\cup\{x_k\}$. In particular, $|A_k| = |A_{k-1}| + \xi^{(n)}_{t,k}
-1$ with $\xi^{(n)}_{t,k} = |\mathcal{N}_{t,x_k}(\KN\setminus H_{k-1})\setminus
A_{k-1}|$. The process stops in step $T^{(n)}_{t}=\min(k,\; A_k=\emptyset)$.  By
construction, $T^{(n)}_{t}=\min(k,\; \sum_{i=1}^{k}\xi^{(n)}_{t,i}\leq k-1)$,
the component of $x$ is $C^{(n)}_t(x)=H_{T^{(n)}_{t}}$ and its size is
$T^{(n)}_{t}$ (an example is presented in Figure \ref{algocomp}).
\begin{figure}[h]
\fbox{
 \begin{minipage}{0.4\textwidth}
  \includegraphics*[scale=0.5]{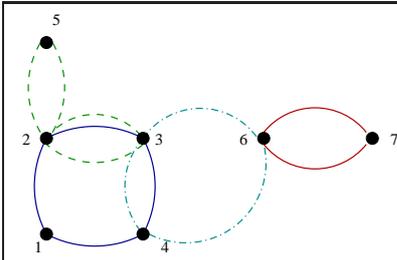}
\caption{\label{algocomp}a component of $\DL_{t}^{(n)}$ formed by four  based loops ${\ell_1=(1,2,3,4)}$, ${\ell_2=(2,5,2,3)}$, ${\ell_3=(3,6,4)}$ and  ${\ell_4=(6,7)}$. }
 \end{minipage}\quad
{\small
\begin{minipage}{0.55\textwidth}
The steps of exploration procedure for this component are
\begin{itemize}
\item Step 1: $x_{1}=1$ and $A_{1}=\{2,3,4\}$ so that $\xi^{(n)}_{t,1}=3$.
\item Step 2: $x_{2}=2$ and  $A_{2}=\{3,4,5\}$ so that $\xi^{(n)}_{t,2}=1$.
\item Step 3: $x_{3}=3$ and  $A_{3}=\{4,5,6\}$ so that $\xi^{(n)}_{t,3}=1$.
\item Step 4: $x_{4}=4$ and  $A_{4}=\{5,6\}$ so that $\xi^{(n)}_{t,4}=0$.
\item Step 5: $x_{5}=5$ and $A_{5}=\{6\}$ so that $\xi^{(n)}_{t,5}=0$.
\item Step 6: $x_{6}=6$ and  $A_{6}=\{7\}$ so that $\xi^{(n)}_{t,6}=1$.
\item Step 7: $x_{7}=7$ and  $A_{7}=\emptyset$ so that $\xi^{(n)}_{t,7}=0$.
\end{itemize}
\end{minipage}
}}
\end{figure}

\subsection{The Galton-Watson process associated to a component}
 The random variable $\xi^{(n)}_{t,k}$ is bounded above by 
$$\zeta^{(n,1)}_{t,k}=\sum_{\ell \in\NL_{t,x_k}(\KN\setminus H_{k-1})}\!\!\!(|\ell|-1)$$ 
in which a same vertex is counted as many times as it appears in loops $\ell \in \NL_{t,x_k}(\KN\setminus H_{k-1})$.  To obtain identically distributed random variables in each step, we have to consider also in step $k$,  loops that pass through $x_k$ and $H_{k-1}$ before time $t$. Let denote this set of loops  $\NL_{t,x_k,H_{k-1}}$. 
We define   $\zeta^{(n,2)}_{t,k}=\sum_{\ell
\in\NL_{t,x_k,H_{k-1}}}\!\!\!(|\ell|-1)$  and set   
$$\zeta^{(n)}_{t,k}=\zeta^{(n,1)}_{t,k}+\zeta^{(n,2)}_{t,k}=\sum_{\ell \in\NL_{t,x_k}(\KN)}(|\ell|-1).$$ 
The distribution of $\zeta^{(n)}_{t,k}$   is 
the  compound Poisson distribution
$\CPois(t\beta_{n,\epsilon},\nu_{n,\epsilon})$  with
\begin{multline*}\beta_{n,\epsilon}=\mu(\{\ell\in\DL(\KN), x\in
\ell\})=\mu(\DL(\KN))-\mu(\DL(\KN\setminus \{x\}))
=\log(1+\frac{1}{n\epsilon})-\frac{1}{n(\epsilon+1)}.
\end{multline*}
 $$\nu_{\epsilon,
n}(j)=\frac{1}{\beta_{n\epsilon}}\mu(\{\ell\in\mathcal{DL}(\KN),\
x\in\ell\;\text{and}\;|\ell|=j+1\})=\frac{1-(1-\frac{1}{n})^{j+1}}{\beta_{n,
\epsilon}(j+1)(\epsilon+1)^{j+1}}\quad \forall j\in\NN^*.$$
\begin{ex}
 For the component drawn in Figure \ref{algocomp}, the random variables associated with the 
 first three steps of the exploration procedure are $\zeta^{(n,1)}_{t,1}=3$,  
 $\zeta^{(n,2)}_{t,1}=0$, $\zeta^{(n,1)}_{t,2}=3$,  $\zeta^{(n,2)}_{t,2}=3$,  
 $\zeta^{(n,1)}_{t,3}=2$ and $\zeta^{(n,2)}_{t,3}=6$.
\end{ex}
Let $\mathcal{F}_k=\sigma(H_j, A_j,\; j\leq k)$. 
Let us note that the random variables $\zeta^{(n)}_{t,j}$ and $\zeta^{(n)}_{t,k}$ for $j<k$ 
are not independent since a same loop can belong to $\DL_{t,x_k,H_{k-1}}$ and $\DL_{t,x_j,H_{j-1}}$. 
Nevertheless, as disjoint subsets of loops in
$\DL^{(n)}_t$ are independent, 
the random variables $\zeta^{(n,1)}_{t,j}$ for $j\leq k$ are independent conditionally on 
$\mathcal{F}_{k}$, and the random variable $\zeta^{(n,1)}_{t,k}$ is
independent of  $\zeta^{(n,2)}_{t,k}$ conditionally on $\mathcal{F}_{k}$. 
 Therefore, by using independent copies of the Poisson point processes $\DL$,  
 we can construct  a sequence of nonnegative random variables  
 $(\bar{\zeta}^{(n,2)}_{t,k})_k$ such that for
every $k$:
\begin{itemize}
\item $\bar{\zeta}^{(n,2)}_{t,k}$ has the same distribution as
$\zeta^{(n,2)}_{t,k}$ and is independent of $\zeta^{(n,1)}_{t,k}$ conditionally
on $\mathcal{F}_k$. 
 \item $\bar{\zeta}^{(n)}_{t,k}=\zeta^{(n,1)}_{t,k}+\bar{\zeta}^{(n,2)}_{t,k}$
are independent with distribution 
$\CPois(\beta_{n,\epsilon}t,\nu_{n,\epsilon})$. 
\end{itemize}
Set $\bar{T}^{(n)}_t=\min(k,\
\bar{\zeta}^{(n)}_{t,1}+\ldots+\bar{\zeta}^{(n)}_{t,k}=k-1)$. By construction,
$\bar{T}^{(n)}_t\geq |C^{(n)}_{t}(x)|$. 
If $\bar{\zeta}^{(n)}_{t,1}$ is seen as the number of offspring of an individual
$I$ and $\bar{\zeta}^{(n)}_k$ for $k\geq 2$ as the number of offspring of the
$k$-th individual explored by a breadth-first algorithm of the family tree of $I$,
then $\bar{T}^{(n)}_t$ is the total number of individuals in the family tree of
$I$. We call  $(\bar{\zeta}^{(n)}_{t,k})_k$  the associated Galton-Watson
process (a bijection between  Galton-Watson trees and  lattice  walks was 
described by T.~E.~Harris~\cite{Harris52} in Section 6, 
see also Section 6.2 in \cite{PitmanLN} for a review).
\section{Approximation of component sizes\label{sect:proofth}}
The number of neighbours of a vertex  is used to approximate the number of vertices added in each 
step of the exploration process of a component. We begin this section by studying its asymptotic distribution.  
Next, we prove Theorem \ref{thmsub}. Its proof is divided into two steps:  we  give an upper bound of 
the deviation between the cumulative distribution function of 
$|C^{(n)}_{nt}(x)|$ and of the total population size of the  associated Galton-Watson process and then 
we study the asymptotic distribution of the  Galton-Watson process associated to $|C^{(n)}_{nt}(x)|$.
We end this section by a proof of Corollary \ref{coroltwocomponents}. 
\subsection{Neighbours of a vertex}
Let $V_n$ be a subset of vertices in $\bar{K}_n$ and let $x$ be another vertex. 
The aim of this section is to show that the 
number of neighbours of $x$ in $\KN\setminus V_n$ at time $nt$ (denoted by $|\Vois_{nt,x}(\KN\setminus V_n)|$)
converges in distribution to the compound Poisson distribution $\CPois(\frac{t}{\epsilon(\epsilon+1)},\Geo)$ if
$\frac{V_n}{n}$ tends to $0$. \\
The 
number of neighbours of $x$ in $\KN\setminus V_n$ at time $t$ is equal to 
$\sum_{\ell\in\DL_{t,x}(\KN\setminus V_n)}(|\ell|-1)$ except if there are loops 
in $\DL_{t,x}(\KN\setminus V_n)$ that cross a same vertex several times or that cross another 
loop of $\DL_{t,x}(\KN\setminus V_n)$ in a vertex $y\neq x$. The following lemma yields an upper
bound for the probability that such an event occurs:
\begin{lem}
\label{lemrecoupe}
Let $x$ be a vertex. Set  $G_{n,t}$ be the event  `\emph{there exists a loop in
$\NL_{t,x}(\KN)$ that crosses a same vertex several times or that
intersects another loop  in $\NL_{t,x}(\KN)$ at a vertex $y\neq x$.}'
$$
\Pd(G_{n,t})\leq \frac{t}{n^2\epsilon^3}(\epsilon+1)+\frac{t^2}{n^3\epsilon^4}. 
$$
\end{lem}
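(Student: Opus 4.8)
The plan is to bound $\Pd(G_{n,t})$ by a first-moment (union bound) argument over the "bad" configurations of loops, using the explicit form of the loop measure $\mu$ and the fact that the Poisson loop set $\DL_{t,x}(\KN)$ restricted to loops through $x$ has intensity $t\mu(\{\ell : x\in\ell, |\ell|=\cdot\})$. The event $G_{n,t}$ decomposes into two sub-events: (A) there is a loop in $\NL_{t,x}(\KN)$ that visits some vertex at least twice (counting $x$ with multiplicity $\geq 3$, or some $y\neq x$ with multiplicity $\geq 2$), and (B) there are two distinct loops in $\NL_{t,x}(\KN)$ sharing a vertex $y\neq x$ (both necessarily also passing through $x$). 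I would estimate $\Pd(A)$ and $\Pd(B)$ separately and add them.

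\textbf{Estimating $\Pd(A)$.} For a single loop through $x$ of length $k$, the $\mu$-mass is $\frac{1}{k(n(\epsilon+1))^k}$ times the number of based-loop representatives through $x$. A loop through $x$ that additionally has a repeated vertex must involve at least one extra coincidence among its $k$ vertices; the count of such based loops of length $k$ is $O(k^2 n^{k-2})$ (choose the loop's vertices with one forced repetition). Summing $t\cdot\frac{1}{k(n(\epsilon+1))^k}\cdot O(k^2 n^{k-2})$ over $k\geq 2$ gives a geometric-type series in $\frac{1}{\epsilon+1}$ with an overall factor $\frac{t}{n^2}$; carrying out the summation carefully yields a bound of the form $\frac{t}{n^2\epsilon^3}(\epsilon+1)$ (the $\epsilon^{-3}$ comes from $\sum_k k\,(\epsilon+1)^{-k}$ type sums combined with the $n\epsilon$ normalization of the killing). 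Since $\Pd(A)\leq \Ed[\#\text{such loops}] = t\mu(\{\text{loops through }x\text{ with a repetition}\})$, this is exactly the first-moment bound.

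\textbf{Estimating $\Pd(B)$ and assembling.} Two distinct loops both through $x$ and both through a common $y\neq x$: by independence of $\mu$-restrictions, $\Pd(B)\leq \Ed[\#\text{such ordered pairs}]/2 \leq \frac{t^2}{2}\big(\sum_{y\neq x}\mu(\{\ell: x,y\in\ell\})\big)^2$, or more simply $\frac{t^2}{2}\mu(\{\ell : x\in\ell\})\cdot\max_y\mu(\{\ell: x,y\in\ell\})\cdot(\text{combinatorial factor})$; one shows $\sum_{y\neq x}\mu(\{\ell : x, y \in \ell\})$ is $O(\frac{1}{n\epsilon^2})$ and $\mu(\{\ell: x\in\ell\})=\beta_{n,\epsilon}=O(\frac{1}{n\epsilon})$, giving $\Pd(B) = O(\frac{t^2}{n^3\epsilon^4})$, matching the claimed $\frac{t^2}{n^3\epsilon^4}$. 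Adding the two contributions gives the stated bound. The main obstacle is the bookkeeping in $\Pd(A)$: one must carefully enumerate the based loops of each length $k$ that pass through $x$ and have at least one self-intersection (distinguishing the case where $x$ itself is visited $\geq 3$ times from the case of a repeated $y\neq x$), keep track of the $\frac1k$ weight and the $(n(\epsilon+1))^{-k}$ factor, and sum the resulting series so that the $n$- and $\epsilon$-powers land exactly as in the statement rather than merely as $O(\cdot)$.
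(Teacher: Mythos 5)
Your event decomposition is genuinely different from the paper's, which splits $G_{n,t}$ into $G^{(1)}_{n,t}$ (some vertex $y\neq x$ is visited at least twice in total by the loops of $\NL_{t,x}(\KN)$) and $G^{(2)}_{n,t}$ (a single loop in $\NL_{t,x}(\KN)$ visits $x$ at least twice). For $G^{(1)}$ the paper conditions on $S_{t,x}$, the total number of non-$x$ visits by loops through $x$: conditionally these visits are i.i.d.\ uniform on $\KN\setminus\{x\}$, so a birthday bound gives $\Pd(G^{(1)}_{n,t})\le\frac{1}{2(n-1)}\Ed(S_{t,x}(S_{t,x}-1))$, with the second factorial moment read off an explicit generating function for $S_{t,x}$ obtained from Campbell's formula. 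This handles ``one loop revisits a $y\neq x$'' and ``two loops share a $y\neq x$'' in one stroke with exact constants; your split into (A) (a single loop with any repeated vertex) and (B) (two loops sharing a $y\neq x$) requires two separate first-moment counts and is somewhat fiddlier.

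Your (A)+(B) route could be made to work, but your bound on $\Pd(B)$ is wrong. You write $\Pd(B)\le\frac{t^2}{2}\bigl(\sum_{y\neq x}\mu(\{\ell:x,y\in\ell\})\bigr)^2$; with $\sum_{y\neq x}\mu(\{\ell:x,y\in\ell\})=O(\frac{1}{n\epsilon^2})$ as you state, this gives $O(\frac{t^2}{n^2\epsilon^4})$, off by a factor of $n$ from the claimed $\frac{t^2}{n^3\epsilon^4}$. The correct estimate is a sum of squares, not a square of a sum: letting $N_y$ denote the (Poisson) number of loops of $\DL_{nt,x}(\KN)$ that also pass through $y$, one has
$\Pd(B)\le\sum_{y\neq x}\Pd(N_y\ge 2)\le\frac{t^2}{2}\sum_{y\neq x}\mu(\{\ell:x,y\in\ell\})^2 \le\frac{t^2}{2}\max_{y\neq x}\mu(\{\ell:x,y\in\ell\})\sum_{y\neq x}\mu(\{\ell:x,y\in\ell\})$,
and $\max_{y\neq x}\mu(\{\ell:x,y\in\ell\})=O(\frac{1}{n^2\epsilon^2})$ (Lemma~\ref{lemintersect} with $A=\{y\}$), giving $O(\frac{t^2}{n^3\epsilon^4})$. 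Your fallback expression $\frac{t^2}{2}\mu(\{\ell:x\in\ell\})\cdot\max_y\mu(\{\ell:x,y\in\ell\})\cdot(\text{combinatorial factor})$ only recovers the right power of $\epsilon$ if the ``combinatorial factor'' is tacitly $O(1/\epsilon)$, which is not combinatorial. Smaller issues: ``crossing $x$ several times'' means multiplicity $\geq 2$, not $\geq 3$; the count of based loops of length $k$ through $x$ with a repeated vertex is $O(k^3 n^{k-2})$, not $O(k^2 n^{k-2})$ (the $1/k$ cyclic weight in $\mu$ brings the power of $k$ back down); and, as you yourself flag, the (A) summation is left undone --- carrying it out with corrected combinatorics does produce $O(\frac{t(\epsilon+1)}{n^2\epsilon^3})$, but the proposal as written does not establish it.
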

\begin{proof}
 We study separately the following two events:
\begin{itemize}
 \item 
 $G^{(1)}_{n,t}$ :`\emph{there exists a vertex $y\neq x$ which is crossed
several times by  loops in $\NL_{t,x}(\KN)$}' 
\item $G^{(2)}_{n,t}$: `\emph{there exists a loop in $\NL_{t,x}(\KN)$ that crosses $x$ several times}'.
\end{itemize}
To compute  $\Pd(G^{(1)}_{n,t})$, we introduce the random variable $S_{t,x}$
as the total length of loops in $\NL_{t,x}(\KN)$ minus the number of
times these loops pass through $x$: 
$S_{t,x}=\sum_{\ell\in \NL_{t,x}(\KN)}M_x(\ell)$ where  $M_x(\ell)$
denotes the number of vertices different from $x$ in a loop $\ell$. Since the
vertices that form a loop are chosen independently with the uniform distribution
on $\KN$,
\[\Pd(G^{(1)}_{n,t})=1-\Ed(\prod_{i=0}^{S_{t,x}-1}(1-\frac{i}{n-1}))\leq
\frac{1}{2(n-1)}\Ed(S_{t,x}(S_{t,x}-1)).\] 
 By Campbell's formula, the probability-generating function of $S_{t,x}$ is
\[\Ed( u^{S_{t,x}})=\exp\Big(\sum_{\ell\in \NL_{t,x}(\KN)}(u^
{M_x(\ell)}-1)t\mu(\ell)\Big).\]  The $\mu$-measure of loops in
$\NL_{t,x}(\KN)$ of length $j$ that cross $i$ times the vertex $x$ 
is $\displaystyle{ \binom{j}{i}\frac{(n-1)^{j-i}}{j(n(\epsilon+1))^j}}$. Using the binomial
formula, we obtain that 
$\sum_{\ell\in \NL_{t,x}(\KN)}(u^{M_x(\ell)}-1)\mu(\ell)$ is equal to:
\begin{multline*}\sum_{j=2}^{+\infty}\sum_{i=1}^{j}(u^{j-i}-1)\binom{j}{i
}\frac{(n-1)^{j-i}}{j(n(\epsilon+1))^j}\\
=\sum_{j=2}^{+\infty}\frac{1}{j(n(\epsilon+1))^j}
\Big((u(n-1)+1)^j-u^j(n-1)^j-n^j+(n-1)^j\Big)\\
=-\log(1-\frac{1}{n\epsilon})-\log(1-\frac{
u(n-1)+1}{n(\epsilon+1)})+\log(1-\frac{u(n-1)}{n(\epsilon+1)}).
\end{multline*}
Therefore, $\Ed(
u^{S_{t,x}})=(1+\frac{1}{n\epsilon})^{-t}(1-\frac{1}{n(\epsilon+1)-u(n-1)})^
{-t}$. In particular, 
$$\Ed(S_{t,x}(S_{t,x}-1))=\frac{t(n-1)^2(2n\epsilon+t+1)}{
(n\epsilon)^2(n\epsilon+1)^2}.$$ 
Thus $\Pd(G^{(1)}_{n,t,k})\leq
\frac{t(n\epsilon+t)}{n^3\epsilon^4}.$\\
To study $G^{(2)}_{n,t}$, we set  $N_x(\ell)$  the number of times a loop
$\ell$ passes through the vertex $x$. We have
 $\Pd(G^{(2)}_{n,t})=1-\exp(-t\mu(\ell\in \DL(\KN),\ N_x(\ell)\geq
2))$. 
We have already seen in Lemma \ref{lemintersect} that   
$$\mu(\ell\in \DL(\KN),\ N_x(\ell)\geq
1)=\log(1+\frac{1}{n\epsilon})-\frac{1}{n(\epsilon+1)}.$$
Finally $$\mu(\ell\in \DL(\KN),\
N_x(\ell)=1)=\sum_{j=2}^{+\infty}\frac{1}{j}\frac{j(n-1)^{j-1}}{(n(\epsilon+1))^j}
=\frac{n-1}{n(\epsilon+1)(n\epsilon+1)}=\frac{1}{n\epsilon+1}-\frac{1}{n(\epsilon+1)}.$$
Therefore, $\Pd(G^{(2)}_{n,t})\leq t\mu(\ell\in \DL(\KN),\
N_x(\ell)\geq 2)=
t\Big(\log(1+\frac{1}{n\epsilon})-\frac{1}{n\epsilon+1}\Big)\leq \frac{t}{n\epsilon(n\epsilon+1)}.$
\end{proof}
The distribution of $\sum_{\ell\in\DL_{t,x}(\KN\setminus V_n)}(|\ell|-1)$ is described in the following 
proposition: 
\begin{prop}
\label{TVlengthneig}
 Let $V_n$ be a subset of vertices and let $x$ be another vertex. 
 \begin{itemize}
 \item[(i)] The random variable $\sum_{\ell\in\DL_{x,nt}(\KN\setminus V_n)}(|\ell|-1)$
  is $\CPois(ntb_{n},\nu_{n})$-distributed where:
  $$b_n=-\frac{1}{n(\epsilon+1)}+\log(1+\frac{1}{n\epsilon+|V_n|}) \et  
  \nu_n(j)=\frac{(1-\frac{|V_n|}{n})^{j+1}-
  (1-\frac{|V_n|+1}{n})^{j+1}}{b_n(j+1)(\epsilon+1)^{j+1}}\;\forall j\in\NN^*.$$ 
\item[(ii)]  
 $\TV\left(\CPois(ntb_{n},\nu_{n}),\CPois\big(\frac{t}{\epsilon(\epsilon+1)},\Geo\big)\right)
 \leq \frac{3t}{2\epsilon^2}\Big(\frac{|V_n|}{n}+\frac{1}{2n}\Big).$ 
 \end{itemize}
 \end{prop}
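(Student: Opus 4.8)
The plan is to prove the two parts of Proposition \ref{TVlengthneig} in turn, both by direct computation with the Poisson point process of loops.

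For part (i), I would compute the intensity measure of loops of length $k\geq 2$ that pass through $x$ and avoid $V_n$. Such a loop is an element of $(\KN\setminus V_n)^k$ that contains $x$ at least once, and $\mu$ assigns it weight $\frac{1}{k(n(\epsilon+1))^k}$; the number of based loops of length $k$ on $\KN\setminus V_n$ that pass through $x$ is $(n-|V_n|)^k-(n-|V_n|-1)^k$ (all loops on $n-|V_n|$ vertices minus those avoiding $x$ as well). Summing $\frac{(n-|V_n|)^k-(n-|V_n|-1)^k}{k(n(\epsilon+1))^k}$ over $k\geq 2$ and using $\sum_{k\geq 1}\frac{z^k}{k}=-\log(1-z)$ gives, after subtracting off the $k=1$ term as in the computation of $\beta_{n,\epsilon}$ in Section \ref{sect:exploration}, exactly $b_n=-\frac{1}{n(\epsilon+1)}+\log(1+\frac{1}{n\epsilon+|V_n|})$. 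Since distinct loops occur as independent Poisson processes and $\sum_{\ell}(|\ell|-1)$ decomposes as a sum over loops, a superposition/thinning argument (Campbell's formula, exactly as used in the proof of Lemma \ref{lemrecoupe}) shows the total is $\CPois(ntb_n,\nu_n)$ where $\nu_n(j)$ is the $b_n$-normalised $\mu$-mass of length-$(j+1)$ loops through $x$ in $\KN\setminus V_n$; isolating the length-$(j+1)$ term of the above sum gives the stated $\nu_n(j)$.

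For part (ii), the strategy is the standard coupling bound for total variation distance between compound Poisson laws: if $\CPois(\lambda_1,\nu_1)$ and $\CPois(\lambda_2,\nu_2)$ are two compound Poisson distributions, then $\TV(\CPois(\lambda_1,\nu_1),\CPois(\lambda_2,\nu_2))\leq \TV(\lambda_1\nu_1,\lambda_2\nu_2)$ where the right side is the total variation distance of the two (sub-probability/positive) measures on $\NN^*$, i.e.\ $\frac12\sum_{j\geq 1}|\lambda_1\nu_1(j)-\lambda_2\nu_2(j)|+\frac12|\lambda_1-\lambda_2|$ (this follows from writing each compound Poisson process as a Poisson point process on $\RR_+\times\NN^*$ and coupling the two Poisson measures so they agree on the common-intensity part). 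Here $\lambda_1\nu_1(j)=ntb_n\nu_n(j)=\frac{nt}{(\epsilon+1)^{j+1}}\cdot\frac{(1-\frac{|V_n|}{n})^{j+1}-(1-\frac{|V_n|+1}{n})^{j+1}}{j+1}$ and $\lambda_2\nu_2(j)=\frac{t}{\epsilon(\epsilon+1)}\cdot\frac{\epsilon}{(\epsilon+1)^j}\cdot(\tfrac{1}{\epsilon+1})^{j-1}\cdot\dots$ — more simply, $\frac{t}{\epsilon(\epsilon+1)}$ times the geometric mass $(\frac{1}{\epsilon+1})^{j-1}\frac{\epsilon}{\epsilon+1}=\frac{\epsilon}{(\epsilon+1)^j}$, i.e.\ $\frac{t}{(\epsilon+1)^{j+1}}$. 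So the $j$th term is $\frac{t}{(\epsilon+1)^{j+1}}$ times $\big|n\big((1-\tfrac{|V_n|}{n})^{j+1}-(1-\tfrac{|V_n|+1}{n})^{j+1}\big)-1\big|$, and the remaining work is to bound this quantity and sum the geometric series $\sum_{j\geq 1}(\epsilon+1)^{-(j+1)}=\frac{1}{\epsilon(\epsilon+1)}$, together with the $|\lambda_1-\lambda_2|=nt|b_n-\frac{1}{n\epsilon(\epsilon+1)}|$ term, to reach the bound $\frac{3t}{2\epsilon^2}(\frac{|V_n|}{n}+\frac{1}{2n})$.

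The main obstacle is the elementary but slightly delicate estimate showing that $n\big((1-\tfrac{v}{n})^{m}-(1-\tfrac{v+1}{n})^{m}\big)$ is close to $1$ with an error controlled uniformly in $m$ by something like $O(m(v+1)/n)$ — and then checking that after multiplying by $(\epsilon+1)^{-(m)}$ and summing over $m=j+1\geq 2$ the $m$-dependent factor does not spoil geometric convergence, and that all the constants combine to give the clean coefficient $\frac{3}{2\epsilon^2}$. Concretely one writes $(1-\tfrac{v}{n})^m-(1-\tfrac{v+1}{n})^m=\tfrac{1}{n}\int (\dots)$ via the mean value theorem or via the identity $a^m-b^m=(a-b)\sum_{i<m}a^ib^{m-1-i}$ with $a-b=\tfrac1n$, giving $n(a^m-b^m)=\sum_{i=0}^{m-1}a^ib^{m-1-i}$, which equals $m$ plus a correction of order $m^2 v/n$; combined with $\frac{1}{n\epsilon(\epsilon+1)}-b_n=O(\frac{|V_n|+1}{n^2\epsilon^2})$ (from a Taylor expansion of the logarithm in $b_n$), careful bookkeeping of these corrections against $\sum_m m(\epsilon+1)^{-m}$ and $\sum_m m^2(\epsilon+1)^{-m}$ yields the claimed inequality. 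Everything else is routine generating-function manipulation already illustrated in the proof of Lemma \ref{lemrecoupe}.
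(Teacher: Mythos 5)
Your proposal follows the same route as the paper: part (i) is the direct computation of the $\mu$-mass of loops through $x$ inside $\KN\setminus V_n$ (the paper records $b_n=\mu(\ell\in\DL(\KN\setminus V_n),\ x\in\ell)$ and $\nu_n(j)$ as the normalised length-$(j+1)$ mass, exactly your generating-function computation), and part (ii) rests on a compound-Poisson coupling inequality followed by the same arithmetic with $\sum_k (\epsilon+1)^{-k}$. The one substantive divergence is the form of the coupling bound. The paper proves (Lemma~\ref{lem_couplingCP}), by conditioning on the Poisson count and matching marks pairwise,
\[
\TV\big(\CPois(\lambda_1,p_1),\CPois(\lambda_2,p_2)\big)\le 1-e^{-(\lambda_2-\lambda_1)}+\lambda_1\,\TV(p_1,p_2),\qquad \lambda_1\le\lambda_2,
\]
whereas you invoke $\tfrac12\sum_j|\lambda_1\nu_1(j)-\lambda_2\nu_2(j)|+\tfrac12|\lambda_1-\lambda_2|$. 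Beware: the \emph{naive} maximal coupling of the two Poisson point processes on $\NN^*$ only gives the bound $\sum_j|\lambda_1\nu_1(j)-\lambda_2\nu_2(j)|$ (no factor $\tfrac12$ on that term), and tracing that through your estimates yields the constant $\frac{2t}{\epsilon^2}$, not the stated $\frac{3t}{2\epsilon^2}$. Your sharper inequality is in fact true, but it needs a less obvious splitting than you describe --- e.g.\ realise $\CPois(\lambda_2,\nu_2)$ as the independent sum of $\CPois(\lambda_1,\tilde\nu_2)$ and $\CPois(\lambda_2-\lambda_1,\nu_3)$ with $\nu_3$ proportional to $(\lambda_2\nu_2-\lambda_1\nu_1)^+$ and $\tilde\nu_2=\frac{\lambda_2\nu_2-(\lambda_2-\lambda_1)\nu_3}{\lambda_1}$, then couple mark by mark --- or simply switch to the paper's Lemma~\ref{lem_couplingCP}, which yields the same final constant. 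The remaining bookkeeping you outline (the uniform-in-$m$ estimate for $n\big((1-\frac{v}{n})^m-(1-\frac{v+1}{n})^m\big)-1$ summed against $(\epsilon+1)^{-m}$, plus a one-term Taylor bound on $\frac{1}{n\epsilon(\epsilon+1)}-b_n$) is precisely what the paper carries out via its explicit two-sided bound on $nb_n a_{k,n}$, so your plan is sound once the coupling lemma is pinned down.
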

\begin{proof}
\begin{itemize}
\item[(i)] By definition of the Poisson loop set, $\sum_{\ell\in\DL_{x,nt}(\KN\setminus V_n)}(|\ell|-1)$ 
has a compound Poisson distribution, $b_n=\mu(\ell\in\DL(\KN\setminus V_n),\ x\in\ell)$ and for every 
$j\in\NN^*$,
$$\nu_n(j)=\frac{1}{b_n}\mu(\ell\in\DL(\KN\setminus V_n),\ x\in\ell \et |\ell|=j+1).$$ 
\item[(ii)]
The total variation distance between two compound Poisson distributions can be bounded as follows
using coupling arguments:
\begin{lem}
\label{lem_couplingCP}
 Let $p_1$ and $p_2$ be two probability measures on $\NN$ and let $\lambda_1$
and $\lambda_2$ be two positive reals such that $\lambda_1< \lambda_2$. Then 
$$\TV(\CPois(\lambda_1,p_1),\CPois(\lambda_2,p_2))\leq
1-e^{-(\lambda_2-\lambda_1)}+\lambda_1\TV(p_1,p_2).$$
\end{lem}
\begin{proof}[Proof of Lemma \ref{lem_couplingCP}]
By Strassen's theorem, there exist two independent sequences $(X_i)_{i\in
\NN^*}$ and $(Y_i)_{i\in \NN^*}$  of \iid random variables  with distributions $p_1$ and
$p_2$ respectively such that ${\TV(p_1,p_2)=P(X_i\neq Y_i)}$ for every $i\in\NN\*$. Let
$Z_1$ and $Z_2$ be two independent Poisson-distributed random variables with
parameters $\lambda_1$ and $\lambda_2-\lambda_1$ respectively which  are
independent of the two sequences $(X_i)_i$ and $(Y_i)_i$. Set $Z=Z_1+Z_2$. Then
$$\Pd(\sum_{i=1}^{Z_1}X_i\neq \sum_{i=1}^{Z}Y_i)\leq
\Pd(Z_2>0)+\Pd(\sum_{i=1}^{Z_1}X_i\neq \sum_{i=1}^{Z_1}Y_i)$$ and
$$\Pd(\sum_{i=1}^{Z_1}X_i\neq \sum_{i=1}^{Z_1}Y_i)\leq
\sum_{k=0}^{+\infty}\Pd(Z_1=k)\sum_{i=1}^{k}\Pd(X_i\neq Y_i)=\Ed(Z_1)\TV(p_1,p_2).$$
\end{proof}
We apply Lemma \ref{lem_couplingCP} with $\lambda_1=nb_{n}t$, 
$\lambda_2=\frac{t}{\epsilon(\epsilon+1)}$, $p_1=\nu_{n}$ and
$p_2=\Geo$; 
We have: 
$nb_n\leq \frac{1}{\epsilon(\epsilon+1)}$ and      
$nb_n\geq \frac{1}{\epsilon(\epsilon+1)}-\frac{1}{\epsilon^2}\Big(\frac{|V_n|}{n}+\frac{1}{2n}\Big)$. 
By definition, 
$$\TV(p_1,p_2)=\frac{\epsilon}{2}\sum_{k=1}^{+\infty}(\epsilon+1)^{-k}|a_{k,n}|$$
with $$
a_{k,n}=1-\frac{1}{(k+1)\epsilon(\epsilon+1)b_{n}}
\Big(\big(1-\frac{|V_n|}{n}\big)^{k+1}-\big(1-\frac{|V_n|+1}{n}\big)^{k+1}\Big).$$
As, for $x\in[0,1]$ and $k\in\NN^*$, $$(1-x)^{k+1}-(1-x-\frac{1}{n})^{k+1}\leq \frac{k+1}{n}\;\et\; 
1-\frac{n}{k+1}\Big((1-x)^{k+1}-(1-x-\frac{1}{n})^{k+1}\Big)\leq k(x+\frac{1}{2n}),$$ 
 we obtain:
$$\forall k\in\NN^*,\ -\frac{1}{\epsilon^2}\Big(\frac{|V_n|}{n}+\frac{1}{2n}\Big)\leq nb_na_{k,n}
\leq \frac{k}{\epsilon(\epsilon+1)}\Big(\frac{|V_n|}{n}+\frac{1}{2n}\Big). $$
 Therefore,
\begin{alignat*}{2}
\TV(\CPois(\lambda_1,p_1),\CPois(\lambda_2,p_2))\leq &
1-\exp\Big(-\frac{t}{\epsilon^2}\big(\frac{|V_n|}{n}+\frac{1}{2n}\big)\Big)+
\frac{t}{2\epsilon^2}\Big(\frac{|V_n|}{n}+\frac{1}{2n}\Big)\\
\leq & \frac{3t}{2\epsilon^2}\Big(\frac{|V_n|}{n}+\frac{1}{2n}\Big).
\end{alignat*}
\end{itemize}
\end{proof}
In summary, it follows from Lemma \ref{lemrecoupe} and Proposition \ref{TVlengthneig} the following result
for the number of neighbours of a vertex:
\begin{prop}
\label{convvoisins}
 Let $x\in \KN$ be a vertex and let $V_n\subset \KN\setminus\{x\}$ be a subset of vertices. 
  The total variation distance between the distribution of 
 $|\mathcal{N}_{nt,x}(\KN\setminus V_n)|$ and the 
 $\CPois\big(\frac{t}{\epsilon(\epsilon+1)},\Geo\big)$ distribution 
 is smaller than 
 $$\frac{3t}{2\epsilon^2}\Big(\frac{|V_n|}{n}+\frac{1}{2n}\Big)
 +\frac{t(\epsilon+1)}{n\epsilon^3}+\frac{t^2}{n\epsilon^4}.$$
\end{prop}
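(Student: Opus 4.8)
The plan is to assemble Proposition~\ref{convvoisins} from the two ingredients already established, treating the difference between $|\mathcal{N}_{nt,x}(\KN\setminus V_n)|$ and $\sum_{\ell\in\DL_{nt,x}(\KN\setminus V_n)}(|\ell|-1)$ as an error controlled by the bad event of Lemma~\ref{lemrecoupe}. First I would observe that on the complement of the event $G_{n,t}$ restricted to loops inside $\KN\setminus V_n$ — i.e.\ when no loop of $\NL_{nt,x}(\KN\setminus V_n)$ crosses a vertex $y\neq x$ twice and no two such loops share a vertex $y\neq x$ — the neighbour count is \emph{exactly} $\sum_{\ell\in\DL_{nt,x}(\KN\setminus V_n)}(|\ell|-1)$, since in that case each of the $|\ell|-1$ non-$x$ vertices of each loop is a distinct neighbour. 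Hence the total variation distance between $\mathcal{L}(|\mathcal{N}_{nt,x}(\KN\setminus V_n)|)$ and $\mathcal{L}\big(\sum_{\ell}(|\ell|-1)\big)$ is at most $\Pd(G_{n,t})$, which by Lemma~\ref{lemrecoupe} (applied with $t$ replaced by $nt$, and noting that removing the vertices of $V_n$ only decreases the probability) is bounded by $\frac{nt}{n^2\epsilon^3}(\epsilon+1)+\frac{(nt)^2}{n^3\epsilon^4}=\frac{t(\epsilon+1)}{n\epsilon^3}+\frac{t^2}{n\epsilon^4}$.

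Next I would invoke Proposition~\ref{TVlengthneig}(i)--(ii): the law of $\sum_{\ell\in\DL_{nt,x}(\KN\setminus V_n)}(|\ell|-1)$ is $\CPois(ntb_n,\nu_n)$, and this is within $\frac{3t}{2\epsilon^2}\big(\frac{|V_n|}{n}+\frac{1}{2n}\big)$ in total variation of $\CPois\big(\frac{t}{\epsilon(\epsilon+1)},\Geo\big)$. Finally I would combine the two estimates by the triangle inequality for $\TV$:
\[
\TV\!\left(\mathcal{L}(|\mathcal{N}_{nt,x}(\KN\setminus V_n)|),\ \CPois\big(\tfrac{t}{\epsilon(\epsilon+1)},\Geo\big)\right)
\le \Pd(G_{n,t}) + \frac{3t}{2\epsilon^2}\Big(\frac{|V_n|}{n}+\frac{1}{2n}\Big),
\]
which gives exactly the claimed bound $\frac{3t}{2\epsilon^2}\big(\frac{|V_n|}{n}+\frac{1}{2n}\big)+\frac{t(\epsilon+1)}{n\epsilon^3}+\frac{t^2}{n\epsilon^4}$.

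The one step that needs genuine care — the main obstacle — is the coupling claim that on the good event the neighbour count equals $\sum_\ell(|\ell|-1)$, and more precisely that $\TV$ between the two laws is bounded by $\Pd(\text{bad event})$ rather than something larger. The clean way to see this is that the two random variables are built on the same Poisson loop configuration and agree pointwise off $G_{n,t}$ (intersected with the sub-$\sigma$-algebra generated by loops avoiding $V_n$), so for any set $B$, $|\Pd(X\in B)-\Pd(Y\in B)|\le \Pd(X\ne Y)\le \Pd(G_{n,t})$; one just has to make sure the event being used is precisely the one whose probability Lemma~\ref{lemrecoupe} bounds, and that passing to the subgraph on $\KN\setminus V_n$ and rescaling time $t\mapsto nt$ are handled consistently. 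Everything else is a direct concatenation of results already proved in this section, so no further calculation is required.
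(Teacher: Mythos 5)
Your proposal is correct and follows exactly the route the paper itself indicates (the paper introduces Proposition~\ref{convvoisins} with ``In summary, it follows from Lemma~\ref{lemrecoupe} and Proposition~\ref{TVlengthneig}'' and leaves the assembly to the reader): identify $|\mathcal{N}_{nt,x}(\KN\setminus V_n)|$ with $\sum_{\ell}(|\ell|-1)$ off the bad event, bound that discrepancy in total variation by $\Pd(G_{n,nt})$ via Lemma~\ref{lemrecoupe} evaluated at time $nt$ (and the restriction to loops avoiding $V_n$ can only shrink the bad event), then add the $\CPois$-to-$\CPois$ estimate from Proposition~\ref{TVlengthneig}(ii) by the triangle inequality. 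No gap.
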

\subsection{Comparison between a component size and the associated Galton-Watson
process}
The aim of this section is to prove  that  small component sizes  at time $nt$ are well approximated 
by $\bar{T}_{nt}^{(n)}$ which has the same distribution as the total population size of a 
Galton-Watson process with offspring distribution $\CPois(nt\beta_{n,\epsilon},\nu_{n,\epsilon})$ and 
a single ancestor (first step of the proof of Theorem \ref{thmsub}):
\begin{prop}\label{propcomp1}
Let $x$ be a vertex. There exist two polynomial functions $B_1$ and $B_2$  such
that for every $t>0,\epsilon>0$  and $k,n\in\NN^*$, 
 $$|\Pd(|C^{(n)}_{nt}(x)|\leq k)-\Pd(\bar{T}^{(n)}_{nt}\leq k)|\leq 
B_1(t,\frac{1}{\epsilon})\frac{k^2}{n}+B_2(t,\frac{1}{\epsilon})\frac{k^4}{n^2}
.$$
\end{prop}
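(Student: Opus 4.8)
The plan is to compare the component exploration procedure with its Galton--Watson coupling step by step, controlling the discrepancy by the probability of "bad" events that make $\xi^{(n)}_{nt,k}$ differ from $\bar\zeta^{(n)}_{nt,k}$. Recall from Section \ref{sect:exploration} that $\xi^{(n)}_{nt,k} \le \zeta^{(n,1)}_{nt,k} \le \bar\zeta^{(n)}_{nt,k}$, so one always has $|C^{(n)}_{nt}(x)| \le \bar T^{(n)}_{nt}$; hence $\Pd(\bar T^{(n)}_{nt}\le k) \le \Pd(|C^{(n)}_{nt}(x)|\le k)$, and only a one-sided bound is needed. First I would fix the event $D_k$ that the coupling "breaks" during the first $k$ exploration steps, meaning that for some $j\le k$ either $\xi^{(n)}_{nt,j} \ne \zeta^{(n,1)}_{nt,j}$ (a loop through $x_j$ revisits a vertex or two such loops meet away from $x_j$, inside $\KN\setminus H_{j-1}$), or $\zeta^{(n,2)}_{nt,j} \ne 0$ (a loop through $x_j$ touches the already-explored set $H_{j-1}$). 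On the complement of $D_k$, the exploration and the Galton--Watson walk agree for the first $k$ steps, so $\{|C^{(n)}_{nt}(x)|\le k\}$ and $\{\bar T^{(n)}_{nt}\le k\}$ coincide; therefore $|\Pd(|C^{(n)}_{nt}(x)|\le k)-\Pd(\bar T^{(n)}_{nt}\le k)| \le \Pd(D_k)$.

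The next step is to bound $\Pd(D_k)$. The first type of failure at step $j$ is exactly the event $G_{n,nt}$ of Lemma \ref{lemrecoupe} restricted to loops through $x_j$ avoiding $H_{j-1}$, so its probability is at most $\frac{nt(\epsilon+1)}{n^2\epsilon^3}+\frac{(nt)^2}{n^3\epsilon^4} = O(t/(n\epsilon^3)+t^2/(n\epsilon^4))$ per step. The second type — a loop through $x_j$ hitting $H_{j-1}$, which has at most $j-1\le k$ vertices — has $\mu$-measure of order $\frac{|H_{j-1}|}{n^2\epsilon^2}$ by a computation like the one for $\mu(\ell:\,N_x(\ell)\ge1)$ in Lemma \ref{lemrecoupe}; after multiplying by the time $nt$ and using $1-e^{-u}\le u$, this contributes $O\big(\frac{tk}{n\epsilon^2}\big)$ per step. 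Summing over the $k$ steps (and noting the exploration never runs past step $k$ on the relevant event) gives $\Pd(D_k) \le B_1(t,\tfrac1\epsilon)\frac{k^2}{n} + B_2(t,\tfrac1\epsilon)\frac{k^4}{n^2}$, where the $k^4/n^2$ term absorbs the second-order pieces of Lemma \ref{lemrecoupe} together with products of two bad events; polynomial dependence on $t$ and $1/\epsilon$ is automatic from the bounds above.

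The main obstacle is the measurability bookkeeping: conditioning on $\mathcal F_{j-1}$, the active vertex $x_j$ and the explored set $H_{j-1}$ are determined, but the loops through $x_j$ that are used to define $\xi^{(n)}_{nt,j}$ must be shown to be \emph{fresh} — independent of everything revealed so far — so that the per-step estimates of Lemma \ref{lemrecoupe} apply conditionally. This is exactly the point handled by the construction of the independent copies $\bar\zeta^{(n,2)}_{nt,k}$ in Section \ref{sect:exploration}: disjoint loop families in $\DL^{(n)}_{nt}$ are independent, and the set of loops inspected at step $j$ is disjoint from $\DL^{(n)}_{nt}(H_{j-1})\cup\bigcup_{i<j}\DL^{(n)}_{nt,x_i,H_{i-1}}$. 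I would make this precise by writing $\Pd(D_k) \le \sum_{j=1}^{k}\Ed\big[\un_{\{j\le \bar T^{(n)}_{nt}\}}\,\Pd(\text{failure at step }j\mid \mathcal F_{j-1})\big]$ and bounding the conditional probability uniformly in the value of $H_{j-1}$ by the estimates above with $|V_n|=|H_{j-1}|\le k$. Everything else is the routine polynomial arithmetic of collecting the $O(k^2/n)$ and $O(k^4/n^2)$ contributions.
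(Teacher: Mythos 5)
Your overall strategy matches the paper's, but there is a genuine gap in the identification of the failure events. You assert that the first type of failure, $\{\xi^{(n)}_{nt,j}\ne\zeta^{(n,1)}_{nt,j}\}$, ``is exactly the event $G_{n,nt}$ of Lemma \ref{lemrecoupe} restricted to loops through $x_j$ avoiding $H_{j-1}$.'' This is false. The event $G_{n,t,j}$ only covers loops through $x_j$ that self-intersect or intersect another loop \emph{through $x_j$}; it does not cover the case where a loop $\ell\in\NL_{nt,x_j}(\KN\setminus H_{j-1})$ passes through a vertex $y\in A_{j-1}$ that was made active by a loop through an \emph{earlier} explored vertex $x_i$. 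In that case $y$ is counted in $\zeta^{(n,1)}_{nt,j}$ but not in $\xi^{(n)}_{nt,j}$ (it is already active), so $\xi^{(n)}_{nt,j}<\zeta^{(n,1)}_{nt,j}$ without $G_{n,nt,j}$ holding. This is exactly the event the paper calls $F_{n,t,j}$ and bounds via Lemma \ref{lemintersect} by $t|A_{j-1}|/(n\epsilon^2)$. It is in fact the most delicate of the three contributions because $|A_{j-1}|$ is random and grows with $j$: the paper handles it with the martingale-type estimate $\Ed(\un_{\{|C^{(n)}_{nt}(x)|\geq j\}}(|A_{j-1}|-1))\leq (j-1)\Ed(\zeta^{(n)}_{nt,1})$. (You could alternatively note that on $\{|C^{(n)}_{nt}(x)|\le k\}$ one has $|A_{j-1}|\le k$ deterministically, which also yields the $O(k^2/n)$ rate, but either way the term must be accounted for.)

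A secondary, more cosmetic issue: your $D_k$ uses the event $\{\zeta^{(n,2)}_{nt,j}\ne 0\}$, whereas the coupling actually breaks when the \emph{independent copy} $\bar\zeta^{(n,2)}_{nt,j}$ is positive, since $\bar\zeta^{(n)}_{nt,j}=\zeta^{(n,1)}_{nt,j}+\bar\zeta^{(n,2)}_{nt,j}$. Having $\zeta^{(n,2)}_{nt,j}=0$ does not force $\bar\zeta^{(n,2)}_{nt,j}=0$. The two random variables share the same conditional law given $\mathcal F_{j-1}$, so your bound is numerically unaffected, but the event you condition on should be stated with $\bar\zeta^{(n,2)}_{nt,j}$ for the logic ``on $D_k^c$ the two walks agree up to step $k$'' to actually hold.
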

Let us recall that the number of new vertices added in the $j$-th step of the exploration procedure is 
$\xi^{(n)}_{t,j}=|\Vois_{t,x_j}(\KN\setminus H_{j-1})\setminus A_{j-1}|$ where $A_{j-1}$ and 
$H_{j}=\{x_1,\ldots,x_{j-1}\}$ are respectively  the set of active 
vertices and  explored  vertices in step $j-1$. We have already seen one source of difference  
between  $\xi^{(n)}_{t,j}$ and $\zeta^{(n)}_{t,j}=\sum_{\ell\in\DL_{t,x_j}(\KN)}(|\ell|-1)$.
It is described by the  event 
\begin{quote}
$G_{n,t,j}$:  `there exists a loop in $\NL_{t,x_j}(\KN\setminus
H_{j-1})$ that crosses a same vertex several times or that crosses another loop 
in $\NL_{t,x_j}(\KN\setminus H_{j-1})$ at a vertex $y\neq x_j$'.
\end{quote}
By Lemma \ref{lemrecoupe}, the probability of this event is bounded by: 
$\frac{t}{n^2\epsilon^3}(\epsilon+1)+\frac{t^2}{n^3\epsilon^4}$. \\
There are two other sources of difference described by the following events:
\begin{itemize}
\item $\{\bar{\zeta}^{(n,2)}_{t,j}>0$\}:  `there exists a loop passing through $x_j$ and through already explored vertices $H_{j-1}$',
\item\label{eventFntj} $F_{n,t,j}$:  `there exists a loop  in $\NL_{t,x_j}(\KN\setminus
H_{j-1})$ (i.e.  passing through $x_j$ but not through $H_{j-1}$) which intersects active vertices $A_{j-1}$',
\end{itemize}
The probability of these two events can be bounded by using the following lemma:
\begin{lem}
\label{lemintersect}
 Let $A$ be a subset of vertices and let $x$ be another
vertex. For every $t>0$, 
\[\Pd(\exists \ell \in\DL_{t,x}(\KN),\ \ell \;\text{intersects}\;
A)=1-\Big(1+\frac{|A|}{n\epsilon(n\epsilon +|A|+1)}\Big)^{-t}.
\]
\end{lem}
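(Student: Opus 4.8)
The plan is to compute the probability directly from the Poisson point process structure. Since $\DL_{t,x}(\KN)$ is the projection of $\mathcal{DP}\cap([0,t]\times\{\ell:x\in\ell\})$, the number of loops passing through $x$ that intersect $A$ is Poisson-distributed with parameter $t\,\mu(\{\ell\in\DL(\KN):x\in\ell\text{ and }\ell\text{ intersects }A\})$, so the probability that no such loop appears before time $t$ is $\exp(-t\,\mu(\cdots))$, and the answer is $1$ minus that quantity. Hence the task reduces to computing the $\mu$-measure $m:=\mu(\{\ell:x\in\ell,\ \ell\cap A\neq\emptyset\})$ and checking that $e^{-m}=\bigl(1+\tfrac{|A|}{n\epsilon(n\epsilon+|A|+1)}\bigr)^{-1}$, i.e.\ that $m=\log\bigl(1+\tfrac{|A|}{n\epsilon(n\epsilon+|A|+1)}\bigr)$.

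First I would write $m$ by inclusion–exclusion as $m=\mu(\{\ell:x\in\ell\})-\mu(\{\ell:x\in\ell,\ \ell\cap A=\emptyset\})$, where the second term is the measure of loops through $x$ that live entirely in $\KN\setminus A$. Both quantities are of the form $\mu(\{\ell\in\DL(V):x\in\ell\})$ for an appropriate vertex set $V$ (namely $V=\KN$ and $V=\KN\setminus A$), and this in turn equals $\mu(\DL(V))-\mu(\DL(V\setminus\{x\}))$. The quantity $\mu(\DL(V))$ for a set $V$ of size $m$ with the given transition weights is a standard loop-measure computation: summing $\frac{1}{k}$ over based loops of length $k$ in $V$ gives $\sum_{k\ge 2}\frac{1}{k}\cdot\frac{m^k}{(n(\epsilon+1))^k}=-\log\bigl(1-\tfrac{m}{n(\epsilon+1)}\bigr)-\tfrac{m}{n(\epsilon+1)}$ (this is exactly the type of series already used in the proof of Lemma \ref{lemrecoupe}, and the identity $\mu(\{\ell:x\in\ell\})=\log(1+\tfrac{1}{n\epsilon})-\tfrac{1}{n(\epsilon+1)}$ stated in the text is the special case $V=\KN$). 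Taking differences, the $-\tfrac{m}{n(\epsilon+1)}$ terms telescope away and one is left with a combination of four logarithms.

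The remaining step is purely algebraic: substituting $|V|=n$ and $|V|=n-|A|$ and simplifying, $m$ becomes $\log\bigl(\tfrac{n\epsilon+|A|}{n\epsilon+|A|-1}\cdot\tfrac{n\epsilon}{n\epsilon+1}\bigr)$ — wait, more carefully, collecting $\mu(\{\ell\in\DL(V):x\in\ell\})=\log\bigl(\tfrac{|V|}{|V|-1}\bigr)-\tfrac{1}{n(\epsilon+1)}$-type expressions with the conductance normalization $n(\epsilon+1)$ in the denominator, one gets
\[
m=\log\!\Big(1-\tfrac{n-|A|}{n(\epsilon+1)}\Big)-\log\!\Big(1-\tfrac{n-|A|-1}{n(\epsilon+1)}\Big)-\log\!\Big(1-\tfrac{n}{n(\epsilon+1)}\Big)+\log\!\Big(1-\tfrac{n-1}{n(\epsilon+1)}\Big),
\]
and exponentiating and clearing denominators (each factor $1-\tfrac{j}{n(\epsilon+1)}=\tfrac{n(\epsilon+1)-j}{n(\epsilon+1)}$, so the $n(\epsilon+1)$'s cancel) yields $e^{m}=\tfrac{(n\epsilon+|A|)(n\epsilon+1)}{(n\epsilon+|A|+1)(n\epsilon)}$; noting $(n\epsilon+|A|)(n\epsilon+1)=(n\epsilon+|A|+1)(n\epsilon)+|A|$ gives $e^{m}=1+\tfrac{|A|}{n\epsilon(n\epsilon+|A|+1)}$, which is the claimed expression. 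I do not anticipate a real obstacle here; the only thing to be careful about is bookkeeping of the four logarithmic terms and confirming the algebraic identity $(n\epsilon+|A|)(n\epsilon+1)-n\epsilon(n\epsilon+|A|+1)=|A|$, after which the result is immediate.
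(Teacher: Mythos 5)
Your proof is correct and follows essentially the same route as the paper: you reduce the probability to $1-e^{-t\mu(F_{A,x})}$ via the Poisson structure, write $\mu(F_{A,x})$ as the difference $\mu(\DL_x(\KN))-\mu(\DL_x(\KN\setminus A))$, express each term as a difference of total loop measures $\mu(\DL(V))-\mu(\DL(V\setminus\{x\}))$ using $\mu(\DL(V))=-\log(1-\tfrac{|V|}{n(\epsilon+1)})-\tfrac{|V|}{n(\epsilon+1)}$, and simplify the resulting four logarithms to the claimed form — exactly the paper's computation, with the same cancellation of the linear terms and the same final algebraic identity $(n\epsilon+|A|)(n\epsilon+1)-n\epsilon(n\epsilon+|A|+1)=|A|$.
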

\begin{proof}
Let $F_{A,x}$ be the subset of loops  $\ell$ which intersect  $A$ and pass through
$x$. 
$$\Pd(\exists \ell\in \DL_{t,x}(\KN), \ell \;\text{intersects}\;
A)=1-\exp(-t\mu(F_{A,x}))$$
and $$\mu(F_{A,x})=\mu(\DL_{x}(\KN))-\mu(\DL_{x}(\KN\setminus A)).$$ 
For a subset $V$ of $v$ vertices, set
$\beta_{n,\epsilon,v}=\mu(\DL_{x}(\KN\setminus V))$:
\begin{alignat*}{2}
\beta_{n,\epsilon,v}&=\mu(\DL(\KN\setminus V))-\mu(\DL(\KN\setminus
(V\cup\{x\})))\\
&=-\log(1-\frac{n-v}{n(\epsilon+1)})-\frac{n-v}{n(\epsilon+1)}+\log(1-\frac{
n-v-1}{n(\epsilon+1)})
+\frac{n-v-1}{n(\epsilon+1)}\\&=-\log(1+\frac{v}{n\epsilon})+\log(1+\frac{v+1}{
n\epsilon})-\frac{1}{n(\epsilon+1)}.
\end{alignat*}
Then,
$$
\mu(F_{A,x})=\log(1+\frac{|A|}{n\epsilon(n\epsilon+|A|+1)}).
$$
\end{proof}
With the help of these estimates, we prove Proposition \ref{propcomp1}. 
\begin{proof}[\textbf{Proof of Proposition \ref{propcomp1}}]
 As $|C^{(n)}_{nt}(x)|\leq \bar{T}^{(n)}_{nt}$, $$|\Pd(|C^{(n)}_{nt}(x)|\leq
k)-\Pd(\bar{T}^{(n)}_{nt}\leq k)|=\Pd(|C^{(n)}_{nt}(x)|\leq k\; \text{and}\;
\bar{T}^{(n)}_{nt}> k).$$ It is bounded above by $$\Pd(|C^{(n)}_{nt}(x)|\leq k\;
\text{and}\; \exists j\leq |C^{(n)}_{nt}(x)|,\
\xi^{(n)}_{nt,j}<\bar{\zeta}^{(n)}_{nt,j})\leq
\sum_{j=1}^{k}E(\un_{\{|C^{(n)}_{nt}(x)|\geq
j\}}\Pd(\xi^{(n)}_{nt,j}<\bar{\zeta}^{(n)}_{nt,j}|\mathcal{F}_{j-1})).$$ 
We have seen  that $$\Pd(\xi^{(n)}_{nt,j}<\bar{\zeta}^{(n)}_{nt,j}|\mathcal{F}_{j-1})\leq \Pd(\zeta^{(n,2)}_{nt,j}>0|\mathcal{F}_{j-1})+\Pd(F_{n,tn,j}|\mathcal{F}_{j-1})+\Pd(G_{n,tn,j}|\mathcal{F}_{j-1})$$ 
with the notations introduced page \pageref{eventFntj}. 
By Lemma  \ref{lemintersect}
\[\Pd(F_{n,tn,j}|\mathcal{F}_{j-1})\leq
\frac{t|A_{j-1}|}{n\epsilon^2}\;\text{and}\;\Pd(\zeta^{(n,2)}_{nt,j}>0|\mathcal{F}_{j-1})\leq
\frac{t(j-1)}{n\epsilon^2}
\]
and by Lemma \ref{lemrecoupe} 
\[
\Pd(G_{n,tn,j}|\mathcal{F}_{j-1})\leq
\frac{t}{n\epsilon^2}(1+\frac{1}{\epsilon}+\frac{t}{\epsilon^2}).  
\]
Therefore, 
$$\Pd(|C^{(n)}_{nt}(x)|\leq k\; \text{and}\;
\bar{T}^{(n)}_{nt}> k)\leq 
\frac{t}{n\epsilon^2}\sum_{j=1}^{k}\Big(E(|A_{j-1}|\un_{\{|C_{nt}^{(n)}(x)\geq j\}})+
(j+\frac{1}{\epsilon}+\frac{t}{\epsilon^2})\Pd(|C_{nt}^{(n)}(x)\geq j)\Big).$$
By construction $|A_{j-1}|-1=\sum_{i=1}^{j-1}(\xi^{(n)}_{nt,i}-1)$.  Let us
recall that  $\xi^{(n)}_{nt,i}$ has nonnegative integer values, it is bounded
above by $\bar{\zeta}^{(n)}_{nt,i}$ and the conditional law of
$\bar{\zeta}^{(n)}_{nt,i}$ given $\mathcal{F}_{i-1}$ is equal to the law of
$\zeta^{(n)}_{nt,1}$. Thus, 
\begin{equation}\label{m1Aj}\Ed(\un_{\{|C^{(n)}_{nt}(x)|\geq
j\}}(|A_{j-1}|-1))\leq \sum_{i=1}^{j-1}\Ed(\un_{\{|C^{(n)}_{nt}(x)|\geq
i\}}\Ed(\bar{\zeta}^{(n)}_{nt,i}|\mathcal{F}_{i-1}))\leq (j-1)E(\zeta^{(n)}_{nt,1}).
\end{equation}
To conclude we note that $(\Ed(\zeta^{(n)}_{nt,1}))_n$ converges to $\frac{t}{\epsilon^2}$, 
the expectation of 
$\CPois(\frac{t}{\epsilon(\epsilon+1)},\Geo)$ as $n$ tends to $+\infty$. Therefore, 
 there exist  positive reals $A$, $B$, $C$ such that for every $n\in\NN$, $t\geq0$ and $\epsilon>0$,  
$$|\Pd(|C^{(n)}_{nt}(x)|\leq k)-\Pd(\bar{T}^{(n)}_{nt}\leq k)|\leq
\frac{k^2t}{n\epsilon^2}(A+\frac{B}{\epsilon}+\frac{Ct}{\epsilon^2}).$$  
\end{proof}

\subsection{The total progeny of the Galton-Watson process associated to a component}
 Recall that the offspring distribution of the Galton-Watson process associated to a 
 component at time $nt$ is the compound
Poisson distribution $\CPois(tn\beta_{n,\epsilon},\nu_{n,\epsilon})$ with:
$$\beta_{n,\epsilon}=\log(1+\frac{1}{n\epsilon})-\frac{1}{n(\epsilon+1)}.$$
and 
 $$\nu_{\epsilon,
n}(j)=\frac{1-(1-\frac{1}{n})^{j+1}}{\beta_{n,
\epsilon}(j+1)(\epsilon+1)^{j+1}}\quad \forall j\in\NN^*.$$
We have shown (Proposition \ref{TVlengthneig}) that the compound Poisson
distribution $\CPois(tn\beta_{n,\epsilon},\nu_{n,\epsilon})$ is close to 
the $\CPois(\frac{t}{\epsilon(\epsilon+1)},\Geo)$-distribution for large $n$.
We now consider the distribution of  the total number of individuals in a
Galton-Watson process with one ancestor and  offspring distribution
$\CPois(tn\beta_{n,\epsilon}, \nu_{n,\epsilon})$. 
Let us state a general result for the comparison of the total number of individuals in two
Galton-Watson process: 
\begin{lem}
\label{lem_couplingT}
 Let $\nu_1$ and $\nu_2$ be two probability distributions on $\NN$. 
 Let $\TV$ denote the total variation distance between probability measures. Let $T_1$
and $T_2$ be the total population sizes of the Galton-Watson processes with one ancestor and
offspring distribution $\nu_1$ and $\nu_2$ respectively. \\
For every $k\in \NN^*$, $|\Pd(T_1\geq k)-\Pd(T_2\geq k)|\leq
2\TV(\nu_1,\nu_2)\sum_{i=1}^{k-1}\Pd(T_2\geq i)$. 
\end{lem}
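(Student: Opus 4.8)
The plan is to couple the two Galton–Watson processes by coupling their offspring variables. By Strassen's theorem there are sequences of i.i.d.\ random variables $(X_i^{(v)})_{i,v}$ with law $\nu_1$ and $(Y_i^{(v)})_{i,v}$ with law $\nu_2$, one pair per node $v$ of an infinite labelled tree, such that $\Pd(X_i^{(v)}\neq Y_i^{(v)})=\TV(\nu_1,\nu_2)$ for each pair, the pairs being independent across $(i,v)$. Running the breadth‑first exploration of the family tree of the single ancestor using the $X$'s gives a process whose total progeny has the law of $T_1$; using the $Y$'s gives $T_2$. On the event that the first $k-1$ explored nodes all satisfy $X^{(v)}=Y^{(v)}$, the two explorations agree up to having discovered at least $k$ individuals, so the events $\{T_1\geq k\}$ and $\{T_2\geq k\}$ coincide. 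Hence $|\Pd(T_1\geq k)-\Pd(T_2\geq k)|$ is bounded by the probability that some node among the first $\min(k-1,T_2)$ explored has $X^{(v)}\neq Y^{(v)}$.

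The key step is then a union bound over explored nodes. Let me denote by $v_1,v_2,\dots$ the nodes explored in the breadth‑first order of the $T_2$‑process (so $v_m$ is well defined whenever $T_2\geq m$), and let $D_m$ be the event $\{X^{(v_m)}\neq Y^{(v_m)}\}$. The decision of which node is $v_m$ depends only on the $Y$‑offspring counts of $v_1,\dots,v_{m-1}$, hence is independent of the pair $(X^{(v_m)},Y^{(v_m)})$; therefore $\Pd(D_m\mid T_2\geq m)=\TV(\nu_1,\nu_2)$, i.e.\ $\Pd(D_m\cap\{T_2\geq m\})=\TV(\nu_1,\nu_2)\,\Pd(T_2\geq m)$. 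Intersecting the symmetric‑difference event with the first discrepancy, we get
\[
|\Pd(T_1\geq k)-\Pd(T_2\geq k)|\leq \Pd\Big(\bigcup_{m=1}^{k-1}\big(D_m\cap\{T_2\geq m\}\big)\Big)\leq \sum_{m=1}^{k-1}\TV(\nu_1,\nu_2)\,\Pd(T_2\geq m),
\]
which after absorbing the constant is the claimed bound; the factor $2$ comes from bounding the symmetric difference by twice the above (or, more simply, the same argument with roles of $X$ and $Y$ exchanged contributes symmetrically, but either way the stated inequality holds).

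The main obstacle is making rigorous the claim that on $\{X^{(v_m)}=Y^{(v_m)}\ \forall m\le k-1\}$ the two trees agree \emph{at least until $k$ individuals are found}, and in particular that $\{T_1\ge k\}=\{T_2\ge k\}$ there; one must check carefully that the breadth‑first exploration order of the first $\min(k-1, T_2)$ nodes is identical for the two processes on that event, which is a short induction on $m$ using that the $m$‑th node and its offspring count are determined by the offspring counts of $v_1,\dots,v_{m-1}$. The remaining point — that the indicator of $\{T_2\ge m\}$ together with the identity of $v_m$ is $\sigma(Y^{(v_1)},\dots,Y^{(v_{m-1})})$‑measurable and hence independent of $(X^{(v_m)},Y^{(v_m)})$ — is then immediate from the tree construction, and the union bound is routine.
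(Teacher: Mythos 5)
Your proof is correct and follows essentially the same strategy as the paper's: a Strassen coupling of the offspring distributions, identification of the first discrepancy along the exploration, and a union bound using the fact that the $m$-th explored node and whether exploration has lasted $m$ steps depend only on earlier offspring counts and hence are independent of the $m$-th coupled pair. The paper implements this via the random-walk (hitting-time) representation $\tau=\min\{m:\sum_{i\le m}\xi_i=m-1\}$ with a single coupled sequence $(X_i,Y_i)$ indexed by exploration step, then bounds $\Pd(\tau_1\ge k,\tau_2<k)$ and $\Pd(\tau_1<k,\tau_2\ge k)$ separately and adds them, which is how the factor $2$ appears. You instead attach one coupled pair to each node of an abstract tree and bound the whole symmetric difference $\{T_1\ge k\}\triangle\{T_2\ge k\}$ at once by $\Pd\bigl(\bigcup_{m=1}^{k-1}(D_m\cap\{T_2\ge m\})\bigr)$; carried out carefully this gives the bound without the factor $2$, which is slightly sharper than the statement. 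Your closing remarks about where the $2$ ``comes from'' are therefore unnecessary and a little muddled — in your version no factor $2$ arises, and the stated inequality simply follows a fortiori; in the paper's version the $2$ comes from using $|a-b|\le a+b$ rather than $|a-b|\le\max(a,b)$. One notational slip worth fixing: you introduce random variables $(X_i^{(v)})_{i,v}$ with a redundant index $i$ and then drop it; you want exactly one coupled pair $(X^{(v)},Y^{(v)})$ per node $v$. Also, the independence you need should be stated as $\{v_m=u\}\in\sigma\bigl((Y^{(w)})_{w\ne u}\bigr)$ for each fixed $u$, from which $\Pd(D_m,v_m=u)=\TV(\nu_1,\nu_2)\Pd(v_m=u)$ follows; your phrasing in terms of the random nodes $v_1,\dots,v_{m-1}$ is informal but points to the right argument.
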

\begin{proof}
We follow the proof of Theorem 3.20  in \cite{bookvanderHofstad} which  states
an analogous result between binomial and Poisson branching processes.
The proof is based on the description of the total population size by means of 
the hitting time of a random walk  and  coupling arguments. 
 By Strassen's theorem, there exist two independent sequences $(X_i)_{i\in
\NN^*}$ and $(Y_i)_{i\in \NN^*}$  of \iid random variables with distributions $\nu_1$
and $\nu_2$ respectively such that ${\TV(\nu_1,\nu_2)=P(X_i\neq Y_i)}$ for every
$i\in\NN\*$. Let $\tau_1=\min(n,\ X_1+ \ldots +X_n=n-1)$ and $\tau_2=\min(n,\
Y_1+ \ldots +Y_n=n-1)$.  $\tau_1$ and $\tau_2$ have the same law as $T_1$ and
$T_2$ respectively. Let $k\in\NN^*$. 
\begin{alignat*}{2}\Pd(\tau_1\geq k\; \text{and}\; \tau_2<k)&=\Pd(\exists i\leq
k-1,\ Y_i\neq X_i\; \text{and}\; \tau_1\geq k)\\ &\leq
\sum_{i=1}^{k-1}\Pd(X_j=Y_j\;\forall j\leq i-1, \; X_i\neq Y_i \; \text{and}\;
\tau_1\geq k).
\end{alignat*}
 Since $\{X_j=Y_j\;\forall j\leq i-1\; \text{and}\; \tau_1\geq k\}\subset
\{\tau_2\geq i\}$ and since $\{\tau_2\geq i\}$ depends only on $Y_1,\ldots,Y_{i-1}$,
we obtain:
$$\Pd(\tau_1\geq k\; \text{and}\; \tau_2<k)\leq \sum_{i=1}^{k-1}\Pd(\tau_2\geq
i)\Pd(X_i\neq Y_i)=\TV(\nu_1,\nu_2)\sum_{i=1}^{k-1}\Pd(\tau_2\geq i). $$
Similarly, 
$$\Pd(\tau_1<k\;\text{and}\;\tau_2\geq k)\leq \sum_{i=1}^{k-1}\Pd(X_i\neq Y_i\;\text{and}\; \tau_2\geq i)
\leq \TV(\nu_1,\nu_2)\sum_{i=1}^{k-1}\Pd(\tau_2\geq i).$$
\end{proof}
From Lemma \ref{lem_couplingT} and Proposition \ref{TVlengthneig}, we obtain: 
\begin{prop}
\label{propcouplingtotalpopul}
Let $t>0$ and $n\in \NN^*$. Let $T^{(n)}_{t}$ and $T_{t}$ denote  the total number of individuals in a
Galton-Watson process with one ancestor and  offspring distribution
$\CPois(tn\beta_{n,\epsilon}, \nu_{n,\epsilon})$  and 
$\CPois(\frac{t}{\epsilon(\epsilon+1)},\mathcal{G}_{\NN^*}(\frac{\epsilon}{
\epsilon+1}))$ respectively. 
$$|\Pd(T^{(n)}_{nt}\geq k)-\Pd(T_{t}\geq
k)|\leq \frac{3t(k-1)}{2n\epsilon^2}\quad\text{for every}\; k\in\NN^*$$ 
\end{prop}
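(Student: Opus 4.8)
The plan is to combine Lemma~\ref{lem_couplingT} with the total variation estimate from Proposition~\ref{TVlengthneig}(ii), specialized to the case $V_n=\emptyset$. Indeed, the offspring distribution $\CPois(tn\beta_{n,\epsilon},\nu_{n,\epsilon})$ of the Galton-Watson process defining $T^{(n)}_{nt}$ is exactly the distribution of $\sum_{\ell\in\DL_{x,nt}(\KN\setminus V_n)}(|\ell|-1)$ with $V_n=\emptyset$, so part~(i) of Proposition~\ref{TVlengthneig} identifies $b_n=\beta_{n,\epsilon}$ and $\nu_n=\nu_{n,\epsilon}$, and part~(ii) with $|V_n|=0$ gives
\[
\TV\big(\CPois(tn\beta_{n,\epsilon},\nu_{n,\epsilon}),\CPois(\tfrac{t}{\epsilon(\epsilon+1)},\Geo)\big)\leq \frac{3t}{4n\epsilon^2}.
\]

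Next I would apply Lemma~\ref{lem_couplingT} with $\nu_1=\CPois(tn\beta_{n,\epsilon},\nu_{n,\epsilon})$, $\nu_2=\CPois(\tfrac{t}{\epsilon(\epsilon+1)},\Geo)$, $T_1=T^{(n)}_{nt}$ and $T_2=T_t$. This yields, for every $k\in\NN^*$,
\[
|\Pd(T^{(n)}_{nt}\geq k)-\Pd(T_t\geq k)|\leq 2\,\TV(\nu_1,\nu_2)\sum_{i=1}^{k-1}\Pd(T_t\geq i)\leq \frac{3t}{2n\epsilon^2}\sum_{i=1}^{k-1}\Pd(T_t\geq i).
\]
The final step is simply to bound each probability $\Pd(T_t\geq i)$ by $1$, so that $\sum_{i=1}^{k-1}\Pd(T_t\geq i)\leq k-1$, giving the claimed bound $\frac{3t(k-1)}{2n\epsilon^2}$.

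There is no real obstacle here: the statement is a straightforward corollary assembled from two already-proved lemmas, and the only point requiring a moment of care is the bookkeeping that $\nu_{n,\epsilon}$ in the statement coincides with the $\nu_n$ of Proposition~\ref{TVlengthneig} when $V_n$ is empty (so that $\nu_n(j)=\frac{1-(1-1/n)^{j+1}}{\beta_{n,\epsilon}(j+1)(\epsilon+1)^{j+1}}$), and likewise that $b_n=\beta_{n,\epsilon}=\log(1+\frac{1}{n\epsilon})-\frac{1}{n(\epsilon+1)}$. One could also observe that keeping the sharper factor $1-e^{-\lambda}$ in Lemma~\ref{lem_couplingT} is unnecessary since the crude $\sum_{i=1}^{k-1}\Pd(T_t\geq i)\leq k-1$ already suffices for the stated estimate; if a mass-type refinement were wanted one would instead use $\sum_{i\geq 1}\Pd(T_t\geq i)=\Ed(T_t)$, finite precisely when $t\leq\epsilon^2$, but that is not needed for this proposition.
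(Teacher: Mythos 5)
Your proof is correct and is exactly the argument the paper intends: the proposition is presented there as an immediate consequence of Lemma~\ref{lem_couplingT} combined with Proposition~\ref{TVlengthneig}(ii), and you have spelled out the required bookkeeping (that $V_n=\emptyset$ gives $b_n=\beta_{n,\epsilon}$, $\nu_n=\nu_{n,\epsilon}$, hence a total-variation bound of $\tfrac{3t}{4n\epsilon^2}$, which the factor $2$ and the crude bound $\sum_{i=1}^{k-1}\Pd(T_t\geq i)\leq k-1$ turn into $\tfrac{3t(k-1)}{2n\epsilon^2}$).
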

Theorem \ref{thmsub} follows from Propositions \ref{propcomp1} and \ref{propcouplingtotalpopul}. 
\subsection{Asymptotic distribution of two component sizes}
Theorem \ref{thmsub} can be extended to a joint limit theorem for the sizes of  two components
following the proof used in \cite{Bertoinbookfrag} for  the Erd\"os-R\'enyi
random graph process:
\begin{corol}
\label{coroltwocomponents}
Let $x$ and $y$ be two distinct vertices of $\bar{K}_n$. 
For every $t>0$, $j,k\in\NN^*$, $\Pd(|C^{(n)}_{nt}(x)|=j \et |C^{(n)}_{nt}(y)|=k)$ converges to $\Pd(T^{(1)}_{\epsilon,t}=i)\Pd( T^{(1)}_{\epsilon,t}=j)$ as $n$ tends to $+\infty$. 
\end{corol}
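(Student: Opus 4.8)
The plan is to follow the scheme used in \cite{Bertoinbookfrag} for the Erd\"os--R\'enyi graph: explore the component of $x$ first, then explore the component of $y$ inside the vertices left over, and check that the second exploration is asymptotically the same as an exploration run from scratch. Fix $j,k\in\NN^*$. The first ingredient is that the loop model on $\bar K_n$ is invariant under permutations of the vertices, so conditionally on $\{|C^{(n)}_{nt}(x)|=j\}$ the set $C^{(n)}_{nt}(x)$ is uniform among the size-$j$ subsets of $\KN$ containing $x$; in particular $\Pd(y\in C^{(n)}_{nt}(x)\mid |C^{(n)}_{nt}(x)|=j)=\frac{j-1}{n-1}\to 0$. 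Hence $\Pd(|C^{(n)}_{nt}(x)|=j,\ y\notin C^{(n)}_{nt}(x))$ has the same limit as $\Pd(|C^{(n)}_{nt}(x)|=j)$, which by Theorem \ref{thmsub} equals $\Pd(T^{(1)}_{\epsilon,t}=j)$. It therefore suffices to prove that $\Pd(|C^{(n)}_{nt}(y)|=k\mid C^{(n)}_{nt}(x)=S)\to\Pd(T^{(1)}_{\epsilon,t}=k)$ uniformly over subsets $S$ with $x\in S$, $y\notin S$ and $|S|=j$, and then to write $\Pd(|C^{(n)}_{nt}(x)|=j,\ |C^{(n)}_{nt}(y)|=k)$ as $\sum_{S}\Pd(C^{(n)}_{nt}(x)=S)\,\Pd(|C^{(n)}_{nt}(y)|=k\mid C^{(n)}_{nt}(x)=S)$, the sum running over such $S$, plus the negligible contribution of $\{y\in C^{(n)}_{nt}(x)\}$.

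For the conditional statement I would use the following structural fact. The Poisson loop set decomposes into three independent families --- loops included in $S$, loops included in $\KN\setminus S$, and loops that meet both $S$ and $\KN\setminus S$ --- and the event $\{C^{(n)}_{nt}(x)=S\}$ depends only on the first two: the loops inside $S$ before time $nt$ must connect all of $S$, and no loop before time $nt$ may meet both $S$ and its complement. Consequently, conditionally on $\{C^{(n)}_{nt}(x)=S\}$, the loops of $\DL^{(n)}_{nt}$ contained in $\KN\setminus S$ still form a Poisson loop set with its original intensity; moreover, on that event no loop of $\DL^{(n)}_{nt}$ joins $S$ to $\KN\setminus S$, so $C^{(n)}_{nt}(y)$ coincides with the connected component of $y$ in the subgraph generated by the loops contained in $\KN\setminus S$. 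This lets me run the component exploration procedure of Section \ref{sect:exploration} for $y$ inside $\KN\setminus S$: it is the very procedure already analysed, with the single difference that at each step one also discards the neighbours lying in the fixed extra set $S$.

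With that in place I would rerun the proof of Theorem \ref{thmsub} essentially verbatim. The only modification is that Proposition \ref{convvoisins} is now applied at step $i$ with excluded set $S\cup H_{i-1}$ instead of $H_{i-1}$, which adds a term of order $\frac{|S|}{n}=\frac{j}{n}$ to each total-variation bound; as $j$ is fixed this changes nothing in the limit. Hence Propositions \ref{propcomp1} and \ref{propcouplingtotalpopul}, read with $\KN$ replaced by $\KN\setminus S$ and with an extra error of order $\frac{j}{n}$, give $|\Pd(|C^{(n)}_{nt}(y)|=k\mid C^{(n)}_{nt}(x)=S)-\Pd(T^{(1)}_{\epsilon,t}=k)|=O(1/n)$ uniformly over $S$ with $|S|=j$ (the implied constant depending on $j$, $k$, $t$, $\epsilon$). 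Substituting this into the expansion of the joint probability and using the first paragraph yields convergence to $\Pd(T^{(1)}_{\epsilon,t}=j)\,\Pd(T^{(1)}_{\epsilon,t}=k)$.

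The step I expect to be the main obstacle is the conditional-independence claim of the second paragraph: one must check carefully that conditioning on the exact identity of $C^{(n)}_{nt}(x)$ does not bias the loops living in the complement $\KN\setminus S$, so that the component of $y$ really can be explored with a fresh loop process, and then that the estimates of Section \ref{sect:proofth} accommodate a fixed-size excluded set without affecting the limit. The rest is a rerun of computations already carried out.
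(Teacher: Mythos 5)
Your proof is correct and follows essentially the same route as the paper: exchangeability gives $\Pd\bigl(y\in C^{(n)}_{nt}(x)\mid |C^{(n)}_{nt}(x)|=j\bigr)=\tfrac{j-1}{n-1}\to 0$, and conditionally on $\{C^{(n)}_{nt}(x)=S,\ y\notin S\}$ the component of $y$ is explored inside a fresh Poisson loop set on $\KN\setminus S$, to which Theorem \ref{thmsub} applies with parameters converging to $(\epsilon,t)$ (the paper closes by reinterpreting the restricted loop set as the loop model on $\bar K_{n-j}$ with killing $\epsilon_{n,n-j}\to\epsilon$ and invoking the stability remark after Theorem \ref{thmsub}, whereas you reopen the Section \ref{sect:proofth} estimates with an extra fixed excluded set of size $j$ --- the same idea). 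One slip to fix in the write-up: you say $\{C^{(n)}_{nt}(x)=S\}$ ``depends only on the first two'' of your three loop families, but your own description (loops inside $S$ must connect $S$; no loop may meet both $S$ and its complement) shows it depends on the first and \emph{third} families, and what drives the argument is precisely the resulting independence from the \emph{second} family, the loops entirely contained in $\KN\setminus S$.
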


\begin{proof}[Proof of Corollary \ref{coroltwocomponents}]
 The proof is similar to the proof presented in \cite{Bertoinbookfrag} for the
Erd\"os-R\'enyi random graph. It is based on the following properties: 
\begin{itemize}
 \item[(i)] 
the vertices of $\bar{K}_n$ play the same role,
\item[(ii)]  for every subset $A$ of $\KN$, the loop set inside $A$ at time $t$,
$\DL^{(n)}_{t}(A)$ is  associated with the restriction  of $\mu$ to $A$ (denoted
$\mu^{(A)}$) and is independent of $\DL^{(n)}_{t}\setminus\DL_{t}^{(n)}(A)$. The measure
$\mu^{(A)}$ can be seen as the loop measure on $\bar{K}_n$  with vertex set
$A$, unit conductances and killing measure
$\kappa_{n,A}=|A|\epsilon_{n,|A|}=n-|A|+n\epsilon$.  Let us note that 
$\epsilon_{n,|A|}=\epsilon+\frac{n-|A|}{|A|}(\epsilon+1)$ converges to
$\epsilon$ if $|A|/n$ tends to 1.
\end{itemize}
Let $x$ and $y$ be two distinct vertices and let $j,k$ be two nonnegative
integers.  
 We have to study the convergence of $\Pd(|C^{(n)}_{\epsilon,nt}(x)|=j \et 
|C^{(n)}_{\epsilon,nt}(y)|=k)$. 
First, let us note that by (i), for every $n\geq j$, $\Pd(y\in
C^{(n)}_{\epsilon,nt}(x) \mid |C^{(n)}_{\epsilon,nt}(x)|=j)= \frac{j-1}{n-1}$.
Therefore, $\Pd(y\in C^{(n)}_{\epsilon,nt}(x) \et |C^{(n)}_{\epsilon,nt}(x)|=j)$
converges to $0$ as $n$ tends to $+\infty$. \\
By (ii),  $\Pd(|C^{(n)}_{\epsilon,nt}(y)|=k \mid y\not \in
C^{(n)}_{\epsilon,nt}(x) \et
|C^{(n)}_{\epsilon,nt}(x)|=j)=\Pd(|C^{(n-j)}_{\epsilon_{n,n-j},nt}(y)|=k ).$ 
The convergence result stated in Theorem \ref{thmsub} still holds if $t$ is replaced  by 
the sequence $(t_n)_n$  defined by ${t_n=(1-\frac{j}{n})t}$ $\forall n\in \NN^*$,  and 
if $\epsilon$ is replaced by the sequence $(\epsilon_{n,n-j})_n$ which converges to $\epsilon$.  
Therefore, $\Pd(y\not \in C^{(n)}_{\epsilon,nt}(x),\
|C^{(n)}_{\epsilon,nt}(x)|=j \et  |C^{(n)}_{\epsilon,nt}(y)|=k)$ which is equal
to 
\begin{multline*}\Pd(|C^{(n)}_{\epsilon,nt}(y)|=k \mid y\not \in
C^{(n)}_{\epsilon,nt}(x) \et |C^{(n)}_{\epsilon,nt}(x)|=j)\\ \times\big(1-\Pd(y\in
C^{(n)}_{\epsilon,nt}(x) \mid
|C^{(n)}_{\epsilon,nt}(x)|=j)\big)\Pd(|C^{(n)}_{\epsilon,nt}(x)|=j).
 \end{multline*}
converges to $\Pd(T^{(1)}_{\epsilon,t}=k)\Pd(T^{(1)}_{\epsilon,t}=j)$. 
\end{proof}
\section{Phase transition\label{sect:transition}}
This section is devoted to the proof of Theorem \ref{thmtransition}. 
The expectation of the compound Poisson distribution $\CPois(\frac{t}{\epsilon(\epsilon+1)},\Geo)$
is $\frac{t}{\epsilon^2}$. Thus the limiting Galton-Watson process associated to a component 
is subcritical or supercritical depending on whether $t$ is smaller or larger than $\epsilon^2$. 
\subsection{The subcritical regime}
An application of the component exploration procedure and a Chernov bound allow to prove 
that when $t< \epsilon^2$, the largest component size at time $nt$ is at most of order 
$\log(n)$ with probability that converges to $1$: 
\begin{bibli}[\ref{thmtransition}.(i)]
Let $0<t<\epsilon^2$. Set
$h(t)=\sup_{\theta\in]0,\log(\epsilon+1)[}(\theta-\log(L_{t,\epsilon}(\theta)))$
where $L_{t,\epsilon}$ is moment-generating function of the compound Poisson
distribution  $\CPois(\frac{t}{\epsilon(\epsilon+1)},\Geo)$.\footnote{$h(t)$ is
the value of the Cram\'er function at 1 of
$\CPois(\frac{t}{\epsilon(\epsilon+1)},\Geo)$. As the expectation of
$\CPois(\frac{t}{\epsilon(\epsilon+1)},\Geo)$ is $\frac{t}{\epsilon^2}$, $h(t)$ 
is positive for $t< \epsilon^2$ and vanishes at $t=\epsilon^2$.} \\
For every $a>(h(t))^{-1}$, 
$\Pd(\max_{x\in \KN}|C_{nt}^{(n)}(x)|> a\log(n))$ converges to $0$ as $n$ tends
to $+\infty$. 
\end{bibli}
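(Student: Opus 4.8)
The plan is to follow the classical branching-process / union-bound strategy for subcriticality: first show that a single component of fixed vertex $x$ is larger than $a\log(n)$ only with probability $o(1/n)$, then take a union bound over the $n$ vertices. By Theorem~\ref{thmsub} (or more precisely the coupling $|C^{(n)}_{nt}(x)|\le \bar{T}^{(n)}_{nt}$ established in Section~\ref{sect:exploration}), it suffices to bound $\Pd(\bar{T}^{(n)}_{nt}> a\log(n))$, where $\bar{T}^{(n)}_{nt}$ is the total progeny of a Galton--Watson process with offspring law $\CPois(nt\beta_{n,\epsilon},\nu_{n,\epsilon})$. Using the random-walk representation of the total progeny (the same hitting-time description exploited in Lemma~\ref{lem_couplingT}), $\{\bar{T}^{(n)}_{nt}\ge k\}$ is contained in the event that $\bar\zeta^{(n)}_{nt,1}+\cdots+\bar\zeta^{(n)}_{nt,k-1}\ge k-1$, i.e.\ a sum of $k-1$ i.i.d.\ copies of $\zeta^{(n)}_{nt,1}$ is at least $k-1$. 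Since the limiting offspring mean $t/\epsilon^2$ is $<1$, a Chernoff (exponential Markov) bound gives, for $\theta\in]0,\log(\epsilon+1)[$ (the right endpoint being forced by the radius of convergence of the moment generating function of the geometric-compound-Poisson law),
\[
\Pd\big(\bar{T}^{(n)}_{nt}\ge k\big)\le \big(e^{-\theta}L^{(n)}_{t,\epsilon}(\theta)\big)^{k-1},
\]
where $L^{(n)}_{t,\epsilon}$ is the moment generating function of $\CPois(nt\beta_{n,\epsilon},\nu_{n,\epsilon})$.

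The second step is to transfer this from the $n$-dependent law to the limiting law $\CPois(\frac{t}{\epsilon(\epsilon+1)},\Geo)$. Because $nt\beta_{n,\epsilon}\to \frac{t}{\epsilon(\epsilon+1)}$ and $\nu_{n,\epsilon}\to\Geo$ pointwise (with the geometric tail controlling uniform integrability of $e^{\theta\cdot}$ on any $[0,\theta_0]$ with $\theta_0<\log(\epsilon+1)$), one has $L^{(n)}_{t,\epsilon}(\theta)\to L_{t,\epsilon}(\theta)$ uniformly on compact subsets of $]0,\log(\epsilon+1)[$. Fix $a>1/h(t)$; by definition of $h(t)$ as a supremum, pick $\theta^\star\in]0,\log(\epsilon+1)[$ with $\theta^\star-\log L_{t,\epsilon}(\theta^\star)>1/a$, hence $a(\theta^\star-\log L_{t,\epsilon}(\theta^\star))>1$. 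For $n$ large, $\theta^\star-\log L^{(n)}_{t,\epsilon}(\theta^\star)\ge \gamma>1/a$ for some constant $\gamma$, so with $k=\lceil a\log n\rceil$,
\[
\Pd\big(|C^{(n)}_{nt}(x)|> a\log n\big)\le \Pd\big(\bar{T}^{(n)}_{nt}\ge \lceil a\log n\rceil\big)\le e^{-\gamma(\lceil a\log n\rceil-1)}=O\big(n^{-a\gamma}\big)=o(1/n).
\]
A union bound over the $n$ choices of $x$ then gives $\Pd(\max_{x\in\KN}|C^{(n)}_{nt}(x)|> a\log n)\le n\cdot o(1/n)=o(1)$, which is the claim.

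The main obstacle is the uniform control needed in the second step: making sure $L^{(n)}_{t,\epsilon}(\theta)$ stays finite and converges to $L_{t,\epsilon}(\theta)$ for $\theta$ close to, but below, $\log(\epsilon+1)$, since $L_{t,\epsilon}(\theta)\to+\infty$ as $\theta\uparrow\log(\epsilon+1)$ and we need $h(t)$ to be (nearly) attained. This requires checking that the compound-Poisson log-moment-generating function $\theta\mapsto nt\beta_{n,\epsilon}(\sum_{j\ge1}\nu_{n,\epsilon}(j)e^{\theta j}-1)$ is finite precisely when $e^\theta<\epsilon+1$ (because $\nu_{n,\epsilon}(j)\sim c(\epsilon+1)^{-(j+1)}$) and converges to $\frac{t}{\epsilon(\epsilon+1)}\big(\frac{\epsilon e^\theta}{\epsilon+1-e^\theta}\big)$ uniformly on $[0,\theta_0]$ for each $\theta_0<\log(\epsilon+1)$; the explicit formulas for $\beta_{n,\epsilon}$ and $\nu_{n,\epsilon}$ from Section~\ref{sect:exploration} make this a direct, if slightly delicate, estimate. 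Everything else (the random-walk hitting-time identity, the Chernoff bound, the union bound) is routine.
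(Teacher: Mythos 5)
Your proposal follows the same overall strategy as the paper: couple $|C^{(n)}_{nt}(x)|$ with the dominating Galton--Watson total progeny $\bar{T}^{(n)}_{nt}$ via the exploration procedure, apply an exponential Markov (Chernoff) bound to the random-walk representation of $\bar{T}^{(n)}_{nt}$ in terms of the i.i.d.\ increments $\bar\zeta^{(n)}_{nt,i}$, and finish with a union bound over the $n$ vertices.

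The one place you diverge, and which you yourself flag as ``the main obstacle,'' is the transfer from the $n$-dependent offspring m.g.f.\ $L^{(n)}_{t,\epsilon}$ to the limiting $L_{t,\epsilon}$. You propose to prove uniform-on-compacts convergence $L^{(n)}_{t,\epsilon}\to L_{t,\epsilon}$ and then argue for $n$ large; this would work but is more delicate than necessary. The paper instead computes $\Ed\bigl(e^{\theta\bar\zeta^{(n)}_{nt,x}}\bigr)=L_{t,\epsilon}(\theta)\exp\bigl(t\,g_n(\theta)\bigr)$ with
$g_n(\theta)=\sum_{j\ge 1}\frac{e^{\theta j}-1}{(j+1)(\epsilon+1)^{j+1}}\bigl(1-(1-\tfrac1n)^{j+1}-\tfrac{j+1}{n}\bigr)$, and observes that $g_n(\theta)\le 0$ for all $\theta\ge 0$ (each factor $1-(1-\tfrac1n)^{j+1}-\tfrac{j+1}{n}\le 0$ by Bernoulli's inequality). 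Thus $\Ed\bigl(e^{\theta\bar\zeta^{(n)}_{nt,x}}\bigr)\le L_{t,\epsilon}(\theta)$ for every $n$ and every $\theta\in[0,\log(\epsilon+1)[$, so the Chernoff bound $\Pd(|C^{(n)}_{nt}(x)|>k)\le e^{-kh(t)}$ holds \emph{uniformly in $n$}, and no limiting argument or convergence of m.g.f.'s is required. Your route is correct but takes a detour around an inequality that is actually already in your favor.
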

\begin{proof}
Let $k\in \NN^*$. 
By construction of the random variables $\xi^{(n)}_{t,j}$ and
$\bar{\zeta}^{(n)}_{t,j}$,  $$\Pd(|C^{(n)}_{t}(v)|> k)\leq
\Pd(\sum_{i=1}^{k}\xi^{(n)}_{t,i}\geq k)\leq
\Pd(\sum_{i=1}^{k}\bar{\zeta}^{(n)}_{t,i}\geq k).$$ 
 The moment-generating function  of $\bar{\zeta}_{nt,x}^{(n)}$ is finite on 
$[0,\log(\epsilon+1)[$ and is equal to $$\Ed(e^{\theta
\bar{\zeta}_{nt,x}^{(n)}})=\exp\Big(-nt\big(\log(1+\frac{1}{n\epsilon})-e^{
-\theta}\log(1+\frac{e^{\theta}}{n(\epsilon+1-e^{\theta})})\big)\Big).$$ The
moment-generating function of the compound Poisson distribution
$\CPois(\frac{t}{\epsilon(\epsilon+1)},\Geo)$ is
$L_{t,\epsilon}(\theta)=\exp(-\frac{t}{\epsilon}+\frac{t}{\epsilon+1-e^{\theta}}
)$ for every $\theta\in[0,\log(1+\epsilon)[$. 
Therefore, 
$\Ed(e^{\theta\bar{\zeta}_{nt,x}^{(n)}})=
L_{t,\epsilon}(\theta)\exp(tg_n(\theta))$ where
$$g_n(\theta)=\sum_{j=1}^{+\infty}\frac{e^{\theta j}-1}{(j+1)(\epsilon+1)^{j+1}}
\Big(1-(1-\frac{1}{n})^{j+1}-\frac{j+1}{n}\Big)\leq 0\;\text{for}\; \theta\geq 0.$$ 
 By Markov's inequality:
$$\Pd(|C^{(n)}_{nt}(x)|> k)\leq
\Ed(e^{\theta\bar{\zeta}_{nt,x}^{(n)}})^ke^{-k\theta}\leq
\exp\Big(-k\big(\theta-\log(L_{t,\epsilon}(\theta))
\big)\Big)\quad \forall 0<\theta<\log(\epsilon+1). 
$$ 
We deduce that for every $ k\in \NN^*$, $\Pd(|C^{(n)}_{nt}(x)|> k)\leq
\exp(-kh(t))$. 
In particular, for every $a>0$,  $\Pd(\max_{v\in[n]} |C^{(n)}_{nt}(v)|>
a\log(n))\leq n^{1-ah(t)}\exp(h(t))$ which completes the
proof. 
\end{proof}
\subsection{The supercritical regime}
When $t>\epsilon^2$,  the Galton-Watson process with 
family size distribution $\CPois(\frac{t}{\epsilon(\epsilon+1)},\Geo)$ is supercritical. 
Let  $q_{t,\epsilon}$ be the extinction probability 
of this Galton-Watson process starting with one ancestor. \\
We  show that there is a constant $c>0$ such that with high probability there is only one 
component with more than $c\log(n)$ vertices, and the size of this component is equivalent 
to $n(1-q_{t,\epsilon})$: 
\begin{bibli}[\ref{thmtransition}.(ii)]
\label{propgiantcomponent}
 Let $C^{(n)}_{nt,m_1}$ and $C^{(n)}_{nt,m_2}$ denote the first and second largest components of 
 the random graph $\mathcal{G}^{(n)}_{nt}$. Assume that $t>\epsilon^2$. \\
For every $a\in]1/2,1[$, there exist $\delta>0$ and $c>0$ such that 
$$\Pd(\mid |C^{(n)}_{nt,m_1}|-n(1-q_{t,\epsilon})\mid \geq n^a)+
\Pd(|C^{(n)}_{t,m_2}|\geq c\log(n))=O(n^{-\delta}).$$
\end{bibli}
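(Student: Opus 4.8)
The plan is to transpose the branching-process proof of the analogous statement for the Erd\"os-R\'enyi graph (\cite{bookvanderHofstad}, \cite{Bertoinbookfrag}). Throughout, fix $a\in]1/2,1[$ and a large constant $K$, set $k_n=\lceil K\log n\rceil$, and let $\mathcal{S}_n=\{x\in\KN:\ |C^{(n)}_{nt}(x)|\le k_n\}$ and $\mathcal{L}_n=\KN\setminus\mathcal{S}_n$ be the sets of vertices lying in ``small'' and ``large'' components. I would proceed in three stages: (1) pin down $|\mathcal{L}_n|$ by a first- and second-moment computation; (2) show that no component has size strictly between $k_n$ and $\beta n$ for a suitable small $\beta>0$; (3) show that at most one component is macroscopic, so that $\mathcal{L}_n$ reduces to the giant component.

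\textbf{First and second moments of $|\mathcal{L}_n|$.} By Theorem~\ref{thmsub}, $\Pd(x\in\mathcal{S}_n)=\Pd(T^{(1)}_{\epsilon,t}\le k_n)+O(k_n^{2}/n)$. Since $t>\epsilon^{2}$, the Galton-Watson process underlying $T^{(1)}_{\epsilon,t}$ is supercritical with survival probability $1-q_{t,\epsilon}$, and, conditioned on extinction, it is subcritical with light-tailed offspring (Appendix~\ref{propGW}), so $\Pd(k_n<T^{(1)}_{\epsilon,t}<\infty)=O(r^{k_n})$ for some $r\in]0,1[$; choosing $K$ with $K\log(1/r)>1$ gives $\Ed|\mathcal{L}_n|=n(1-q_{t,\epsilon})+O(\log^{2}n)$. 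For the variance I would, for $x\ne y$, condition on the exploration of $C^{(n)}_{nt}(x)$: on $\{x\in\mathcal{S}_n,\ y\notin C^{(n)}_{nt}(x)\}$ at most $k_n$ vertices have been revealed, and by the independence of disjoint loop sets used in the proof of Corollary~\ref{coroltwocomponents} the conditional law of $C^{(n)}_{nt}(y)$ is that of a component on $\bar K_{n-|C^{(n)}_{nt}(x)|}$ with killing parameter close to $n\epsilon$; a quantitative form of that argument, based again on Theorem~\ref{thmsub}, makes each covariance $O(\mathrm{polylog}(n)/n)$, whence $\var|\mathcal{L}_n|=O(n\log^{2}n)$ and, by Chebyshev's inequality, $\Pd\big(\mid |\mathcal{L}_n|-n(1-q_{t,\epsilon})\mid\ge n^{a}\big)=O(n^{-\delta})$ for any $\delta<2a-1$.

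\textbf{No component of intermediate size (the main obstacle).} The hard part is to show that, with probability $1-O(n^{-\delta})$, no component of $\mathcal{G}^{(n)}_{nt}$ has size in $(k_n,\beta n)$ for a suitable small $\beta>0$. This cannot be read off from Theorem~\ref{thmsub}, whose branching-process comparison (Proposition~\ref{propcomp1}) is only effective up to sizes $o(\sqrt n)$, so one must analyse the exploration process directly over a linear number of steps. The key point will be that as long as fewer than $\beta n$ vertices have been explored, the conditional mean of the number $\xi^{(n)}_{nt,j}$ of newly discovered vertices stays above $1$ by a fixed margin: it is at least $\Ed(\bar{\zeta}^{(n)}_{nt,1})$ (which tends to $t/\epsilon^{2}>1$) minus an under-count controlled, through Lemmas~\ref{lemrecoupe}--\ref{lemintersect}, by the conditional probabilities of the events $\{\bar{\zeta}^{(n,2)}_{nt,j}>0\}$, $F_{n,nt,j}$, $G_{n,nt,j}$, all of which are $O(\beta)$ on that time window. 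Hence the exploration stochastically dominates a walk with strictly positive drift up to the first time $\beta n$ vertices are explored or $-1$ is hit, and a Cram\'er-type excursion estimate shows that, once such a walk has reached height of order $k_n$, it returns to $-1$ before exploring $\beta n$ vertices only with probability $O(e^{-c'k_n})$; a union bound over the $n$ vertices (with $K$ large) finishes this stage. The delicate technical ingredient is a \emph{lower} stochastic bound on $\xi^{(n)}_{nt,j}$ mirroring the upper bound $\bar{\zeta}^{(n)}_{nt,j}$, valid while the number of explored vertices stays $o(n)$.

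\textbf{Uniqueness of the macroscopic component and conclusion.} If two disjoint components each had at least $\beta n$ vertices, there would be at least $\beta^{2}n^{2}$ pairs of their vertices, and the length-$2$ loop on such a pair belongs to $\DL^{(n)}_{nt}$ independently with probability $\Theta(t/n)$, so the two components would stay disconnected with probability $\le e^{-\Theta(\beta^{2}n)}$; a union bound over the at most $\lceil\beta^{-1}\rceil$ possible components of size $\ge\beta n$ shows that with high probability there is at most one. Combined with the previous stage, this means that with probability $1-O(n^{-\delta})$ the set $\mathcal{L}_n$ is exactly one component, necessarily $C^{(n)}_{nt,m_1}$, so that $|C^{(n)}_{nt,m_1}|=|\mathcal{L}_n|=n(1-q_{t,\epsilon})+O(n^{a})$, while every other component has at most $k_n$ vertices, i.e.\ $|C^{(n)}_{nt,m_2}|\le c\log n$ with $c=K$. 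Summing the failure probabilities, the $O(n^{-\delta})$ of the second-moment step dominating the $O(n^{1-c'K})$ and $e^{-\Theta(n)}$ of the other two, yields the announced estimate.
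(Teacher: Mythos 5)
Your first two stages (first/second moment of the number of vertices in large components, via Theorem~\ref{thmsub} and Corollary~\ref{coroltwocomponents}; and the exploration-process drift argument, controlling the under-count of $\xi^{(n)}_{nt,j}$ by the events $\{\bar{\zeta}^{(n,2)}_{nt,j}>0\}$, $F_{n,nt,j}$, $G_{n,nt,j}$) do follow the paper's plan: they correspond to Propositions~\ref{surcritcompsize}, \ref{surcritnblargecomp} and~\ref{propintermedsize}. Two remarks on stage~2. First, the paper does not use a Cram\'er-type excursion estimate; it directly applies a pointwise Chernoff bound to $\Pd(\sum_{i\le k}\tilde Y^{(n)}_{t,i}\le (c_2+1)k-1)$ at each $k\in[c_1\log n,n^{\beta}]$ and sums. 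Second, and more importantly, the paper does not merely exclude intermediate \emph{component sizes}: Proposition~\ref{propintermedsize} shows that for every surviving exploration the \emph{active boundary} $|A_k(v)|$ stays above $c_2 k$ up to $k=n^{\beta}$. That stronger conclusion is precisely what the uniqueness step needs, and is what your plan fails to set up.

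The genuine gap is in your uniqueness step. You argue that if two disjoint components each have $\ge\beta n$ vertices, then among the $\ge\beta^2 n^2$ cross pairs some length-$2$ loop would be present, so the configuration has probability $\le e^{-\Theta(\beta^2 n)}$. This is circular: the event ``two disjoint components each of size $\ge\beta n$'' already entails that no loop joins them, so you cannot appeal to fresh independence of length-$2$ loops between the two observed sets. The standard way out is either a sprinkling argument (split time into $[0,nt_1]$ and $(nt_1,nt]$ and use the increment's independence), which you do not invoke, or the paper's device: stop the two explorations at step $n^{\beta}$, condition on $\mathcal{F}_{n^{\beta}}$, and observe that the loops joining the two active boundaries $A_{n^{\beta}}(x_1)$, $A_{n^{\beta}}(x_2)$ have \emph{not yet been revealed} by the breadth-first procedure (which only probes loops through explored vertices). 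Because the boundaries have size $\ge c_2 n^{\beta}$ with $\beta>1/2$, the conditional probability that no unrevealed loop joins them is $\exp(-\Theta(n^{2\beta-1}))$ (Proposition~\ref{uniquegiantcomp}). This is why the paper needs stage~3 to control $|A_k(v)|$ rather than just $|C^{(n)}_{nt}(v)|$, and why $\beta$ is taken in $]1/2,1[$ rather than a small constant as in your proposal.
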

The proof consists of four steps: 
\begin{enumerate}
 \item In the first step, we show that a vertex has a component of size greater than $c\log(n)$ with a probability equivalent to 
 the Galton-Watson process survival probability $1-q_{t,\epsilon}$. 
 \begin{prop}
 \label{surcritcompsize}
  Let $X$ denote a $\CPois(\frac{t}{\epsilon(\epsilon+1)},\Geo)$-distributed random variable with $t>\epsilon^2$. 
  Set $I_t=\sup_{\theta\geq 0}(-\log  \Ed(e^{-\theta X})-\theta)$. \\
  For every $a>I_{t}^{-1}$, $\Pd(|C^{(n)}_{nt}(v)|\geq a\log(n))=1-q_{t,\epsilon}+O(\frac{\log^2(n)}{n})$. 
 \end{prop}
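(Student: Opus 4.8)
The plan is to combine the component-exploration coupling with a two-sided estimate on the survival of the associated Galton--Watson process. First I would use the upper bound $|C^{(n)}_{nt}(v)|\le\bar T^{(n)}_{nt}$ from Section \ref{sect:exploration} together with the comparison estimate $|\Pd(\bar T^{(n)}_{nt}\ge k)-\Pd(T^{(1)}_{\epsilon,t}\ge k)|\le \frac{3t(k-1)}{2n\epsilon^2}$ obtained in Proposition \ref{propcouplingtotalpopul} (via Propositions \ref{propcomp1} and \ref{propcouplingtotalpopul}), applied with $k=\lfloor a\log(n)\rfloor$. This shows
\[
\Pd(|C^{(n)}_{nt}(v)|\ge a\log(n))\le \Pd\big(T^{(1)}_{\epsilon,t}\ge a\log(n)\big)+O\Big(\frac{\log(n)}{n}\Big).
\]
Since $t>\epsilon^2$ the Galton--Watson process is supercritical, and $\Pd(T^{(1)}_{\epsilon,t}=\infty)=1-q_{t,\epsilon}$; moreover its total-progeny tail $\Pd(a\log(n)\le T^{(1)}_{\epsilon,t}<\infty)$ decays at a polynomial-in-$n$ rate, in fact it is $O(n^{-aI_t})$ by a large-deviation bound on the random walk whose first return to $-1$ encodes the total progeny (this is where $I_t=\sup_{\theta\ge0}(-\log\Ed(e^{-\theta X})-\theta)$ enters: a Chernoff bound on $\Pd(X_1+\cdots+X_k\le k-1)$ for the partial sums of the $\CPois(\frac{t}{\epsilon(\epsilon+1)},\Geo)$ offspring variables gives $\Pd(k\le T^{(1)}_{\epsilon,t}<\infty)\le e^{-kI_t}$, so for $a>I_t^{-1}$ this is $o(\log^2(n)/n)$). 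Hence the upper bound $\Pd(|C^{(n)}_{nt}(v)|\ge a\log(n))\le 1-q_{t,\epsilon}+O(\log^2(n)/n)$ follows.

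For the matching lower bound I would run the exploration procedure for exactly $k_n=\lceil a\log(n)\rceil$ steps and compare $\xi^{(n)}_{nt,j}$ with the Galton--Watson offspring $\bar\zeta^{(n)}_{nt,j}$ \emph{from below}. On the event that the exploration has not yet died out after $k_n$ steps, the number of explored-plus-active vertices is at most $O(\log(n))$, so by Proposition \ref{convvoisins} the number of new neighbours found at step $j$ has total-variation distance $O(\log(n)/n)$ from $\CPois(\frac{t}{\epsilon(\epsilon+1)},\Geo)$; summing the discrepancies over the $k_n$ steps (together with the loss terms in Proposition \ref{propcomp1} bounding $\Pd(\xi^{(n)}_{nt,j}<\bar\zeta^{(n)}_{nt,j}\mid\mathcal F_{j-1})$ by $O(\log(n)/n)$ per step) gives a coupling under which $|C^{(n)}_{nt}(v)|\wedge k_n$ and $T^{(1)}_{\epsilon,t}\wedge k_n$ differ with probability $O(\log^2(n)/n)$. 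Since $\Pd(T^{(1)}_{\epsilon,t}\ge k_n)\to 1-q_{t,\epsilon}$ with the same polynomial error as above, this yields $\Pd(|C^{(n)}_{nt}(v)|\ge a\log(n))\ge 1-q_{t,\epsilon}+O(\log^2(n)/n)$, and the two bounds combine to the claimed estimate.

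The main obstacle is the lower bound: the upper bound is essentially a soft consequence of results already proved, but for the lower bound one must control the exploration over a growing number $k_n\asymp\log(n)$ of steps while the set $H_{k_n}\cup A_{k_n}$ of touched vertices itself grows, so the per-step error $O(|H_{j-1}\cup A_{j-1}|/n)$ is only $O(\log(n)/n)$ \emph{provided} the exploration has not escaped its typical size; one therefore needs to first establish that conditionally on non-extinction the active set stays $O(\log n)$ through step $k_n$ (again via the subexponential total-progeny tail of $T^{(1)}_{\epsilon,t}$), and then sum the errors to get the $O(\log^2(n)/n)$ rate. Keeping the error truly $O(\log^2(n)/n)$ rather than, say, $O(\log^3(n)/n)$ requires being a little careful that the number of steps is $O(\log n)$ and each step contributes $O(\log n /n)$, which is exactly the bookkeeping done in Proposition \ref{propcomp1}.
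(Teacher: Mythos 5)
Your argument reaches the right conclusion and uses the right ingredients (the exploration/Galton--Watson coupling, and the exponential tail bound on the total progeny of a supercritical branching process conditioned to be finite), but the lower-bound half of the argument is unnecessary and is precisely where you have misread the earlier results.

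Theorem~\ref{thmsub}, which is already proved at this point in the paper, gives a \emph{two-sided} estimate: $\Pd(|C^{(n)}_{nt}(x)|\leq k_n)-\Pd(T^{(1)}_{\epsilon,t}\leq k_n)=O(k_n^2/n)$, which is equivalent to $\Pd(|C^{(n)}_{nt}(x)|\geq k_n)-\Pd(T^{(1)}_{\epsilon,t}\geq k_n)=O(k_n^2/n)$. Likewise Proposition~\ref{propcomp1} is stated with an absolute value: $\bigl|\Pd(|C^{(n)}_{nt}(x)|\leq k)-\Pd(\bar T^{(n)}_{nt}\leq k)\bigr| \leq B_1 k^2/n+B_2 k^4/n^2$; the one-sidedness of the coupling $|C^{(n)}_{nt}(v)|\le \bar T^{(n)}_{nt}$ merely fixes the sign of the difference, so bounding $\Pd(|C^{(n)}_{nt}(x)|\leq k\ \text{and}\ \bar T^{(n)}_{nt}>k)$ controls the discrepancy in both directions. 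You treat these as yielding only an upper bound and then construct a whole second coupling argument for the matching lower bound; that argument is sketchy, re-proves parts of Proposition~\ref{propcomp1} from scratch, and is simply not needed. The paper's proof is much shorter: apply Theorem~\ref{thmsub} with $k_n=a\log(n)$ to get $\Pd(|C^{(n)}_{nt}(v)|\geq a\log(n))=\Pd(T^{(1)}_{\epsilon,t}\geq a\log(n))+O(\log^2(n)/n)$, then use that $\Pd(T^{(1)}_{\epsilon,t}=+\infty)=1-q_{t,\epsilon}$ and (as you correctly do) the bound $\Pd(k\leq T^{(1)}_{\epsilon,t}<\infty)\leq e^{-kI_t}/(1-e^{-I_t})$ (Theorem~3.8 in \cite{bookvanderHofstad}) so that $\Pd(a\log(n)\leq T^{(1)}_{\epsilon,t}<\infty)=O(n^{-aI_t})=o(1/n)$ for $a>I_t^{-1}$. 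Your identification of $I_t$ with the Chernoff rate for the encoding random walk is correct and matches the cited result, so the large-deviation part of your argument is fine.
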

 \item For $k\in\NN$, let $Z_{nt}(k)$ denote the number of vertices that belong to a component 
 of size greater than or equal to $k$  at time $nt$. 
In the second step,  we study the first two moments of $Z_{nt}(k)$  in order to prove:
 \begin{prop}
 \label{surcritnblargecomp}
  For every $b\in]1/2;1[$, there exists $\delta>0$ such that if $a>I_{t}^{-1}$ then 
  $\Pd(|Z_{nt}(a\log(n))-n(1-q_{t,\epsilon})| > n^{b})=O(n^{-\delta})$.
 \end{prop}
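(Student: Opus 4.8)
The plan is to control the first two moments of $Z:=Z_{nt}(a\log n)=\sum_{v\in\KN}\un_{\{|C^{(n)}_{nt}(v)|\ge a\log n\}}$ and to conclude by Chebyshev's inequality. By the symmetry of the vertices of $\bar K_n$ (property (i) in the proof of Corollary~\ref{coroltwocomponents}), $\Ed[Z]=n\,\Pd(|C^{(n)}_{nt}(1)|\ge a\log n)$, which by Proposition~\ref{surcritcompsize} equals $n(1-q_{t,\epsilon})+O(\log^2 n)$; in particular $|\Ed[Z]-n(1-q_{t,\epsilon})|\le\tfrac12 n^{b}$ for $n$ large, so it suffices to prove $\var(Z)=O(n\,(\log n)^{c})$ for some $c>0$, since then $\Pd(|Z-n(1-q_{t,\epsilon})|>n^{b})\le 4\var(Z)/n^{2b}=O(n^{1-2b}(\log n)^{c})$, which is the statement for any $\delta\in(0,2b-1)$.

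For the second moment, set $k=a\log n$ and write $\Ed[Z^2]=\Ed[Z]+n(n-1)\Pd(|C^{(n)}_{nt}(1)|\ge k,\,|C^{(n)}_{nt}(2)|\ge k)$. Splitting the joint event according to whether $2\in C^{(n)}_{nt}(1)$, using the symmetry relation $\Pd(2\in C^{(n)}_{nt}(1)\mid|C^{(n)}_{nt}(1)|=j)=\tfrac{j-1}{n-1}$, and conditioning on $C^{(n)}_{nt}(1)=S$ in the complementary event (whence, by property (ii) in the proof of Corollary~\ref{coroltwocomponents}, the loop-cluster graph restricted to $\KN\setminus S$ is again a loop-cluster graph, on $n-|S|$ vertices with killing parameter $\epsilon_{n,n-|S|}=\epsilon+\tfrac{|S|}{n-|S|}(\epsilon+1)$, and is independent of $\{C^{(n)}_{nt}(1)=S\}$), one gets
\[
\var(Z)=nM-n^{2}p^{2}+n(n-1)\,\Ed\Big[\big(1-\tfrac{|C^{(n)}_{nt}(1)|-1}{n-1}\big)\un_{\{|C^{(n)}_{nt}(1)|\ge k\}}\,\Psi\big(|C^{(n)}_{nt}(1)|\big)\Big],
\]
where $p:=\Pd(|C^{(n)}_{nt}(1)|\ge k)$, $\Psi(j):=\Pd\big(|C^{(n-j)}_{\epsilon_{n,n-j},\,nt}(2)|\ge k\big)$, and $M:=\Ed[|C^{(n)}_{nt}(1)|\un_{\{|C^{(n)}_{nt}(1)|\ge k\}}]=\Ed[|C^{(n)}_{nt}(1)|]+O(\log n)=1+(n-1)\Pd(2\in C^{(n)}_{nt}(1))+O(\log n)$ (the correction using $\sum_{B\in\mathcal C_{nt},\,|B|<k}|B|^2\le kn$). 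Hence $\var(Z)=O(n(\log n)^c)$ follows from: \textbf{(a)} $\Pd(2\in C^{(n)}_{nt}(1))=(1-q_{t,\epsilon})^{2}+O\big((\log n)^{c}/n\big)$, which gives $nM-n^{2}p^{2}=O(n(\log n)^{c})$ (recall $p=(1-q_{t,\epsilon})+O(\log^2 n/n)$); and \textbf{(b)} $\Psi(j)\le n^{-aI_t+o(1)}$ for $j$ in the window $[n(1-q_{t,\epsilon})-n^{b},\,n(1-q_{t,\epsilon})+n^{b}]$, together with $\Pd\big(a\log n\le|C^{(n)}_{nt}(1)|\le n(1-q_{t,\epsilon})-n^{b}\big)=O(n^{-aI_t})$ and $\Pd\big(|C^{(n)}_{nt}(1)|\ge n(1-q_{t,\epsilon})+n^{b}\big)$ super-polynomially small: indeed, as $a>I_t^{-1}$, the last term of the display is then $O(n^{2-aI_t+o(1)})=o(n)$.

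Input \textbf{(b)} is the easier of the two and is where the hypothesis $t>\epsilon^{2}$ is used. The atypical-size estimates for $|C^{(n)}_{nt}(1)|$ come from the exploration process: conditionally on survival past step $a\log n$, the active-set walk $(|A_j|)_j$ has positive drift $\tfrac{t}{\epsilon^{2}}-1$ while $j+|A_j|\lesssim n(1-q_{t,\epsilon})$, hence reaches height of order $\log n$ at step $a\log n$, and an exponential-martingale (Chernov) bound started from that height controls the probability of hitting $0$ before step $n(1-q_{t,\epsilon})-n^{b}$ — with rate $I_t$, giving $O(n^{-aI_t})$, and $n^{2}\cdot n^{-aI_t}=o(n)$ exactly because $a>I_t^{-1}$ — and symmetrically of surviving beyond step $n(1-q_{t,\epsilon})+n^{b}$. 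For the residual subcriticality, property (ii) shows that when $j\approx n(1-q_{t,\epsilon})$ the residual graph is asymptotically a loop-cluster graph on $\approx q_{t,\epsilon}n$ vertices with killing parameter $\approx\epsilon+\tfrac{1-q_{t,\epsilon}}{q_{t,\epsilon}}(\epsilon+1)$, whose associated Galton–Watson offspring mean is $\tfrac{t q_{t,\epsilon}}{(\epsilon+1-q_{t,\epsilon})^{2}}$; this is $<1$ since the fixed-point equation $\exp\!\big(\tfrac{t(q-1)}{\epsilon(\epsilon+1-q)}\big)=q$ for $q=q_{t,\epsilon}$ reduces the inequality to $\epsilon q\log(1/q)<(1-q)(\epsilon+1-q)$, which holds on $(0,1)$ (the difference vanishes at $q=1$ and is strictly decreasing in $q$). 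Moreover this residual offspring law is precisely the dual of $X\sim\CPois(\tfrac{t}{\epsilon(\epsilon+1)},\Geo)$ conditioned on extinction, so from the identity $\theta-\log\Ed[e^{\theta X'}]=(\theta+\log q)-\log\Ed[e^{(\theta+\log q)X}]$ its Cramér rate at $1$ equals that of $X$, namely $I_t$; Theorem~\ref{thmtransition}.(i) applied to the $n-j$ remaining vertices then yields $\Psi(j)\le n^{-aI_t+o(1)}$ throughout the window.

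The main obstacle is input \textbf{(a)}: pinning the two-point connectivity $\Pd(2\in C^{(n)}_{nt}(1))$ — equivalently $\Ed[|C^{(n)}_{nt}(1)|]=1+(n-1)\Pd(2\in C^{(n)}_{nt}(1))$ — to its limit $(1-q_{t,\epsilon})^{2}$ with error only $O((\log n)^{c}/n)$, rather than the crude $O(n^{b-1})$ that the window estimates of \textbf{(b)} give directly; with only the latter one obtains $\var(Z)=O(n^{1+b})$, which is insufficient for any $b<1$. I would establish it either from an (almost) explicit computation of $\Pd(2\in C^{(n)}_{nt}(1))$ via the semigroup formula of Proposition~\ref{semigroupC} — decomposing over the component $S$ of vertex $1$ and using the independence of the loops inside $S$, the loops crossing $S$–$S^{c}$, and the loops inside $S^{c}$ — combined with the atypical-size estimates of \textbf{(b)} to identify the limit, or from a diffusion/martingale approximation of the exploration walk near the end of the exploration of the giant component; obtaining the required polylogarithmic accuracy is the delicate step.
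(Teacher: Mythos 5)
Your overall strategy --- control the first two moments of $Z=Z_{nt}(a\log n)$ and conclude by Chebyshev --- is the same as the paper's, and your first-moment estimate via Proposition~\ref{surcritcompsize} matches. However, your variance calculation takes a route that runs into a genuine obstacle, which you yourself flag as unresolved; that obstacle is avoidable by a different decomposition, and this is exactly what the paper does.

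Concretely, you decompose $\Ed[Z^2]$ via the joint event $\{|C^{(n)}_{nt}(1)|\ge k,\,|C^{(n)}_{nt}(2)|\ge k\}$, which forces you to control $\Pd(2\in C^{(n)}_{nt}(1))$, i.e.\ $\Ed[|C^{(n)}_{nt}(1)|]$, to within $O((\log n)^{c}/n)$ of its limit $(1-q_{t,\epsilon})^{2}$. This is a genuinely delicate two-point connectivity estimate for the giant component, and you do not prove it --- input~(a) is a gap, not a reduction. Your input~(b) (residual subcriticality of the dual process, window estimates for $|C^{(n)}_{nt}(1)|$) is a substantial amount of auxiliary work and, as it turns out, is not needed at all.

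The paper instead writes $\var(Z)=\var\bigl(\sum_{x}\un_{\{|C^{(n)}_{nt}(x)|<k\}}\bigr)$ (the covariances are of course identical) and splits the resulting double sum according to whether $y\in C^{(n)}_{nt}(x)$. The ``same-component'' part is bounded by $nk$. For the ``different-component'' part, conditioning on $|C^{(n)}_{nt}(x)|=h<k$ reduces the inner factor to $\Pd\bigl(|C^{(n,h)}_{nt}(1)|<k\bigr)$ in the residual loop graph on $n-h$ vertices, and then the decisive step is a direct coupling of $\mathcal{G}^{(n,h)}_{nt}$ with $\mathcal{G}^{(n)}_{nt}$: the difference $\Pd\bigl(|C^{(n,h)}_{nt}(1)|<k\bigr)-\Pd\bigl(|C^{(n)}_{nt}(1)|<k\bigr)$ is bounded by the probability that some loop in $\DL^{(n)}_{nt}$ hits both a set of $k$ vertices and a disjoint set of $h$ vertices, namely $1-\bigl(1+\tfrac{kh}{n\epsilon(k+h+n\epsilon)}\bigr)^{-nt}\le 1-\bigl(1+\tfrac{k^{2}}{n^{2}\epsilon^{2}}\bigr)^{-nt}=O(k^{2}/n)$. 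Crucially this bound is uniform over $h<k$ and involves only \emph{small} set sizes $h,k=O(\log n)$; no information about the giant component is used. This yields $\var(Z)\le nk+n^{2}\bigl(1-(1+k^{2}/(n^{2}\epsilon^{2}))^{-nt}\bigr)=O(n\log^{2}n)$, and the proposition follows by Chebyshev for any $\delta<2b-1$.

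So the missing idea is the choice of complementary events plus the quantitative loop-coupling bound between $\mathcal{G}^{(n)}_{nt}$ and $\mathcal{G}^{(n,h)}_{nt}$: working with the small-component indicators $\un_{\{|C|<k\}}$ replaces your hard two-point connectivity estimate with a one-line computation of the $\mu$-measure of loops crossing two small vertex sets. (Incidentally, once this coupling bound is in hand, your estimate~(a) follows a posteriori from $\text{Cov}=O((\log n)^{2}/n)$ together with your~(b); but the direct route is to not need~(a) in the first place.)
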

 \item The aim of the third step is to prove that with high probability, there is no component 
 of size between $c_1\log(n)$ and $c_2n^\beta$ for any constant $\beta\in]1/2,1[$. More precisely, 
 we show the following result on $A_{k}(v)$, the set of active vertices  in step $k$ of the 
 exploration
 of the component of a vertex $v$:
 \begin{prop}
  \label{propintermedsize}
  Let $\beta\in]1/2,1[$. For every $0<c_2<\min(1,\frac{t}{\epsilon^2}-1)$, there exists $\delta(c_2)>0$ such that for 
  $c_1>\delta^{-1}(c_2)$, 
  $$\Pd\big(\exists v\in\KN,\ A_{c_1\log(n)}(v)\neq \emptyset\text{ and } 
  \exists k\in[c_1\log(n),n^{\beta}],\ |A_k(v)|\leq c_2k\big)=O(n^{1-c_1\delta(c_2)}).$$
 \end{prop}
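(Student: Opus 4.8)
The plan is to reduce, via a union bound over the $n$ possible roots, to a single‑vertex estimate: I will show that for a fixed vertex $v$ the event $\{A_{c_1\log n}(v)\neq\emptyset,\ \exists k\in[c_1\log n,n^{\beta}]:|A_k(v)|\leq c_2k\}$ has probability $O(n^{-c_1\delta(c_2)})$. Running the component exploration from $x_1=v$, while the exploration is alive $|A_k(v)|=1+\sum_{i=1}^{k}(\xi^{(n)}_{nt,i}-1)$ with $\xi^{(n)}_{nt,i}\geq 0$. Since the mean of $\CPois(\tfrac{t}{\epsilon(\epsilon+1)},\Geo)$ is $t/\epsilon^{2}>1+c_2$, with $\eta:=\tfrac12(t/\epsilon^{2}-1-c_2)>0$ the two ingredients are (a) a lower bound $\Ed[\xi^{(n)}_{nt,i}\mid\mathcal{F}_{i-1}]\geq 1+c_2+\eta$ valid for $i\leq n^{\beta}$ on a high‑probability event, and (b) an exponential‑supermartingale/Chernoff bound showing $\{|A_k(v)|\leq c_2k\}$ is exponentially unlikely in $k$; only $\beta<1$ is used.

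For (a), at step $i$ I would compare $\xi^{(n)}_{nt,i}$ with the number $L_i:=|\Vois_{nt,x_i}(B_i)|$ of neighbours of $x_i$ built from loops contained in $B_i:=(\KN\setminus(H_{i-1}\cup A_{i-1}))\cup\{x_i\}$, of cardinality $n-m_{i-1}+1$ where $m_{i-1}:=|H_{i-1}|+|A_{i-1}|=(i-1)+|A_{i-1}|$. Any vertex of $B_i\setminus\{x_i\}$ reached by such a loop is genuinely new, so $\xi^{(n)}_{nt,i}\geq L_i$. Every loop examined during the first $i-1$ steps passes through $H_{i-1}\subseteq\KN\setminus B_i$, and loop ensembles carried by disjoint vertex sets are independent (property (ii) in the proof of Corollary~\ref{coroltwocomponents}); hence the loops contained in $B_i$ are independent of $\mathcal{F}_{i-1}$, and by exchangeability of the vertices the conditional law of $L_i$ given $\mathcal{F}_{i-1}$ is that of $|\Vois_{nt,x}(\KN\setminus V)|$ for $|V|=m_{i-1}-1$. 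Proposition~\ref{convvoisins} then bounds its total‑variation distance to $\CPois(\tfrac{t}{\epsilon(\epsilon+1)},\Geo)$ by $O(m_{i-1}/n)$, and combining this with the uniform control of second moments coming from Proposition~\ref{TVlengthneig} (as $L_i$ is dominated by $\CPois(ntb_n,\nu_n)$) yields $\Ed[L_i\mid\mathcal{F}_{i-1}]\geq t/\epsilon^{2}-C\sqrt{m_{i-1}/n}$.

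To make $m_{i-1}/n$ negligible I need an a priori upper bound on $|A_k(v)|$. From $\xi^{(n)}_{nt,i}\leq\bar{\zeta}^{(n)}_{nt,i}$ with $(\bar{\zeta}^{(n)}_{nt,i})_i$ i.i.d.\ of law $\CPois(nt\beta_{n,\epsilon},\nu_{n,\epsilon})$ (mean tending to $t/\epsilon^{2}$, moment‑generating function finite near $0$, as computed in the proof of the first part of Theorem~\ref{thmtransition}) one gets $|A_k(v)|\leq 1+\sum_{i=1}^{k}(\bar{\zeta}^{(n)}_{nt,i}-1)$, and a maximal Chernoff bound shows that, if $C_1=C_1(c_1)$ is large enough, the event $\mathcal{G}_v:=\{|A_k(v)|\leq C_1\max(k,\log n)\text{ for all }k\leq n^{\beta}\}$ satisfies $\Pd(\mathcal{G}_v^{c})=O(n^{-2c_1})$. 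On $\mathcal{G}_v$, $m_{i-1}\leq C_1'n^{\beta}$ for $i\leq n^{\beta}$, so $m_{i-1}/n=O(n^{\beta-1})\to 0$, and therefore, for $n$ beyond a threshold depending only on $c_1,c_2,t,\epsilon$, the bound in (a) gives $\Ed[\xi^{(n)}_{nt,i}\mid\mathcal{F}_{i-1}]\geq 1+c_2+\eta$ at every step $i\leq n^{\beta}$ before $\mathcal{G}_v$ is violated or the exploration dies; the finitely many smaller $n$ are absorbed into the implied constant.

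Finally, for (b), let $\tau$ be the first step at which $A_k(v)=\emptyset$ or $|A_k(v)|>C_1\max(k,\log n)$. Using $\xi^{(n)}_{nt,i}\geq L_i\geq 0$, the inequality $e^{-x}\leq 1-x+x^{2}/2$ for $x\geq 0$, the conditional‑mean bound, and a uniform bound $M$ on $\Ed[L_i^{2}\mid\mathcal{F}_{i-1}]$, one gets $\Ed[e^{-\theta(\xi^{(n)}_{nt,i}-1)}\mid\mathcal{F}_{i-1}]\leq e^{-\theta(c_2+\eta)+M\theta^{2}/2}$ on $\{\tau>i-1\}$, and peeling the exponential supermartingale gives, for $c_1\log n\leq k\leq n^{\beta}$,
\[
\Pd\bigl(|A_k(v)|\leq c_2k,\ \tau>k-1\bigr)\leq e^{\theta c_2 k}\,\Ed\bigl[e^{-\theta\sum_{i=1}^{k}(\xi^{(n)}_{nt,i}-1)}\un_{\{\tau>k-1\}}\bigr]\leq e^{-k(\theta\eta-M\theta^{2}/2)},
\]
which at $\theta=\eta/M$ equals $e^{-\kappa k}$ with $\kappa=\eta^{2}/(2M)>0$. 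On $\mathcal{G}_v\cap\{A_{c_1\log n}(v)\neq\emptyset\}$ the first $k\in[c_1\log n,n^{\beta}]$ with $|A_k(v)|\leq c_2k$ satisfies $\tau>k-1$, so the bad event for $v$ lies in $\mathcal{G}_v^{c}\cup\bigcup_{k=c_1\log n}^{n^{\beta}}\{|A_k(v)|\leq c_2k,\ \tau>k-1\}$, of probability $O(n^{-2c_1})+\sum_{k\geq c_1\log n}e^{-\kappa k}=O(n^{-c_1\delta(c_2)})$ with $\delta(c_2)=\min(2,\kappa)$; a union bound over the $n$ roots gives the statement. The main obstacles I expect are the identification of the conditional law of $L_i$ and, above all, turning the total‑variation estimate of Proposition~\ref{convvoisins} into a lower bound on $\Ed[\xi^{(n)}_{nt,i}\mid\mathcal{F}_{i-1}]$ uniform over $i\leq n^{\beta}$, together with the bookkeeping keeping the exponent in $\Pd(\mathcal{G}_v^{c})$ proportional to $c_1$ so that the union over roots still produces $O(n^{1-c_1\delta(c_2)})$.
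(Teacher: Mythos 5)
Your proposal is correct, but it takes a genuinely different route from the paper. The paper lower-bounds $\xi^{(n)}_{nt,i}$ by the count of vertices hit among the first $m_n = n - \lceil 2n^\beta\rceil$ \emph{deterministically many} neutral sites; this makes the lower-bounding variables $(\tilde{Y}^{(n)}_{t,i})_i$ genuinely i.i.d.\ with the law of $|\Vois_{nt,1}(\KN[m_n+1])|$, so a single Chernoff bound finishes step~(b), using only the convergence of the log-moment-generating function of $\tilde{Y}^{(n)}_{t,1}$ to that of $\CPois(\tfrac{t}{\epsilon(\epsilon+1)},\Geo)$ and the fact that $c_2+1<t/\epsilon^2$. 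The analogue of your event $\mathcal{G}_v$ is then obtained for free: with $\tau = T^{(n)}_t\wedge\min\big(k,\ \sum_{i\leq k}\xi^{(n)}_{t,i}\geq 2n^\beta\big)$, the first bad step automatically satisfies $k\leq\tau$, because $\sum_{i\leq k}\xi^{(n)}_{t,i}\geq 2n^\beta$ together with $k\leq n^\beta$ forces $|A_k|>n^\beta\geq c_2k$ (as $c_2<1$), contradicting $|A_k|\leq c_2k$. Your version keeps all the neutral sites (a random number), which is why you need the extra machinery: (i) a conditional-mean lower bound obtained by upgrading the total-variation estimate of Proposition~\ref{convvoisins} to a mean bound via optimal coupling and a uniform conditional second-moment bound; (ii) the auxiliary high-probability event $\mathcal{G}_v$ with a $c_1$-dependent constant $C_1$ tuned so that $\Pd(\mathcal{G}_v^c)=O(n^{-2c_1})$; and (iii) a peeled exponential supermartingale in place of an i.i.d.\ Chernoff bound. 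These steps are all sound---the exchangeability argument identifying the conditional law of $L_i$, the coupling-plus-Cauchy--Schwarz mean inequality, the absorption of the $c_1$-dependence into the threshold $n^*$ (so $\eta$, $M$, $\kappa$ stay $c_1$-free), and the union bound giving $O(n^{1-c_1\delta})$ all check out. The paper's fixed-$m_n$ device is shorter and needs no second-moment bookkeeping; your route is longer but makes the supermartingale structure explicit and avoids having to notice that the first bad step is automatically below $\tau$.
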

\item In the fourth step, we deduce from Proposition \ref{propintermedsize} that with high 
probability there  exists at most one  component of size greater than $a\log(n)$:
\begin{prop}
  \label{uniquegiantcomp}
  For every $0<c_2<\min(1,\frac{t}{\epsilon^2}-1)$, there exists $\delta(c_2)>0$ such that 
  for $c_1>\delta^{-1}(c_2)$, 
  $$\Pd\big(\text{there exist two distinct components of size greater than}\; 
  c_1\log(n)\big)=O(n^{1-c_1\delta(c_2)}).$$
 \end{prop}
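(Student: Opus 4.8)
The plan is to bound the probability that there are two large components by a union bound over pairs of vertices lying in distinct components of size greater than $c_1\log n$, and for each such pair to exhibit a macroscopic family of potential loops of length two, each of which would already have merged the two components; their simultaneous absence will then have super-polynomially small probability.

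First I would fix $\beta\in\,]1/2,1[$, let $G$ denote the exceptional event of Proposition \ref{propintermedsize} (so that $\Pd(G)=O(n^{1-c_1\delta(c_2)})$ when $c_1>\delta^{-1}(c_2)$), and note that the event ``there exist two distinct components of size greater than $c_1\log n$'' is contained in $\bigcup_{v\neq w}E_{v,w}$, the union being over the at most $n^{2}$ pairs of vertices, where $E_{v,w}=\{C^{(n)}_{nt}(v)\neq C^{(n)}_{nt}(w)\}\cap\{|C^{(n)}_{nt}(v)|>c_1\log n\}\cap\{|C^{(n)}_{nt}(w)|>c_1\log n\}$. On $G^{c}$, for any vertex $u$ with $|C^{(n)}_{nt}(u)|>c_1\log n$ the component exploration started from $u$ is still alive at step $c_1\log n$, hence $|A_k(u)|>c_2 k$ for every $k\in[c_1\log n,n^{\beta}]$; in particular the exploration reaches step $\lfloor n^{\beta}\rfloor$, so that $|C^{(n)}_{nt}(u)|\geq\lfloor n^{\beta}\rfloor$ and the set $A_u$ of active vertices at step $\lfloor n^{\beta}\rfloor$ satisfies $A_u\subseteq C^{(n)}_{nt}(u)$, $|A_u|>\tfrac{c_2}{2}n^{\beta}$ for $n$ large, and $A_u\cap H_u=\emptyset$ where $H_u$ denotes the set of the first $\lfloor n^{\beta}\rfloor$ explored vertices.

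The key point is that, on $E_{v,w}\cap G^{c}$, the sets $A_v$ and $A_w$ are disjoint (they lie in distinct components), no loop of $\DL^{(n)}_{nt}$ meets both $C^{(n)}_{nt}(v)$ and $C^{(n)}_{nt}(w)$, and yet the $\sigma$-algebra $\mathcal{G}$ generated by the first $\lfloor n^{\beta}\rfloor$ steps of the two explorations depends on $\DL^{(n)}_{nt}$ only through the loops passing through $H_v\cup H_w$ --- whereas a discrete loop of length two of the form $\{a,b\}$ with $a\in A_v$, $b\in A_w$ passes through neither $H_v$ nor $H_w$ (on $E_{v,w}$, $a\notin H_v$ since $a$ is active, and $a\notin C^{(n)}_{nt}(w)\supseteq H_w$, and symmetrically for $b$). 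Hence, conditionally on $\mathcal{G}$, each of the more than $\tfrac{c_2^{2}}{4}n^{2\beta}$ such loops belongs to $\DL^{(n)}_{nt}$ independently, of one another and of $\mathcal{G}$, with probability $1-e^{-nt\mu(\{a,b\})}=1-e^{-t/(n(\epsilon+1)^{2})}\geq \tfrac{t}{2n(\epsilon+1)^{2}}$ for $n$ large, so on the $\mathcal{G}$-measurable event $\{|A_v|,|A_w|>\tfrac{c_2}{2}n^{\beta},\ A_v\cap A_w=\emptyset\}$ one gets $\Pd(E_{v,w}\mid\mathcal{G})\leq\big(1-\tfrac{t}{2n(\epsilon+1)^{2}}\big)^{\lceil c_2^{2}n^{2\beta}/4\rceil}\leq e^{-c_4 n^{2\beta-1}}$ with $c_4=\tfrac{t c_2^{2}}{8(\epsilon+1)^{2}}$. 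Taking expectations and summing over the pairs then yields $\Pd(\text{two distinct components of size}>c_1\log n)\leq\Pd(G)+n^{2}e^{-c_4 n^{2\beta-1}}=O(n^{1-c_1\delta(c_2)})$, since $2\beta-1>0$ makes the last term decay faster than any power of $n$.

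The main obstacle is the measure-theoretic bookkeeping behind the claim that $\mathcal{G}$ depends on $\DL^{(n)}_{nt}$ only through the loops meeting $H_v\cup H_w$: one has to formalise the sequential ``peeling'' nature of the truncated exploration --- at each step only the loops through the vertex currently being explored are queried --- so that the ``cross'' loops $\{a,b\}$ are genuinely left untouched and therefore remain an independent Poisson family conditionally on $\mathcal{G}$; this also requires running the two explorations consistently on the event $\{C^{(n)}_{nt}(v)\neq C^{(n)}_{nt}(w)\}$. The remaining ingredients are routine: $c_1>\delta^{-1}(c_2)$ is exactly what forces the dominant term $n^{1-c_1\delta(c_2)}$ to $0$, and $\beta\in\,]1/2,1[$ (so $2\beta-1>0$) is what renders the stretched-exponential correction $n^{2}e^{-c_4 n^{2\beta-1}}$ negligible.
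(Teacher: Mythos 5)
Your proposal is correct and follows essentially the same route as the paper: reduce to the event of Proposition \ref{propintermedsize} so that both explorations, run to step $\lfloor n^{\beta}\rfloor$, produce disjoint active sets of size $\gtrsim c_2 n^{\beta}$, and then bound the probability that no loop of $\DL^{(n)}_{nt}$ links the two active sets, concluding by a union bound over pairs. The one (harmless) variation is that you lower-bound the cross-linking intensity by counting only the length-two loops $\{a,b\}$ with $a\in A_v$, $b\in A_w$, whereas the paper uses Lemma \ref{lemintersect} to compute the exact $\mu$-measure of all loops meeting both sets and obtains $\Pd(\nexists\ell,\ \ell\inter F_1\et F_2)\leq\exp(-t|F_1||F_2|/(n(\epsilon+1)^2))$; both give a bound of the form $n^{2}e^{-c\,n^{2\beta-1}}$, which is negligible compared to $n^{1-c_1\delta(c_2)}$. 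Your extra care in spelling out why the cross loops are conditionally independent of the truncated exploration $\sigma$-algebra (they avoid $H_v\cup H_w$) makes explicit what the paper leaves implicit in the conditioning on $\mathcal{F}_{n^{\beta}}$; that is a welcome clarification, not a gap.
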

  \end{enumerate}
 Assertion (ii) of Theorem \ref{thmtransition} is then a direct consequence of Proposition
\ref{surcritnblargecomp}  and Proposition \ref{uniquegiantcomp} since 
$Z_{nt}(c_1\log(n))$ is equal to the size of the largest component on the event: 
$$\{|Z_{nt}(c_1\log(n))-n(1-q_{t,\epsilon})|\leq n^{b}\}\cap 
\{\text{there is at most one component of size greater than}\;c_1\log(n)\}.$$
 The first two steps of the proof of assertion (ii) of Theorem \ref{thmtransition} are similar to the 
 first two steps detailed in \cite{bookvanderHofstad} for 
 the Erd\"os-R\'enyi random graph. The last two steps follow the proof described in 
 \cite{BordenaveNotes} for the Erd\"os-R\'enyi random graph.  
 \begin{proof}[Proof of Proposition \ref{surcritcompsize}]
  Let $v$ be a vertex. By Theorem \ref{thmsub}, for every $a>0$, 
  $$\Pd(|C^{(n)}_{nt}(v)|\geq c\log(n))=\Pd(T^{(1)}_{\epsilon,t}\geq c\log(n))+O(\frac{\log^2(n)}{n}).$$
  Moreover, $\Pd(T^{(1)}_{\epsilon,t}=+\infty)=1-q_{t,\epsilon}$.
  To complete the proof, we use the following result on the total progeny of a 
  supercritical Galton-Watson process stated in \cite{bookvanderHofstad}: 
  \begin{bibli}[3.8 in \cite{bookvanderHofstad}]
   Let $T$ denote the total progeny of a Galton-Watson process with family size distribution 
   $\nu$. 
   Assume that $\sum_{k\in\NN}k\nu(k)>1$. \\
   Then $I=\sup_{\theta\geq 0}\Big(-\theta-\log(\sum_{k=0}^{+\infty} e^{-\theta x}\nu(k))\Big)$ 
   is positive and 
   $\Pd(k\leq T<+\infty)\leq \frac{e^{-kI}}{1-e^{-I}}$.  
  \end{bibli}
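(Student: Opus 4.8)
The plan is to encode the genealogy of the Galton--Watson tree by a lattice walk and then apply a Chernov bound, exactly as in the construction of $\bar{T}^{(n)}_t$ in Section \ref{sect:exploration} and in the proof of Theorem \ref{thmtransition}.(i). Let $(X_i)_{i\geq 1}$ be \iid random variables with law $\nu$, and let $S_m = 1+\sum_{i=1}^m (X_i-1)$, so $S_0=1$; then $T$ has the law of the first hitting time of $0$ by $(S_m)$, namely $\min\{m\geq 1 : S_m=0\}$ (breadth-first exploration, $X_i$ being the number of children of the $i$-th explored individual). In particular $\{T=n\}\subseteq\{\sum_{i=1}^n X_i=n-1\}$, so for every $n\geq 1$
\[
\Pd(T=n)\ \leq\ \Pd\Big(\sum_{i=1}^n X_i=n-1\Big)\ \leq\ \Pd\Big(\sum_{i=1}^n X_i\leq n-1\Big).
\]

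Next I would control the last probability by an exponential moment. Put $\phi(\theta)=\Ed[e^{-\theta X_1}]=\sum_{k\geq 0}e^{-\theta k}\nu(k)$; since $X_1\geq 0$, $\phi(\theta)\in(0,1]$ for every $\theta\geq 0$, so no moment assumption on $\nu$ is needed. Markov's inequality applied to $e^{-\theta\sum_{i=1}^n X_i}$ gives, for every $\theta\geq 0$,
\[
\Pd\Big(\sum_{i=1}^n X_i\leq n-1\Big)\ \leq\ e^{\theta(n-1)}\phi(\theta)^n\ =\ e^{-\theta}\exp\!\big(-n(-\theta-\log\phi(\theta))\big)\ \leq\ \exp\!\big(-n(-\theta-\log\phi(\theta))\big),
\]
the last inequality because $e^{-\theta}\leq 1$. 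Taking the infimum over $\theta\geq 0$ yields $\Pd(T=n)\leq e^{-nI}$ with $I=\sup_{\theta\geq 0}(-\theta-\log\phi(\theta))$, and summing the geometric series (valid once $I>0$, i.e.\ $e^{-I}<1$)
\[
\Pd(k\leq T<+\infty)\ =\ \sum_{n\geq k}\Pd(T=n)\ \leq\ \sum_{n\geq k}e^{-nI}\ =\ \frac{e^{-kI}}{1-e^{-I}}.
\]

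To finish I would check that $I>0$ under the supercriticality hypothesis $m:=\sum_k k\nu(k)>1$. Let $\psi(\theta)=-\theta-\log\phi(\theta)$; then $\psi(0)=0$ since $\phi(0)=1$. For each fixed value of $X_1$, the map $\theta\mapsto\frac{1-e^{-\theta X_1}}{\theta}$ increases to $X_1$ as $\theta\downarrow 0$, so by monotone convergence $\frac{1-\phi(\theta)}{\theta}\uparrow\Ed[X_1]=m$, while $1-\phi(\theta)\to 0$ by dominated convergence; hence $\frac{\psi(\theta)}{\theta}=-1-\frac{\log\phi(\theta)}{\theta}\to m-1$ as $\theta\downarrow 0$ (the limit being $+\infty$ when $m=+\infty$), which is $>0$. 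Thus $\psi(\theta)>0$ for all small enough $\theta>0$, and therefore $I=\sup_{\theta\geq 0}\psi(\theta)>0$.

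There is no serious obstacle here: every step is elementary, and the only point needing care is the genealogical encoding $\{T=n\}\subseteq\{\sum_{i=1}^n X_i=n-1\}$, i.e.\ the identification of the total progeny with the first hitting time of $0$ by $(S_m)$ --- but this is precisely the lattice-walk representation already recalled in Section \ref{sect:exploration}, so nothing new is required. (Invoking instead Dwass's identity $\Pd(T=n)=\frac1n\Pd(\sum_{i=1}^n X_i=n-1)$ would give the slightly sharper $\Pd(k\leq T<+\infty)\leq\frac1k\frac{e^{-kI}}{1-e^{-I}}$, but the inclusion above already suffices for the applications in Section \ref{sect:transition}.)
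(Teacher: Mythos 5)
Your proof is correct: the inclusion $\{T=n\}\subseteq\{\sum_{i=1}^n X_i=n-1\}$ from the breadth-first/lattice-walk encoding, the Chernov bound with the Laplace transform $\phi(\theta)=\Ed[e^{-\theta X_1}]$ (which exists for all $\theta\geq 0$ with no moment hypothesis), the geometric summation, and the verification that $\psi(\theta)/\theta\to m-1>0$ as $\theta\downarrow 0$ via monotone convergence are all sound. The paper itself does not prove this statement — it quotes it as Theorem 3.8 of van der Hofstad's lecture notes — and your argument is essentially the standard proof given there, so nothing further is needed.
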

  Therefore, for every $c>I^{-1}$, $\Pd(c\log(n)\leq T^{(1)}_{\epsilon,t}<+\infty)=O(n^{-1})$ and 
  $$\Pd(|C^{(n)}_{nt}(v)|\geq c\log(n))=1-q_{t,\epsilon}+O(\frac{\log^2(n)}{n}).$$
 \end{proof}
\begin{proof}[Proof of Proposition \ref{surcritnblargecomp}]
We shall use Bienaym\'e-Chebyshev inequality to bound  
$$\Pd(|Z_{nt}(a\log(n))-n(1-q_{t,\epsilon})| > n^{b}).$$ 
As $Z_{nt}(k)=\sum_{x\in\KN}\un_{\{|C^{(n)}_{nt}(x)|\geq k\}}$, 
we deduce from Proposition \ref{surcritcompsize} that 
if  $a>I_{t}^{-1}$ then $$\Ed(Z_{nt}(a\log(n)))=-n(1-q_{t,\epsilon})+O(\log^2(n)).$$ 
We  proceed as in \cite{bookvanderHofstad} to bound the variance of $Z_{nt}(k)$. The computations
are based on the properties (i) and (ii) of $\mathcal{G}^{(n)}_{t}$ stated in the proof of 
Corollary \ref{coroltwocomponents} which implies that 
$$\Pd(|C^{(n)}_{nt}(y)|<k \mid y\not\in C^{(n)}_{nt}(x) \et |C^{(n)}_{nt}(x)|=h)-\Pd(|C^{(n)}_{nt}(y)|<k)$$
is bounded above by 
the probability that there exist loops $\ell\in \DL_{nt}^{(n)}$ passing through the two subsets 
of vertices $\{1,\ldots,k\}$ and $\{k+1,\ldots,k+h\}$. \\
We now explain in detail the computations. The variance of $Z_{nt}(k)$ is equal to the variance of  
$\sum_{x\in\KN}\un_{\{|C^{(n)}_{nt}(x)|< k\}}$. Therefore, 
$$\var(Z_{nt}(k))=\sum_{x,y\in\KN}\Big(\Pd(|C^{(n)}_{nt}(x)|< k\;\text{and}\;|C^{(n)}_{nt}(y)|< k)-
\Pd(|C^{(n)}_{nt}(x)|< k)\Pd(|C^{(n)}_{nt}(y)|< k)\Big).$$
We split $\Pd(|C^{(n)}_{nt}(x)|< k\;\text{and}\;|C^{(n)}_{nt}(y)|< k)$ into two terms depending 
on whether the vertices $x$ and $y$ belong to a same component or not:  
$\var(Z_{nt}(k))=S^{(1)}_{n}(k)+S^{(2)}_{n}(k)$ where 
\begin{alignat*}{2}
S^{(1)}_{n}(k)=&\sum_{x,y\in\KN}\Pd\big[|C^{(n)}_{nt}(x)|< k\;\text{and}\; y\in C^{(n)}_{nt}(x)\big]\\
S^{(2)}_{n}(k)=&\sum_{x,y\in\KN}\Big(\Pd\big[|C^{(n)}_{nt}(x)|< k,\ |C^{(n)}_{nt}(y)|< k \et 
y\not\in C^{(n)}_{nt}(x)\big]-
\Pd\big[|C^{(n)}_{nt}(x)|< k\big]\Pd\big[|C^{(n)}_{nt}(y)|< k\big]\Big).
\end{alignat*}
First,  $S^{(1)}_{n}(k)=n\Ed(|C^{(n)}_{nt}(1)|\un_{\{|C^{(n)}_{nt}(1)|<k\}})\leq nk. $\\
We consider now the following term in $S^{(2)}_{n}(k)$: 
\[\Pd\big[|C^{(n)}_{nt}(x)|< k,\ |C^{(n)}_{nt}(y)|< k \et y\not\in C^{(n)}_{nt}(x)\big]
 =\sum_{h=1}^{k-1}\Pd\big[|C^{(n)}_{nt}(x)|=h,\ |C^{(n)}_{nt}(y)|< k \et y\not\in C^{(n)}_{nt}(x)\big].
\]  
 For an integer 
$h< k$, 
\begin{multline*}
\Pd\big[|C^{(n)}_{nt}(x)|= h,\ |C^{(n)}_{nt}(y)|< k \et y\not\in C^{(n)}_{nt}(x)\big]\\
\leq \Pd\big[|C^{(n)}_{nt}(x)|= h\big]
\Pd\big[|C^{(n)}_{nt}(y)|< k \mid  y\not\in C^{(n)}_{nt}(x) \et |C^{(n)}_{nt}(x)|= h\big].
\end{multline*}
Let $\mathcal{G}^{(n,h)}_{nt}$ denote 
the random graph generated 
by the loops included in the subset of vertices $\KN[n-h]$ and 
let $C^{(n,h)}_{nt}(1)$ denote the component of  the vertex $1$ in $\mathcal{G}^{(n,h)}_{nt}$. 
By the properties of the Poisson loop ensemble, 
$$\Pd\big[|C^{(n)}_{nt}(y)|< k \mid  y\not\in C^{(n)}_{nt}(x) \et |C^{(n)}_{nt}(x)|= h\big]=
\Pd\big[|C^{(n,h)}_{nt}(1)|< k \big].$$ 
We can couple  $\mathcal{G}^{(n,h)}_{nt}$ and $\mathcal{G}^{(n)}_{nt}$ 
by adding to  $\mathcal{G}^{(n,h)}_{nt}$, $h$ vertices and the loops of an independent 
Poisson point process on $\RR^{+}\otimes \DL(\KN)$ at time $nt$ that are not included in $\KN[n-h]$. 
Therefore, 
$\Pd(|C^{(n,h)}_{nt}(1)|< k )-\Pd(|C^{(n)}_{nt}(1)|< k )$ is equal to the probability that 
the component of the vertex $1$  in $\mathcal{G}^{(n,h)}_{nt}$ is smaller than $k$ 
and that the component of $1$ in $\mathcal{G}^{(n)}_{nt}$ is greater or equal 
to $k$. This probability is bounded above by the 
probability that there exist  loops $\ell\in \DL_{nt}^{(n)}$ passing through the two subsets 
of vertices $\{1,\ldots,k\}$ and $\{n-h+1,\ldots,n\}$.
Therefore, 
\begin{multline*}
\Pd(|C^{(n,h)}_{nt}(1)|< k )-\Pd(|C^{(n)}_{nt}(1)|< k )\\\leq 
1-\exp\Big(-nt\big(\mu(\ell\in \DL(\KN),\ \ell \inter \KN[k]\et  \{k+1,\ldots,k+h\}\big)\Big)\\
=1-\Big(1+\frac{kh}{n\epsilon(k+h+n\epsilon)}\Big)^{-nt}
\leq 1-\Big(1+\frac{k^2}{n^2\epsilon^2}\Big)^{-nt}.
\end{multline*}
We deduce that 
$S^{(2)}_{n}(k)\leq n^2\Pd(|C^{(n)}_{nt}(1)|<k)
\Big(1-\big(1+\frac{k^2}{n^2\epsilon^2}\big)^{-nt}\Big)$
and  
$$\var(Z_{nt}(k))\leq nk+n^2\Big(1-\big(1+\frac{k^2}{n^2\epsilon^2}\big)^{-nt}\Big).$$ 
Let us note that for every $\delta>0$, $\dfrac{\var(Z_{nt}(a\log(n)))}{n^{1+\delta}}$
 converges to $0$ as $n$ tends to $+\infty$. Therefore,  
 Bienaym\'e-Chebyshev inequality is sufficient
 to complete the proof. 
\end{proof}
\begin{proof}[Proof of Proposition \ref{propintermedsize}]
Let $\beta\in]1/2,1[$. 
The idea of the proof is to lower bound the number of new active vertices at the first steps 
of the component exploration procedure by considering only loops inside a subset of 
$m_n=n-\lceil2n^\beta\rceil$ vertices. For large $n$, the Galton-Watson associated to this component
exploration procedure is still supercritical. \\
Let $\tau=T^{(n)}_t\wedge \min(k\in\NN^*,\ \sum_{i=1}^{k}\xi^{(n)}_{t,i}\geq 2n^{\beta})$.
 On the event $\{k\leq \tau\}$, the number of neutral sites at step $k$ is greater than $m_n$. 
 Let $U_k$ denote the set of the $m_n$ first neutral vertices at step $k$ and 
 let $Y^{(n)}_{t,k+1}$ denote the number of vertices $y\in U_k$ which are crossed by a loop 
 $\ell\in \DL_{t,x_k}(U_k\cup \{x_k\})$.
 On the event $\{k\leq \tau\}$, $Y^{(n)}_{t,k+1}\leq \xi^{(n)}_{t,k+1}$. Therefore, 
 $\sum_{i=1}^{k\wedge \tau}Y^{(n)}_{t,i}\leq \sum_{i=1}^{k\wedge \tau}\xi^{(n)}_{t,i}$.
\\
For a vertex $v$, set  
$$\Omega^{(n)}_{c_1,c_2}(v)=\{A_{c_1\log(n)}(v)\neq \emptyset\text{ and } 
  \exists k\in[c_1\log(n),n^{\beta}],\ |A_k(v)|\leq c_2k\}. $$
  On the event $\{k\leq \tau \et  |A_k(v)|\leq c_2k\}$, 
  $\sum_{i=1}^{k}Y^{(n)}_{t,i}$ is bounded above by $(c_2+1)k-1$. 
  Thus, 
\begin{eqnarray*}\Pd(\Omega^{(n)}_{c_1,c_2}(v))&\leq &\sum_{k=c_1\log(n)}^{n^{\beta}}
\Ed\left(\Pd( A_{c_1\log(n)}(v)\neq \emptyset\text{ and }  
|A_k(v)|\leq c_2k \mid \mathcal{F}_{k-1})\un_{\{k\leq \tau\}}\right)\\
&\leq& \sum_{k=c_1\log(n)}^{n^{\beta}}\Pd\left(\sum_{i=1}^{k}\tilde{Y}^{(n)}_{t,i}\leq (c_2+1)k-1\right).
\end{eqnarray*}
where $(\tilde{Y}^{(n)}_{t,i})_i$ denotes a sequence of independent random variables distributed 
as $|\mathcal{N}_{t,1}(\KN[m_n+1])|$. 
The last step consists in  establishing an exponential bound for
$$p_{n,k}:=\Pd\Big(\sum_{i=1}^{k}\tilde{Y}^{(n)}_{t,i}\leq (c_2+1)k-1\Big)$$ uniformly on $n$. A such exponential 
bound is an easy consequence of the following two facts:
\begin{itemize}
\item[(i)]  $c_2+1$ is smaller than the expectation of the 
$\CPois(\frac{t}{\epsilon(\epsilon+1)},\Geo)$ distribution. 
\item[(ii)] $(\tilde{Y}^{(n)}_{t,1})_n$ converges in law to the $\CPois(\frac{t}{\epsilon(\epsilon+1)},\Geo)$ 
distribution (Proposition \ref{convvoisins}). 
\end{itemize}
For every $\theta>0$, $p_{n,k}\leq \exp(k\Lambda_n(-\theta))$ where 
$\Lambda_n(\theta)=\log\Big(\Ed(e^{\theta(\tilde{Y}^{(n)}_{t,1}-(c_2+1))})\Big)$. 
Let $Y$ be $\CPois(\frac{t}{\epsilon(\epsilon+1)},\Geo)$-distributed random variable.  
Set  $\Lambda(\theta)=\log\big(\Ed(e^{\theta Y-(c_2+1))})\big)$  
for $\theta<\log(1+\epsilon)$. As $c_2+1<\Ed(Y)$,  $\Lambda'(0)>0$ and thus there exists $u^*<0$ such
that $\Lambda(u^*)<0$. Set $\delta=\frac12\Lambda(u^*)$. 
By assertion (ii),  $\Lambda_n(u^*)$ converges to $\Lambda(u^*)$, hence there exists $n^*$ such that 
for every $n\geq n^*$ and $k\in\NN^*$, $p_{n,k}\leq \exp(-k\delta)$. 
We deduce that  for  $n\geq n^*$,
$$\Pd\Big(\underset{v\in\KN}{\cup}\Omega^{(n)}_{c_1,c_2}(v)\Big)\leq n\Pd(\Omega^{(n)}_{c_1,c_2}(1))\leq n^{1-c_1\delta}(1-e^{-\delta})^{-1}$$
 which converges to 0 if  $c_1>\delta^{-1}$. \\
\end{proof}
\begin{proof}[Proof of Proposition \ref{uniquegiantcomp}]
Let  $\Omega^{(n)}_{c_1,c_2}$ denote the event 
$$\{\exists  x\in\KN\;\text{such that}\;
A_{c_1\log(n)}(x)\neq \emptyset \et \exists k\in[c_1\log(n), n^{\beta}]\;\text{such that}\;
|A_k(x)|\leq c_2k\}.$$
 It occurs  with probability $O(n^{1-c_1\delta(c_2)})$ by Proposition \ref{propintermedsize}. \\
Assume that $\Omega^{(n)}_{c_1,c_2}$ does not hold and  that there exist two vertices $x_1$ and $x_2$ 
the components of which are different and are
both of size greater than $c_1\log(n)$. 
The subsets of active vertices in step $n^{\beta}$, $A_{n^\beta}(x_1)$ and $A_{n^\beta}(x_2)$, are disjoint 
and both of size greater than 
$c_2n^\beta$. It means that no loop $\ell\in\DL^{(n)}_{nt}$  passes through $A_{n^\beta}(x_1)$ and 
$A_{n^\beta}(x_2)$. 
Note that if $F_1$ and $F_2$ are two disjoint subsets of vertices then 
\begin{multline*}\Pd(\nexists\ell \in\DL^{(n)}_{nt},\; \ell \inter F_1 \et F_2)=
\exp\Big(-nt\mu(\ell \in\DL(\KN),\; \ell \inter F_1 \et F_2)\Big)
\\=\Big(1-\frac{\frac{|F_1||F_2|}{n^2\epsilon^2}}{(1+\frac{|F_1|}{n\epsilon})(1+\frac{|F_2|}{n\epsilon})}\Big)^{nt} 
 \leq \exp\Big(-\frac{t|F_1||F_2|}{n(\epsilon+1)^2}\Big).
 \end{multline*}
Therefore there exists two different components of size greater than $c_1\log(n)$ 
with a probability smaller than the sum of $\Pd(\Omega^{(n)}_{c_1,c_2})$ and 
\begin{multline*}
\Ed\Big(\sum_{\substack{x_1,x_2\in \KN,\ A_{n^{\beta}}(x_1)\cap A_{n^{\beta}}(x_2)= \emptyset\\ 
|A_{n^{\beta}}(x_1)| > c_1\log(n),\ |A_{n^{\beta}}(x_2)| > c_1\log(n)}}\Pd(\nexists\ell \in\DL^{(n)}_{nt},\ \ell \inter A_{n^{\beta}}(x_2)
\et A_{n^{\beta}}(x_2) \mid \mathcal{F}_{n^\beta})\Big)\\
\leq n^2\exp\Big(-\frac{tc_{2}^{2}n^{2\beta-1}}{(\epsilon+1)^2}\Big). 
\end{multline*}
\end{proof}

\section{Hydrodynamic behavior of the coalescent process\label{sect:coageq}}
This section is devoted to the proof of Proposition  \ref{prop:hydrodyn}. 
\begin{enumerate}
\item Let $t>0$.  First, we prove that  $\rho_{\epsilon,t}^{(n)}(k) =\frac{1}{nk}| \{x\in \KN,\; |C^{(n)}_{nt}(x)| = k\}|$ converges in $L^2$ to  
$\rho_{\epsilon,t}(k) =\frac{1}{k}\Pd(T^{(1)}_{\epsilon, t}= k)$.
Theorem \ref{thmsub} and Corollary \ref{coroltwocomponents} imply the
convergence of the first two moments of $\rho_{\epsilon,t}^{(n)}(k)$ to 
$\rho_{\epsilon,t}(k)$ and $(\rho_{\epsilon,t}(k))^2$ respectively and thus the
$L^2$ convergence of $(\rho_{\epsilon,t}^{(n)}(k))_n$.  Indeed, 
$\Ed(\rho_{\epsilon,t}^{(n)}(k))=\frac{1}{k}\Pd(|C^{(n)}_{nt,\epsilon}(1)|=k)$
converges to $\rho_{\epsilon,t}(k)$. The second moment is
$$\Ed((\rho_{\epsilon,t}^{(n)}(k))^2)=\frac{1}{nk^2}\Pd(|C^{(n)}_{nt,\epsilon}
(1)|=k)+(1-\frac{1}{n})\frac{1}{k^2}\Pd(|C^{(n)}_{nt,\epsilon}(1)|=k\; \et\;
|C^{(n)}_{nt,\epsilon}(2)|=k).$$ 
The first term converges to 0 and the second term converges to
$(\rho_{\epsilon,t}(k))^2$. 
\item 
It remains to show  that $\{\rho_{\epsilon,t},\; t\in\RR_+\}$ is solution of the coagulation equations:  
\[
\frac{d}{dt}\rho_t(k) =
\sum_{j=2}^{+\infty}\frac{1}{(\epsilon+1)^j}G_{j}(\rho_t,k)
\]
where 
\begin{multline*} G_{j}(\rho_t,k)=
\frac{1}{j}\Big(\sum_{\substack{(i_1,\ldots,i_{j})\in(\NN^*)^{j}\\  i_1+\cdots
+i_j=k}}\prod_{u=1}^{j}i_u\rho_{t}(i_u)\Big)\un_{\{j\leq
k\}}-k\rho_t(k)\Big(\sum_{i=1}^{+\infty}
i\rho_t(i)\Big)^{j-1}\\
-k\rho_{t}(k)\sum_{h=1}^{j-1}\binom{j-1}{h}
\Big(\sum_{i=1}^{+\infty}i(\rho_{0}(i)-\rho_t(i))\Big)^h 
\Big(\sum_{u=1}^{+\infty}u\rho_t(u)\Big)^{j-1-h}.
\end{multline*}
By definition of $\rho_{\epsilon,t}$, 
$G_{j}(\rho_{\epsilon,t},k)=\frac{1}{j}\Pd(T^{(j)}_{\epsilon,t}=k)-k\rho_{\epsilon,t}(k)$ where $T^{(j)}_{\epsilon,t}$ is the total progeny of a Galton-Watson process with family size distribution 
$\CPois(\frac{t}{\epsilon(\epsilon+1)},\mathcal{G}_{\NN^*}(\frac{\epsilon}{
\epsilon+1}))$  and $j$ ancestors. \\
The probability distribution of $T^{(j)}_{\epsilon,t}$ is computed in the appendix (Lemma \ref{lemTdistr}):
\[
\left\{\begin{array}{l}P(T^{(j)}_{\epsilon,t} = j) = \displaystyle{e^{-\frac{j
t}{\epsilon(\epsilon+1)}}}\\
P(T^{(j)}_{\epsilon,t} = k) = \displaystyle{\frac{j}{k}\frac{e^{-\frac{k
t}{\epsilon(\epsilon+1)}}}{(\epsilon+1)^{k-j}}\sum_{h=1}^{k-j}\binom{k-j-1}{h-1}
\frac{1}{h!}\Big(\frac{kt}{\epsilon+1}\Big)^h}\quad \forall k\geq j+1.  
\end{array}\right.
\]
We deduce that
$$\sum_{j=2}^{+\infty}\frac{1}{(\epsilon+1)^j}G_j(\rho_{\epsilon,t},k)=\frac{e^{-\frac{tk}{
\epsilon(\epsilon+1)}}}{k(\epsilon+1)^k}\left(1+\sum_{h=1}^{k-2}\frac{1}{h!}\Big(\frac{tk}{
\epsilon+1}\Big)^{h}\sum_{j=2}^{k-h}\binom{k-j-1}{h-1}\right)-\frac{k}{
\epsilon(\epsilon+1)}\rho_{\epsilon,t}(k).$$
By using that $\displaystyle{\binom{m}{k}=\sum_{j=k-1}^{m-1}\binom{j}{k-1}}$ for every $1\leq
k\leq m-1$, we obtain that
\linebreak[4] $\displaystyle{\sum_{j=2}^{+\infty}\frac{1}{(\epsilon+1)^j}G_j(\rho_{\epsilon,t},k)}$ is equal to 
$\frac{d}{dt}\rho_{\epsilon,t}(k).$\qed
\end{enumerate}
\appendix

\section{Some properties of the Galton-Watson process with offspring
distribution $\CPois(\lambda,\mathcal{G}_{\NN^*}(p))$\label{propGW}}
Let $\lambda$ be a positive number and let $p\in]0,1[$. In this section we
describe some properties of the Galton-Watson process with offspring
distribution $\CPois(\lambda,\mathcal{G}_{\NN^*}(p))$ which are useful in the
study of the component sizes of a random graph.  In our model, the parameters
are $\lambda=\frac{t}{\epsilon(\epsilon+1)}$ and
$p=\frac{\epsilon}{\epsilon+1}$.  
\paragraph{Average number of offspring.} 
 The expectation of the compound Poisson distribution\linebreak[4]
$\CPois(\lambda,\mathcal{G}_{\NN^*}(p))$ is $\frac{\lambda}{p}$, hence the
Galton-Watson process with offspring distribution
$\CPois(\lambda,\mathcal{G}_{\NN^*}(p))$ is subcritical if
$\frac{\lambda}{p}<1$. 
\paragraph{Extinction probability.}
Let $\rho$ denote the extinction probability of this Galton-Watson process with
one ancestor. As the   probability-generating function of
$\CPois(\lambda,\mathcal{G}_{\NN^*}(p))$ is
$$\phi(s)=\exp(-\lambda\frac{1-s}{1-s+sp})\quad \forall s<\frac{1}{1-p},$$ $\rho$ is
the smallest positive solution to the equation 
$\exp(-\lambda\frac{1-s}{1-s+sp})=s$. 
\paragraph{Total progeny distribution.}
Let us first recall the general result on the total population size of a
Galton-Watson  proved by Dwass in \cite{Dwass}. 
\begin{bibli}
\label{thDwass}
 Consider a branching process with offspring distribution $\nu$ and $u\geq 1$
ancestors. Let 
$T$ denote its total progeny and let $(X_n)_n$ be a sequence of independent
random variables with distribution $\nu$. 
 $$\forall k\geq u,\ \Pd(T=k)=\frac{u}{k}P(X_1+\ldots+X_k=k-u). $$
\end{bibli}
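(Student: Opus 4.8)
The plan is to realise the total progeny as the first hitting time of $0$ by a downward-skip-free random walk and then invoke the cycle lemma. Explore the $u$ genealogical trees in breadth-first order, labelling individuals $1,2,\ldots$ as they are discovered, and let $C_i$ be the number of children of the $i$-th one; by the branching property $(C_i)_{i\ge 1}$ are i.i.d.\ with law $\nu$ (once the exploration stops we keep drawing fresh independent copies, which affects no event depending on finitely many of the $C_i$). Put $S_0=u$ and $S_m=S_{m-1}+C_m-1=u+\sum_{i=1}^m(C_i-1)$, so $S_m$ counts the individuals discovered but not yet explored after $m$ steps and $T=\min\{m\ge 1:\ S_m=0\}$. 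Since each increment $C_i-1$ is an integer $\ge -1$, the walk $S$ decreases by at most one per step; hence $\{T=k\}=\{S_m\ge 1\text{ for }1\le m\le k-1\}\cap\{S_k=0\}$, and $\{S_k=0\}$ is precisely $\{\sum_{i=1}^k C_i=k-u\}$.

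The combinatorial core is the cycle lemma of Dvoretzky and Motzkin: for a fixed integer vector $(c_1,\ldots,c_k)\in\NN^k$ with $\sum_{i=1}^k c_i=k-u$ and $1\le u\le k$, among the $k$ cyclic shifts of $(c_1,\ldots,c_k)$ exactly $u$ have the property that the associated walk $u+\sum_{i=1}^m(c_{\sigma(i)}-1)$ stays $\ge 1$ for all $1\le m\le k-1$ (it then necessarily hits $0$ at step $k$). I would include the standard short proof of this by the cyclic-shift/lattice-path argument. Granting it, note that $(C_1,\ldots,C_k)$ is i.i.d., hence exchangeable, so its conditional law given $\{\sum_{i=1}^k C_i=k-u\}$ is invariant under cyclic permutations; since the $k$ rotation-events then have equal conditional probability and exactly $u$ of them occur for every realisation in that event, that common probability is $u/k$, i.e.\ $\Pd\big(S_m\ge 1,\ 1\le m\le k-1 \,\big|\, \textstyle\sum_{i=1}^k C_i=k-u\big)=u/k$.

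Combining the two steps,
\[
\Pd(T=k)=\Pd\Big(S_m\ge 1\ \text{for}\ 1\le m\le k-1\ \text{and}\ S_k=0\Big)
=\frac{u}{k}\,\Pd\Big(\sum_{i=1}^k C_i=k-u\Big)=\frac{u}{k}\,\Pd\big(X_1+\cdots+X_k=k-u\big),
\]
which is the claim. The one delicate point is the downward-skip-free property that replaces ``first hitting time of $0$'' by the single condition $\sum_{i=1}^k C_i=k-u$; everything else is bookkeeping.

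An alternative I would keep in reserve avoids the cycle lemma altogether. Let $F$ be the generating function of the total progeny with one ancestor; it satisfies $F(s)=s\,\phi(F(s))$ with $\phi$ the offspring generating function, so Lagrange--B\"urmann inversion gives $[s^k]F(s)^u=\frac{u}{k}[z^{k-u}]\phi(z)^k$. Since the total progeny with $u$ ancestors is a sum of $u$ independent copies of the one-ancestor progeny, its generating function is $F(s)^u$, and $[z^{k-u}]\phi(z)^k=\Pd(X_1+\cdots+X_k=k-u)$, which again yields the formula.
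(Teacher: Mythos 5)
The paper does not prove this statement: it is quoted verbatim as a result of Dwass (the \texttt{bibli} environment marks a theorem recalled from the literature, with the sentence ``Let us first recall the general result on the total population size of a Galton-Watson proved by Dwass in [Dwass]''), so there is no internal proof to compare against. Your argument is correct and complete modulo the standard proof of the Dvoretzky--Motzkin cycle lemma, which you rightly indicate you would include. The key points are all in order: the breadth-first exploration makes the offspring counts i.i.d.; the left-continuity (downward steps of size exactly one) of the walk $S_m=u+\sum_{i=1}^m(C_i-1)$ is what lets you replace the first-passage event $\{T=k\}$ by ``stay $\geq 1$ up to $k-1$ and $S_k=0$''; exchangeability of $(C_1,\ldots,C_k)$ conditionally on the cyclic-shift--invariant event $\{\sum_i C_i=k-u\}$ gives equal conditional probability to the $k$ rotation events; and the cycle lemma says exactly $u$ of those events occur pointwise, yielding $u/k$. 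The Lagrange--B\"urmann alternative you sketch is likewise a standard and correct derivation (and is in fact closer to Dwass's original generating-function argument), provided one notes that in the supercritical case $F$ is a defective p.g.f.\ but its coefficients still equal $\Pd(T=k)$ for finite $k$.
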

Recall that in the supercritical case (i.e. $\sum_{k}\nu(k)>1$),  $P(T<+\infty)=\rho^u<1$ if $\rho$ denote the extinction probability of the branching process starting from one ancestor. \\ 
Using this theorem, we obtain:
\begin{lem}\label{lemTdistr} 
Let $T^{(u)}$ denote the total progeny  of a Galton-Watson process with $u$
ancestors and with offspring distribution 
$\CPois(\lambda,\mathcal{G}_{\NN^*}(p))$. 
Then,  
\begin{equation}
\label{TGWdistrib}
\left\{\begin{array}{l}P(T^{(u)}= u) = \displaystyle{e^{-u \lambda}}\\
P(T^{(u)} = k) = \displaystyle{\frac{u}{k}e^{-k
\lambda}(1-p)^{k-u}\sum_{j=1}^{k-u}\binom{k-u-1}{j-1}\frac{1}{j!}\Big(\frac{
k\lambda p}{1-p}\Big)^j}\quad \forall k\geq u+1.  \\
\end{array}\right.
\end{equation}
\end{lem}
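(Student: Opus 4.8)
The plan is to apply Dwass's theorem (Theorem~\ref{thDwass}) with offspring distribution $\nu = \CPois(\lambda,\mathcal{G}_{\NN^*}(p))$, which reduces everything to computing $\Pd(X_1+\cdots+X_k = k-u)$, where the $X_i$ are i.i.d.\ with this compound Poisson law. The key observation is that a sum of $k$ independent $\CPois(\lambda,\mathcal{G}_{\NN^*}(p))$ variables is again compound Poisson, namely $\CPois(k\lambda,\mathcal{G}_{\NN^*}(p))$, since the Poisson parameters add. So I first need $\Pd(S_k = m)$ for $S_k \sim \CPois(k\lambda,\mathcal{G}_{\NN^*}(p))$ and then specialize to $m = k-u$.

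The first step handles $m=0$: a compound Poisson variable is zero exactly when its governing Poisson count is zero (geometric summands on $\NN^*$ are strictly positive), so $\Pd(S_k=0)=e^{-k\lambda}$, giving $\Pd(T^{(u)}=u)=\frac{u}{u}e^{-u\lambda}=e^{-u\lambda}$. For the second step, with $m\geq 1$, I condition on the number $N$ of geometric summands: $\Pd(S_k=m)=\sum_{j\geq 1}e^{-k\lambda}\frac{(k\lambda)^j}{j!}\Pd(G_1+\cdots+G_j=m)$, where the $G_i$ are i.i.d.\ $\mathcal{G}_{\NN^*}(p)$. A sum of $j$ independent geometric-on-$\NN^*$ variables of parameter $p$ is negative-binomial: $\Pd(G_1+\cdots+G_j=m)=\binom{m-1}{j-1}p^j(1-p)^{m-j}$ for $m\geq j$ (and $0$ otherwise, which automatically truncates the sum at $j\leq m$). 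Substituting $m=k-u$ yields
\[
\Pd(S_k = k-u) = e^{-k\lambda}(1-p)^{k-u}\sum_{j=1}^{k-u}\frac{(k\lambda)^j}{j!}\binom{k-u-1}{j-1}p^{j}(1-p)^{-j}
= e^{-k\lambda}(1-p)^{k-u}\sum_{j=1}^{k-u}\binom{k-u-1}{j-1}\frac{1}{j!}\Big(\frac{k\lambda p}{1-p}\Big)^{j}.
\]
Multiplying by $\frac{u}{k}$ as dictated by Dwass's theorem gives exactly \eqref{TGWdistrib}, and the range $k\geq u+1$ corresponds precisely to $m=k-u\geq 1$.

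There is essentially no hard part here; the only things to be careful about are (i) recording the two standard convolution facts (Poisson parameters add under independent sums; independent $\mathcal{G}_{\NN^*}(p)$ summands give a negative binomial, equivalently a shifted one), and (ii) checking the edge case $k=u$ separately since the negative-binomial formula with $m=0$ degenerates. I would also note in passing, for the supercritical case $\lambda/p>1$, that Dwass's theorem only describes $\Pd(T^{(u)}=k)$ for finite $k$ and that $\sum_k \Pd(T^{(u)}=k)=\rho^u<1$, consistent with the remark after Theorem~\ref{thDwass}, though this is not needed for the statement of the lemma itself.
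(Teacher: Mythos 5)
Your proof is correct and follows essentially the same route as the paper's: apply Dwass's theorem, observe that the sum $X_1+\cdots+X_k$ is $\CPois(k\lambda,\mathcal{G}_{\NN^*}(p))$-distributed, condition on the Poisson count, and invoke the negative-binomial formula for a sum of $\mathcal{G}_{\NN^*}(p)$ variables.
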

\begin{proof}
In our setting, the  sum $X_1+\ldots+X_k$ appearing in the Dwass's theorem 
  has the same  distribution as $\sum_{i=0}^{Z_k}Y_i$  where $Z_k$ is a
$\Pois(k\lambda)$-distribution random variable and $(Y_i)_i$ is a
sequence of independent random variables with 
$\mathcal{G}_{\NN^*}(p)$-distribution. \\
Therefore,
$\Pd(T^{(u)}=u)=\Pd(Z=0)=\displaystyle{e^{-u \lambda}}$ and for every $k\geq
u+1$, $$\Pd(T^{(u)}=k)=\frac{u}{k}\sum_{j=1}^{k-u}\Pd(Z=j)\Pd(Y_1+\ldots +
Y_j=k-u)$$ with 
$\Pd(Y_1+\ldots + Y_j=k-u)=\binom{k-u-1}{j-1}(1-p)^{k-u-j}p^j$. 
\end{proof}
\paragraph{Dual Galton-Watson process. }
A supercritical Galton-Watson process conditioned to become extinct 
is a subcritical Galton-Watson process: 
\begin{bibli}[\cite{AthreyaNey72}, Theorem 3, p. 52]
 Let $(Z_n)_n$ be a supercritical Galton-Watson process with one ancestor. 
 Let $\phi$ denote the generating function of its offspring distribution and let 
 $q$ denote its extinction probability. 
 Assume that $\phi(0)>0$. Then,  $(Z_n)_n$ conditioned to become extinct has the same law
as a subcritical Galton-Watson process with one ancestor and offspring generating function
$s\mapsto\frac{1}{q}\phi(qs)$. 
\end{bibli}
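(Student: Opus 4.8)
This is the classical Galton--Watson duality, so the plan is to give the standard generating-function argument. The first step is to verify that $\hat\phi(s):=\frac1q\phi(qs)$ really is a probability generating function. Writing $\phi(s)=\sum_{k\ge 0}p_ks^k$, one has $\hat\phi(s)=\sum_{k\ge 0}q^{k-1}p_k\,s^k$, so the coefficients $\hat p_k:=q^{k-1}p_k$ are nonnegative and sum to $\hat\phi(1)=\phi(q)/q=1$, using that $q$ is a fixed point of $\phi$. The hypothesis $\phi(0)>0$ guarantees $q>0$ (otherwise $q=\phi(q)=\phi(0)=0$), so dividing by $q$ is legitimate; and $\hat\phi'(1)=\phi'(q)<1$ by strict convexity of $\phi$ together with $\phi'(1)>1$, so the dual offspring law is subcritical, as claimed.

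The computational heart is one identity for convolution powers. From $\hat\phi(s)^j=q^{-j}\phi(qs)^j$ and comparison of the coefficients of $s^\ell$ on both sides I would get
$$\hat p^{*j}_\ell=q^{\ell-j}\,p^{*j}_\ell ,$$
where $p^{*j}$ and $\hat p^{*j}$ denote the $j$-fold convolutions of $\{p_k\}$ and $\{\hat p_k\}$ respectively; these are precisely the one-step transition probabilities of the two Galton--Watson chains started from $j$ individuals.

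Next I would compute the finite-dimensional distributions of $(Z_n)_n$ conditioned on the extinction event $E$. By the branching property, conditionally on $(Z_0,\dots,Z_n)=(1,k_1,\dots,k_n)$ the event $E$ is exactly the simultaneous extinction of the $k_n$ independent subtrees rooted at the generation-$n$ individuals, so $\Pd(E\mid Z_0,\dots,Z_n)=q^{Z_n}$; since $\Pd(E)=q$, this gives
$$\Pd(Z_1=k_1,\dots,Z_n=k_n\mid E)=q^{k_n-1}\prod_{m=1}^{n}p^{*k_{m-1}}_{k_m},\qquad k_0:=1 .$$
On the other hand the subcritical Galton--Watson process $\hat Z$ with offspring law $\hat\phi$ satisfies $\Pd(\hat Z_1=k_1,\dots,\hat Z_n=k_n)=\prod_{m=1}^{n}\hat p^{*k_{m-1}}_{k_m}$, and substituting the convolution identity turns this into $q^{\sum_{m=1}^{n}(k_m-k_{m-1})}\prod_{m=1}^{n}p^{*k_{m-1}}_{k_m}$, whose exponent telescopes to $k_n-k_0=k_n-1$. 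The two expressions agree for every trajectory --- the degenerate cases where some $k_j=0$ being covered as well, with the convention $p^{*0}_0=1$ --- so the conditioned process and $\hat Z$ have the same law.

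I do not expect a genuine obstacle; the only points needing care are the justification of $\Pd(E\mid\mathcal{F}_n)=q^{Z_n}$ through the independence of the subtrees of the distinct generation-$n$ individuals (i.e. the Markov/branching property), and checking that the conditioning is well posed --- which is exactly the role of the hypothesis $\phi(0)>0$, equivalently $q>0$.
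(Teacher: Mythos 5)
The paper does not prove this statement: it is quoted as Theorem~3, p.~52 of Athreya--Ney \cite{AthreyaNey72} and used as a black box. Your argument is a correct and complete rendition of the classical proof (verify $\hat\phi$ is a subcritical pgf via $\phi(q)=q$ and $\phi'(q)<1$, compute $\Pd(E\mid\mathcal{F}_n)=q^{Z_n}$ by the branching property, and match finite-dimensional distributions through the telescoping exponent), which is exactly the route taken in the cited reference.
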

If the offspring distribution is $\CPois(\lambda,\mathcal{G}_{\NN^*}(p))$, we obtain:
\begin{lem}
 \label{dualproces}Let $Z$ be a Galton-Watson process with offspring
distribution 
$\CPois(\lambda,\mathcal{G}_{\NN^*}(p))$. 
Assume that $\frac{\lambda}{p}> 1$ and let $q$ denote the extinction
probability  of $Z$.  Then $Z$ conditioned to become extinct has the same law
as the subcritical Galton-Watson process with family size distribution 
$\CPois(\tilde{\lambda},\mathcal{G}_{\NN^*}(\tilde{p}))$ where
$1-\tilde{p}=q(1-p)$ and  $\tilde{\lambda}=\lambda q \dfrac{p}{\tilde{p}}$.
\end{lem}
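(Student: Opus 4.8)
The plan is to invoke the theorem of Athreya and Ney quoted just above. Since the offspring law $\CPois(\lambda,\mathcal{G}_{\NN^*}(p))$ assigns mass $e^{-\lambda}>0$ to the value $0$, the hypothesis $\phi(0)>0$ holds, and the supercriticality hypothesis $\frac{\lambda}{p}>1$ is exactly the assumption made. Hence the conditioned process is a Galton--Watson process whose offspring generating function is $\psi(s)=\frac1q\phi(qs)$, where $\phi(s)=\exp\!\big(-\lambda\frac{1-s}{1-s+sp}\big)$ and $q$ is the extinction probability. It therefore suffices to check that $\psi$ is the generating function of a compound Poisson law of the announced type, i.e. that $\psi(s)=\exp\!\big(-\tilde\lambda\frac{1-s}{1-s+s\tilde p}\big)$ with $1-\tilde p=q(1-p)$ and $\tilde\lambda=\lambda q\frac{p}{\tilde p}$.

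To do this I would take logarithms and use the defining relation $\phi(q)=q$, which reads $-\lambda\frac{1-q}{1-q+qp}=\log q$, to write
\[
\log\psi(s)=-\log q-\lambda\frac{1-qs}{1-qs+qsp}=\lambda\Big(\frac{1-q}{1-q+qp}-\frac{1-qs}{1-qs+qsp}\Big).
\]
Setting $r=1-p$, so that $1-q+qp=1-qr$ and $1-qs+qsp=1-qrs$, the numerator of the difference simplifies: $(1-q)(1-qrs)-(1-qs)(1-qr)=q\,p\,(s-1)$. Consequently
\[
\log\psi(s)=-\frac{\lambda qp\,(1-s)}{(1-qr)(1-qrs)}.
\]
Now $1-\tilde p=qr$ gives $\tilde p=1-qr$ and $1-s+s\tilde p=1-qrs$, so the right-hand side equals $-\tilde\lambda\frac{1-s}{1-s+s\tilde p}$ precisely when $\tilde\lambda=\frac{\lambda qp}{1-qr}=\lambda q\frac{p}{\tilde p}$, which is the claimed value. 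This identifies $\psi$ as the generating function of $\CPois(\tilde\lambda,\mathcal{G}_{\NN^*}(\tilde p))$; subcriticality of the dual is guaranteed by the Athreya--Ney theorem and can also be checked directly, since $\frac{\tilde\lambda}{\tilde p}=\frac{\lambda qp}{\tilde p^{\,2}}=\phi'(q)<1$ at the extinction fixed point.

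The only real work is the algebraic reduction in the middle step, and the single point requiring care is to invoke the fixed-point equation $-\lambda\frac{1-q}{1-q+qp}=\log q$ at the right moment so as to eliminate the term $\log q$; after that the cancellation $(1-q)(1-qrs)-(1-qs)(1-qr)=qp(s-1)$ is the crux, and the remainder is bookkeeping.
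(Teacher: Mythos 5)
Your proposal is correct and follows exactly the route the paper intends: the paper states the Athreya--Ney theorem immediately before Lemma \ref{dualproces} and leaves the computation implicit, and you have simply carried out the algebra — applying $\psi(s)=\frac{1}{q}\phi(qs)$, eliminating $\log q$ via the fixed-point relation $\phi(q)=q$, and simplifying with $r=1-p$ to identify $\psi$ as the generating function of $\CPois(\tilde\lambda,\mathcal{G}_{\NN^*}(\tilde p))$. The key cancellation $(1-q)(1-qrs)-(1-qs)(1-qr)=qp(s-1)$ and the identity $\tilde\lambda/\tilde p=\phi'(q)$ both check out.
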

\noindent In particular, if $Z$ is the Galton-Watson process with family size
distribution $\CPois(\frac{t}{\epsilon(\epsilon+1)},\mathcal{G}_{\NN^*}(\frac{\epsilon}{
\epsilon+1}))$ and one ancestor  for $t > \epsilon^2$  then $Z$ conditioned to become extinct 
is a subcritical Galton-Watson
process with family size distribution 
 $\CPois(\frac{t}{q_{\epsilon,t}\tilde{\epsilon}(\tilde{\epsilon}+1)},\mathcal{G}_{\NN^*}
(\frac{\tilde{\epsilon}}{\tilde{\epsilon}+1}))$ where  
$\tilde{\epsilon}+1=\frac{\epsilon+1}{q}$. 
\\
Let us note that in the Erd\"os-R\'enyi random graph $\ER(n,\frac{t}{n})$ with $t>1$,  
the `dual' of the Galton-Watson process with Poisson$(t)$ offspring distribution 
corresponds to the limit of the Galton-Watson process 
associated to the component of a vertex outside the `maximal component' of $\ER(n,\frac{t}{n})$
(see, for example, \cite{Spencer91}). 
It is also the case for the random graph $\mathcal{G}^{(n)}_{nt}$  
defined by the loop set
$\DL^{(n)}_{nt}$ for $t >\epsilon^2$. Indeed, if $A\subset \KN$ is a subset of size 
$m_n\sim n q_{\epsilon,t}$, then the loop set inside $A$ at time 
$nt$ (denoted by $\DL^{(n)}_{nt}(A)$) has the same law as a Poisson loop set defined on 
$\bar{K}_{m_n}$ endowed with unit conductances and a uniform killing measure with intensity 
$\tilde{\kappa}_n=m_n\epsilon_n$ 
at time $m_nt_n$ where  
$\epsilon_n=\epsilon+(\frac{n}{m_n}-1)(\epsilon+1)\underset{n\rightarrow +\infty}{\rightarrow}\tilde{\epsilon}$ and 
$t_n=\frac{nt}{m_n}\underset{n\rightarrow +\infty}{\rightarrow}\frac{t}{q_{\epsilon,t}}$.  
%
\section{The random graph process defined by loops of fixed length\label{section:fixedlength}}
Let $j$ be an integer greater than or equal to 2. In this section, we consider the random graph  $\mathcal{G}_{t}^{(n,j)}$ defined by the set of loops of length $j$ at time $t$. The study of $\mathcal{G}_{t}^{(n)}$ detailed in the paper can be conducted in the same manner on $\mathcal{G}_{t}^{(n,j)}$. We present in this section the main results and justify Proposition \ref{prop:hydrodynj}. \\
For a vertex $x$, let $C_{t}^{(n,j)}(x)$ denote the connected component of a vertex $x$ in $\mathcal{G}_{t}^{(n,j)}$ ans set $\NL_{t,x}^{(j)}(\KN)=\{\ell \in\NL_{t,x}(\KN),\ |\ell|=j\}$. A Galton-Watson process can be constructed by using the component exploration procedure described in Section \ref{sect:exploration} to  explore $C_{t}^{(n,j)}(x)$: the offspring distribution of this Galton-Watson process is the distribution of $(j-1)|\NL_{t,x}^{(j)}(\KN)|$.  Set $\beta_{n,\epsilon}^{(j)}=\mu(\NL_{t,x}^{(j)}(\KN))$: $$\beta_{n,\epsilon}^{(j)}=\frac{1}{j(\epsilon+1)^j}(1-(1-\frac{1}{n})^j). $$
The random variable  $|\NL_{t,x}^{(j)}(\KN)|$ is a $\Pois(t\beta_{n,\epsilon}^{(j)})$-distributed random variable. Therefore, $|\NL_{tn(\epsilon+1)^j,x}^{(j)}(\KN)|$ converges in distribution to $\Pois(t)$ as $n$ tends to $+\infty$. 
Let $\nu_{t,j}$ be the distribution of $(j-1)Y$ where $Y$ denotes a \Pois$(t)$-distributed random variable. Let $T^{(u,j)}_t$ denote the total progeny of a Galton-Watson process with $u$ ancestors and  offspring distribution $\nu_{t,j}$. 
The size of the connected component of $x$ in $\mathcal{G}_{tn(\epsilon+1)^j}^{(n,j)}$ can be compared to $T^{(1,j)}_t$:
\begin{thm}
\label{sizecompj}
Let $\epsilon$ and  $t$ be two positive reals. \\
If  $(k_n)_n$ is a  sequence of positive numbers such that $\dfrac{k^{2}_n}{n}$
converges to $0$, then   
 $$\Pd(|C^{(n,j)}_{nt(\epsilon+1)^j}(x)|\leq k_n)-\Pd(T^{(1,j)}_{t}\leq k_n)$$ 
converges to 0. 
\end{thm}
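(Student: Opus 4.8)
The plan is to mirror the proof of Theorem \ref{thmsub}, replacing the loop soup $\DL^{(n)}_{nt}$ by its length-$j$ sub-ensemble and the compound Poisson offspring law by $\nu_{t,j}$. As in Section \ref{sect:exploration}, I would run the component exploration procedure on $\mathcal{G}^{(n,j)}_{t}$: at step $k$ the number $\xi^{(n,j)}_{t,k}$ of newly activated vertices is bounded above by $(j-1)|\NL^{(j)}_{t,x_k}(\KN\setminus H_{k-1})|$, and to obtain i.i.d.\ steps one adds back the loops through $x_k$ that also touch $H_{k-1}$, yielding a dominating random variable $\bar\zeta^{(n,j)}_{t,k}$ whose law is that of $(j-1)|\NL^{(j)}_{t,x}(\KN)|$, i.e.\ $(j-1)$ times a $\Pois(t\beta^{(j)}_{n,\epsilon})$ variable. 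The associated Galton--Watson total progeny $\bar T^{(n,j)}_{t}$ then dominates $|C^{(n,j)}_{t}(x)|$.

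First I would establish the analogue of Proposition \ref{propcomp1}, bounding $\Pd(|C^{(n,j)}_{nt(\epsilon+1)^j}(x)|\le k)-\Pd(\bar T^{(n,j)}_{nt(\epsilon+1)^j}\le k)$ by a quantity of order $k^2/n$ (times a constant depending on $t,\epsilon,j$). The three sources of discrepancy are exactly as before: a length-$j$ loop through $x_k$ may revisit a vertex or meet another such loop away from $x_k$ (a Lemma \ref{lemrecoupe}-type estimate, here even simpler since all loops have the fixed length $j$, so the relevant $\mu$-mass is $O(1/n^2)$); a loop through $x_k$ may also pass through $H_{k-1}$ (probability $O((k-1)/n)$ by a Lemma \ref{lemintersect}-type bound); and a loop through $x_k$ not touching $H_{k-1}$ may hit the active set $A_{k-1}$ (probability $O(|A_{k-1}|/n)$). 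Summing over $j\le k$ and using $\Ed(|A_{j-1}|-1)\le (j-1)\Ed(\zeta^{(n,j)})$ with $\Ed(\zeta^{(n,j)})=(j-1)t(\epsilon+1)^j\beta^{(j)}_{n,\epsilon}\to (j-1)t$ gives the $k^2/n$ bound.

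Second, I would compare $\bar T^{(n,j)}_{nt(\epsilon+1)^j}$ with $T^{(1,j)}_t$ using Lemma \ref{lem_couplingT}: the offspring law of the former is $(j-1)\Pois(nt(\epsilon+1)^j\beta^{(j)}_{n,\epsilon})$ and of the latter is $(j-1)\Pois(t)$, and since $nt(\epsilon+1)^j\beta^{(j)}_{n,\epsilon}=nt\bigl(1-(1-\tfrac1n)^j\bigr)/j\to t$ with $nt(\epsilon+1)^j\beta^{(j)}_{n,\epsilon}-t=O(1/n)$ (more precisely $t-nt(\epsilon+1)^j\beta^{(j)}_{n,\epsilon}\in[0,\tfrac{(j-1)t}{2n}]$ for large $n$), the total variation distance between the two Poisson parameters is $O(1/n)$, hence between the two offspring laws as well; Lemma \ref{lem_couplingT} then yields $|\Pd(\bar T^{(n,j)}_{nt(\epsilon+1)^j}\ge k)-\Pd(T^{(1,j)}_t\ge k)|=O(k/n)$. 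Combining the two estimates with $k=k_n$ gives a bound of order $k_n^2/n$, which tends to $0$ by hypothesis; this proves Theorem \ref{sizecompj}. The main obstacle is purely bookkeeping: getting the constants in the length-$j$ versions of Lemmas \ref{lemrecoupe} and \ref{lemintersect} right, since for a fixed length $j$ the $\mu$-mass of a loop is $\tfrac{1}{j(n(\epsilon+1))^j}$ and one must track the combinatorial factors counting how a length-$j$ loop can meet a prescribed small vertex set; none of this is conceptually hard, but it is where an error would most easily slip in.
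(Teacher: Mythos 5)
Your proposal is correct and follows exactly the strategy the paper indicates for this theorem (the appendix says only that ``the study of $\mathcal{G}_{t}^{(n)}$ detailed in the paper can be conducted in the same manner on $\mathcal{G}_{t}^{(n,j)}$''): you mirror Proposition~\ref{propcomp1} and Lemma~\ref{lem_couplingT} with the length-$j$ sub-ensemble and the offspring law $(j-1)\cdot\Pois(\cdot)$, and your accounting of the discrepancy events and the parameter $nt(\epsilon+1)^j\beta^{(j)}_{n,\epsilon}\to t$ is right. One tiny slip: you wrote $\Ed(\zeta^{(n,j)})=(j-1)t(\epsilon+1)^j\beta^{(j)}_{n,\epsilon}$, which is missing a factor of $n$ (it should be $(j-1)\,nt(\epsilon+1)^j\beta^{(j)}_{n,\epsilon}$), but the stated limit $(j-1)t$ makes clear this is a typo rather than an error.
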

We deduce the following joint limit theorem:
\begin{corol}
\label{coroltwocompj}
Let $x$ and $y$ be two distinct vertices of $\bar{K}_n$. 
For every $t>0$, $k, h\in\NN^*$, $\Pd(|C^{(n,j)}_{nt(\epsilon+1)^j}(x)|=k \et |C^{(n,j)}_{nt(\epsilon+1)^j}(y)|=h)$ converges to $\Pd(T^{(1,j)}_{t}=k)\Pd( T^{(1,j)}_{t}=h)$ as $n$ tends to $+\infty$. 
\end{corol}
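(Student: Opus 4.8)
The plan is to follow the proof of Corollary \ref{coroltwocomponents} verbatim, replacing the Poisson ensemble of all loops by the ensemble of loops of length $j$. Two structural facts are needed. First, the vertices of $\bar K_n$ play symmetric roles in $\mathcal{G}^{(n,j)}_t$. Second, for a subset $A\subset\KN$ of size $m$, the set of length-$j$ loops contained in $A$ before time $s$ is independent of the length-$j$ loops not contained in $A$ and has the law of the length-$j$ loop set of the model on $\bar K_m$ with unit conductances and killing parameter $\epsilon_{n,m}=\epsilon+\frac{n-m}{m}(\epsilon+1)$, again at time $s$: a based loop of length $j$ inside $A$ keeps its weight $\frac{1}{j(n(\epsilon+1))^j}$, and since $(\epsilon_{n,m}+1)\,m=n(\epsilon+1)$ this is exactly the weight of a based length-$j$ loop for the model on $\bar K_m$ with parameter $\epsilon_{n,m}$; in particular $\mu(\{\ell\subset A:\ x\in\ell,\ |\ell|=j\})=\frac{m^j-(m-1)^j}{j(n(\epsilon+1))^j}=\beta^{(j)}_{m,\epsilon_{n,m}}$.

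First I would handle the diagonal event: by exchangeability of the vertices, $\Pd\big(y\in C^{(n,j)}_{nt(\epsilon+1)^j}(x)\mid |C^{(n,j)}_{nt(\epsilon+1)^j}(x)|=k\big)=\tfrac{k-1}{n-1}$, so $\Pd\big(y\in C^{(n,j)}_{nt(\epsilon+1)^j}(x),\ |C^{(n,j)}_{nt(\epsilon+1)^j}(x)|=k\big)\to 0$ and this event contributes nothing to the limit. On the complementary event, I would condition on $C^{(n,j)}_{nt(\epsilon+1)^j}(x)$ and use the second structural fact with $A=\KN\setminus C^{(n,j)}_{nt(\epsilon+1)^j}(x)$, of size $m_n=n-k$: given $\{y\notin C^{(n,j)}_{nt(\epsilon+1)^j}(x),\ |C^{(n,j)}_{nt(\epsilon+1)^j}(x)|=k\}$, the size $|C^{(n,j)}_{nt(\epsilon+1)^j}(y)|$ is distributed as the component size of a vertex in $\mathcal{G}^{(m_n,j)}_{m_n\tilde t_n(\epsilon_{n,m_n}+1)^j}$, where $\epsilon_{n,m_n}=\epsilon+\tfrac{k}{n-k}(\epsilon+1)\to\epsilon$ and $\tilde t_n=t(1-k/n)^{j-1}\to t$, the latter being forced by $m_n\tilde t_n(\epsilon_{n,m_n}+1)^j=nt(\epsilon+1)^j$ together with $m_n(\epsilon_{n,m_n}+1)=n(\epsilon+1)$. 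Since Theorem \ref{sizecompj} holds unchanged when $(t,\epsilon)$ is replaced by converging sequences $(t_n,\epsilon_n)$, this conditional probability of being equal to $h$ converges to $\Pd(T^{(1,j)}_t=h)$.

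Finally I would combine the pieces: write $\Pd\big(|C^{(n,j)}_{nt(\epsilon+1)^j}(x)|=k,\ |C^{(n,j)}_{nt(\epsilon+1)^j}(y)|=h\big)$ as the conditional probability above times $\big(1-\tfrac{k-1}{n-1}\big)\Pd\big(|C^{(n,j)}_{nt(\epsilon+1)^j}(x)|=k\big)$, plus the diagonal contribution $\Pd\big(y\in C^{(n,j)}_{nt(\epsilon+1)^j}(x),\ |C^{(n,j)}_{nt(\epsilon+1)^j}(x)|=k,\ |C^{(n,j)}_{nt(\epsilon+1)^j}(y)|=h\big)$, which vanishes; using Theorem \ref{sizecompj} once more for the single component of $x$, the right-hand side converges to $\Pd(T^{(1,j)}_t=k)\,\Pd(T^{(1,j)}_t=h)$. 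The only step that is not routine bookkeeping is the identification in the middle paragraph — that deleting $k$ vertices turns the length-$j$ model on $\bar K_n$ at time $nt(\epsilon+1)^j$ into the length-$j$ model on $\bar K_{n-k}$ at time $m_n\tilde t_n(\epsilon_{n,m_n}+1)^j$ with $\tilde t_n=t(1-k/n)^{j-1}$; once that is checked, everything else has already been carried out for $\mathcal{G}^{(n)}_{\cdot}$ in the proof of Corollary \ref{coroltwocomponents}.
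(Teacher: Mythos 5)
Your proof is correct and follows exactly the route the paper uses for Corollary \ref{coroltwocomponents}, which is what the paper implicitly invokes when it states that the joint limit theorem is ``deduced'' from Theorem \ref{sizecompj}. The explicit verification that $m_n\tilde t_n(\epsilon_{n,m_n}+1)^j = nt(\epsilon+1)^j$ with $\tilde t_n = t(1-k/n)^{j-1}\to t$ is the one computation that genuinely needs checking in the fixed-length setting, and you carried it out correctly.
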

\paragraph{Proof of Proposition \ref{prop:hydrodynj}.}
\begin{enumerate} \item Let $k$ be a positive integer. The average number of components of size $k$ in the random graph  $\mathcal{G}_{tn(\epsilon+1)^j}^{(n,j)}$ is $\displaystyle{\rho^{(n,j)}_{\epsilon,t}(k)=\frac{1}{nk}|\{x\in \KN,\ |C^{(n,j)}_{nt(\epsilon+1)^j}(x)| = k\}|}$.\\ Set $\rho^{(j)}_{t}(k)=\frac{1}{k}\Pd(T^{(1,j)}_t=k)$. By Theorem \ref{sizecompj} and Corollary \ref{coroltwocompj}, the first two moments of $\rho^{(n,j)}_{\epsilon,t}(k)$ converge to $\rho^{(j)}_{t}(k)$ and $(\rho^{(j)}_{t}(k))^2$ respectively.  Therefore, $(\rho^{(n,j)}_{\epsilon,t}(k))_n$ converges to $\rho^{(j)}_{t}(k)$ in $L^2$ as $n$ tends to $+\infty$. 
 \item 
To complete the proof of Proposition \ref{prop:hydrodynj}, we compute the distribution of $T^{(u,j)}_{t}$ for $u\in\NN^*$, using Dwass's Theorem: 
$$\left\{\begin{array}{ll}
 \Pd(T^{(u,j)}_t=u+(j-1)k)=\frac{u}{k!}(u+(j-1)k)^{k-1}t^{k}e^{-(u+(j-1)k)t}&\quad \forall k\in\NN \\
 \Pd(T^{(u,j)}_t=h)=0 &\text{ if } h-u\not\in (j-1)\NN.
 \end{array}\right.
$$
As $1+(j-1)k=j+(j-1)(k-1)$, $$\frac{d}{dt}\rho^{(j)}_t(1+(j-1)k)=\frac{1}{j}P(T^{(j,j)}_t=1+(j-1)k)-\Pd(T^{(1,j)}_t=1+(j-1)k)$$
Therefore, $(\rho^{(j)}_t(k))_{t\geq 0}$ is solution of equation \eqref{coageqj} for every $k\in\NN^*$.
\end{enumerate}
%

%
%
\end{document}